\theoremstyle{remark}
\newtheorem{example}{\textbf{Example}}[section]
\numberwithin{equation}{section}
\def\tr{\textcolor{red}}
\def\tb{\textcolor{blue}}
\newcommand\figcaption{\def\@captype{figure}\caption}
\newcommand\tabcaption{\def\@captype{table}\caption}
\def\bq{\begin{equation}}
\def\eq{\end{equation}}
\def\bqs{\begin{equation*}}
\def\eqs{\end{equation*}}
\def\ba{\begin{aligned}}
\def\ea{\end{aligned}}
\def\br{\begin{eqnarray}}
\def\er{\end{eqnarray}}
\def\brr{\bq\begin{array}{rlll}}
\def\err{\end{array}\eq}
\def\text#1{\hbox{#1}}
\newtheorem{thm}{Theorem}[section]
\newtheorem{lem}{Lemma}[section]
\newtheorem{rem}{Remark}[section]
\newcommand{\bsub}{\begin{subequations}}
\newcommand{\esub}{\end{subequations}$\!$}
\title[]{A mixed discontinuous Galerkin method without interior penalty for time-dependent fourth order problems}
\author[H.~Liu, P. ~Yin]{Hailiang Liu and Peimeng Yin}
\email{hliu@iastate.edu; pemyin@iastate.edu}
\address{ Iowa State University, Mathematics Department, Ames, IA 50011}
\keywords{Biharmonic equation, discontinuous Galerkin method, $L^2$ error estimates, Swift-Hohenberg equation}
\subjclass{65N15, 65N30, 35J40}
\date{December  28, 2017}
\begin{document}

\begin{abstract}
A novel discontinuous Galerkin (DG) method is developed to solve time-dependent bi-harmonic type equations involving fourth derivatives in one and multiple space dimensions.  We present the spatial DG discretization based on a mixed formulation and central interface numerical fluxes so that the resulting semi-discrete schemes are $L^2$ stable even without interior penalty.  %We first rewrite the equation into a second order system, and then spatially discrete the weak formulation of the system by DG scheme without penalty terms.
For time discretization, we use Crank-Nicolson so that the resulting scheme is unconditionally stable and second order in time. % The stability is naturally obtained due to the cancellation of the bilinear functional in the system when taking summation.
We present the optimal $L^2$ error estimate of $O(h^{k+1})$ for polynomials of degree $k$ for semi-discrete DG schemes, and the $L^2$ error of $O(h^{k+1} +(\Delta t)^2)$ for fully discrete DG schemes.
 % by introducing a global projection in one-dimension or its tensor product in multi-dimension.
Extensions to more general fourth order partial differential equations and cases with non-homogeneous boundary conditions are provided. Numerical results are presented to verify the stability and accuracy of the schemes. Finally, an application to the one-dimensional Swift-Hohenberg equation endowed with a decay free energy is presented.
%and further explore the potential of the DG method to nonlinear problems.
\end{abstract}

\maketitle

\bigskip

\section{Introduction}
In this paper, we are interested in discontinuous Galerkin approximations to the fourth order partial differential equations (PDEs) of the form
\begin{subequations}\label{Beam}
\begin{align}
u_t & = Lu \quad  x\in \Omega \subset \mathbb{R}^d, \quad t>0,\\
u(x, 0) & =u_0(x), \; x\in \Omega,
\end{align}
\end{subequations}
where $L=\sum_{m=0}^2a_m \Delta^m$ is a linear differential operator of fourth order and $a_m (m=0,1,2)$ are constants with $a_2<0$, $\Omega$ is a bounded rectangular domain in $\mathbb{R}^d$, $u_0(x)$ is a given function.  Our analysis is presented mostly for periodic boundary conditions,  extensions to other non-homogeneous boundary conditions will then follow. The model could include a lower order term such as $f(u,x, t)$, without additional difficulty.
%dealt with using proper boundary fluxes.

The fourth order PDEs appear often in physical and engineering applications, such as the modeling of the thin beams and plates, strain gradient elasticity, %\cite{EGHLMT02},
thermal convection, and phase separation in binary mixtures.
%the theory of phase transitions
 %\cite{DS88}.
 The special cases of (\ref{Beam}) include the linear time-dependent biharmonic equation with
\bqs%\label{BH}
L=-\Delta^2,
\eqs
and the linearized Cahn-Hilliard equation
\bqs%\label{CH}
L=-\Delta^2 -\Delta.
\eqs
%Generally, we can separate the numerical method of solving the time-dependent or steady state of the fourth-order equation (\ref{Beam}) into three categories.  The first type is to design the numerical method directly from the original equation (\ref{Beam}), see \cite{F13, CS08}; the second type is based on the so-called mixed formulation that is to rewrite (\ref{Beam}) into two coupled second-order equations, such as \cite{L06}; the last type is to rewrite (\ref{Beam}) into a system of first-order equations, and this is mainly for LDG method and related work can be found in \cite{YS02, DS09, MSW12}.
In the literature, various numerical methods have been developed to discretize fourth order partial differential equations, such as  mixed finite element methods (see e.g. \cite{CR74, BR78, Fa78, GP79, BOP80, Mo87}), and finite difference methods (see e.g. \cite{GKO95}). In this paper we will discuss discontinuous Galerkin methods, using a discontinuous Galerkin finite element approximation in the spatial variables coupled with a proper time discretization. It is well known that for equations containing higher order spatial derivatives, discontinuous Galerkin discretization cannot be directly applied. This is because the solution space, which consists of piecewise polynomials discontinuous at the element interfaces, is not regular enough to handle higher derivatives. This is a typical non-conforming case in finite elements.

%\cite{BR78,  S78, FO80, BOP80, Mo87, L06},

One approach to resolve such difficulty is the local discontinuous Galerkin (LDG)  method (see e.g.,  \cite{YS02, DS09, MSW12,  WZS17} for fourth order problems). The idea is to suitably rewrite the higher order equation into a first order system and then discretize it by the DG method \cite{CS98}.
%the discontinuous Galerkin method on the system.
 The local  numerical fluxes without interior penalty can be designed to guarantee stability. %  and local solvability of all the auxiliary variables introduced to approximate the derivatives of the solution to the steady problem.
The LDG method has been successful in handling equations with high-order derivatives, since it was first developed by Cockburn and Shu \cite{CS98} for the second order convection diffusion equation. However, these schemes increase the number of unknowns in numerical solutions.
%One obvious drawback of the LDG method when applied to the fourth order problems is a widened stencil, which results in additional complexity and computational cost in solving all auxiliary variables.

Another approach is to weakly impose the inter-element continuity conditions using interior penalties.  In the context of finite element framework,  $C^1$ conforming finite element methods for the biharmonic equation is known computationally intensive due to the imposition of $C^1$-continuity across the  element interfaces, several non-conforming approaches such as  $C^0$-interior penalty methods \cite{BS05, EGHLMT02}  and interior penalty methods \cite{Ba77, MS03, MSB07, SM07, GV15} have been proposed. These approaches use either continuous or discontinuous finite element solution spaces in which continuity conditions are weakly enforced through interior penalties.  A related strategy is the direct DG discretization based on numerical fluxes  which penalize jumps of derivatives when crossing element interfaces \cite{CS08}. For DG schemes with interior penalties, the practical choice of penalty parameters is often a subtle matter.

In this work we reformulate the fourth order PDEs into a second order coupled system and discretize the system by a DG method without interior penalty. In the case $L=-\Delta^2$, such reformulation
\begin{equation}\label{BeamSys}
\left \{
\begin{array}{rl}
u_t = & \Delta q, \\
q = & -\Delta u,
\end{array}
\right.
\end{equation}
 is the usual mixed formulation \cite{C78}, which
 %has been extensively used  to design mixed finite element methods.
 has been used to design the mixed DG methods with interior penalties  in \cite{GNP08, XBLM17} for solving the biharmonic equation.  Our DG method derives from a direct DG discretization of the mixed formulation (\ref{BeamSys}). Instead of the standard DG ansatz analogous to the discretization of diffusion,  the simplest form for numerical fluxes is used:  the arithmetic mean of the solution gradient and the arithmetic mean of the solution. The resulting scheme is the most simple variant to date for the discretization of second order terms,  i.e.,  without any interior penalty. This is in sharp contrast to the DDG methods introduced in \cite{LY09, LY10} for diffusion, where interface corrections are included to penalize jumps of both the numerical solution  and its second order derivatives. With formulation (\ref{BeamSys}),  stability of the resulting DG scheme is naturally ensured due to the symmetric nature of the underlying bilinear operator.
 %without reference to any  interior penalty. %In addition, the scheme is generic in the sense that it is independent of the underlying form of the derivative terms, as only arithmetic means are used at the interfaces.
 It is also parameter free, i.e. no particular choice of any penalty constant is necessary. This makes the scheme simple to implement for generic linear and non-linear problems.

%The theory of a priori error estimates for various mixed finite element methods has been well-developed \cite{BR78,  S78, FO80, BOP80, Mo87, L06}, using certain interpolation or specially paired solution-test function spaces.
It is known that for DG methods stability itself does not necessarily imply the optimal convergence. Obtaining optimal error estimates for DG methods has been a major subject of research. The a priori error estimate results for DG methods with interior penalties have been reported in \cite{MS03, MSB07, SM07, GNP08, CS08, XBLM17} for biharmonic type equations, in these works penalty parameters play a special role in both the stability analysis and the error estimates.
%For steady problems, the corresponding DG schemes produce efficient algorithms with fewer unknowns, generating systems of equations with positive-definite matrices. %this is in contrast to the error estimates in \cite{DS09}  for the LDG method to linear biharmonic type equations.

The main quest in this article is whether optimal convergence can still be achieved without interior penalty.  We carry out the optimal $L^2$ error estimates for both semi-discrete and fully-discrete schemes with periodic boundary conditions, in  both one and multi-dimensions.  The crucial ingredient in the one-dimensional error analysis is a global projection $P$ defined by $A(v-Pv,\phi)=0$ for any test function $\phi$ in the finite element space, and the corresponding projection error. Here $A(\cdot, \cdot)$ is the bilinear operator obtained by the penalty-free DG discretization of the operator $-\partial^2_x $.  In multi-dimensional case, we use the tensor product polynomials of degree at most $k$, and make use of the projection error  obtained in \cite{L15} and the bilinear form estimate $|A(v-Pv,\phi)| \leq Ch^{k+2}|v|_{k+2}\|\phi\|$ obtained in \cite{LHLY17}.  A related work is \cite{BEMS07}, in which the authors
use the inf-sup strategy to prove the optimal $L^2$ convergence rates for the symmetric DG method without interior penalty using $P^k(k \geq 2)$ polynomials for one dimensional second order elliptic problems.

Extension to more general equations of form (\ref{Beam}) is carried out by rewriting $L$ as  $L=- {\mathcal L}^2 +M$, where $M=a_0-\frac{a_1^2}{4a_2}$ and $\mathcal L=\sqrt{-a_2} \left( \Delta +\frac{a_1}{2a_2} \right)$ is a second order  operator, and the optimal $L^2$ error estimate can also be obtained.  For three typical non-homogeneous  boundary conditions we present DG schemes with boundary corrections. Boundary penalty is needed in some cases to weakly enforce the given boundary data, as usually done for the weak formulation of elliptic problems \cite{L68}.  In fact, imposing boundary conditions only weakly is one of the main advantages of the DG methods to  boundary-value problems for higher order PDEs such as (\ref{Beam}a).

% In fact, the use of a penalty formulation for enforcing the boundary conditions can be traced back to the late 1960s, see Lions \cite{L68}, in which the Dirichlet boundary condition was enforced in the weak formulation of elliptic problems.

The rest of the paper is organized as follows: in section 2, we describe the mixed DG methods in one dimension and present the optimal error estimates for both semi-discrete and fully discrete schemes to time-dependent biharmonic problems. In section 3, we formulate the DG scheme in multi-dimensions along with its stability and optimal error estimates using tensor product polynomials.  In section 4, we extend the DG schemes to more general fourth order time-dependent PDEs,  cases with non-periodic boundary conditions, and the one-dimensional Swift-Hohenberg equation -- a nonlinear problem with a decay free energy \cite{SH77}.  Several numerical results are presented in section 5 to verify  the stability and accuracy of the schemes. Finally, we give  concluding remarks in section 6 to summarize results in this paper and indicating future work.

\section{The DG scheme in one dimension}
In this section we consider the one dimensional time-dependent fourth order equation (\ref{Beam}), i.e.,
\begin{equation}\label{Beam1D}
u_t= - u_{xxxx} \quad x\in [a,b], \;  t>0,
\end{equation}
subject to initial data $u(x, 0)=u_0(x)$, and periodic boundary conditions.

We partition the interval $[a,b]$ into computational cells $I_j=[x_{j-1/2},x_{j+1/2}]$, with $x_{1/2}=a$ and $x_{N+1/2}=b$, and mesh size $h_j=x_{j+1/2}-x_{j-1/2}$, with $h=\max_{1\leq j \leq N}  h_j$.   And we define the finite element space
$$
V_h^k=\{ v \in L^2([a, b])\; : \; v |_{I_j} \in P^k(I_j), \; j=1,2, \cdots, N \},
$$
where $P^k(I_j)$ denotes the set of all polynomials of degree at most $k$ on $I_j$.   At cell interfaces  $x=x_{j+1/2}$ we use the notation
$$
v^\pm=\lim_{\epsilon \to 0}v(x \pm \epsilon), \quad \{v\}=\frac{v^-+v^+}{2}, \quad [v]=v^+-v^-.
$$
Based on its mixed  formulation,
\begin{equation}\label{BeamSys1D}
%\left \{
%\begin{array}{rl}
    u_t =  q_{xx}, \;
    q = -u_{xx},
%\end{array}
%\right.
\end{equation}
the DG scheme for (\ref{Beam1D}) is to find $(u_h, q_h) \in V_h^k \times V_h^k$ such that for all $\phi, \ \psi \in V_h^k$
and  $j =1, 2 \cdots, N$,
\begin{subequations}\label{FPDGCell}
\begin{align}
    \int_{I_j} u_{ht} \phi dx = &  - \int_{I_j} q_{hx} \phi_x dx+ \widehat{(q_{hx})} \phi |_{\partial I_j} + (q_h - \widehat{q_h})\phi_x|_{\partial I_j}, \\
   %   & + \alpha\left( - \int_{I_j} u_x \phi_x dx+ \widehat{(u_x)} \phi |_{\partial I_j} + (u - \widehat{u})\phi_x|_{\partial I_j} \right) +\int_{I_j} f \phi dx,\\
    \int_{I_j} q_h \psi dx = & \int_{I_j} u_{hx} \psi_x dx - \widehat{(u_{hx})} \psi |_{\partial I_j}
     -(u_h-\widehat{u_h})\psi_x|_{\partial I_j},
\end{align}
\end{subequations}
where the notation $v|_{\partial I_j}=v^-_{j+1/2}-v^+_{j-1/2}$ is used,  and on each cell interface $x_{j+1/2}, j=0, 1, 2, \cdots N$, the numerical fluxes are given by
\begin{equation}\label{DDGFlux}
\begin{aligned}
&    \widehat{q_{hx}} =  \{q_{hx} \},\quad  \widehat{q_h} = \{q_h \},\\
&    \widehat{u_{hx}} =  \{u_{hx} \},  \quad   \widehat{u_h} = \{u_h \},
\end{aligned}
\end{equation}
where $\{v\}_{1/2}=\{v\}_{N+1/2}$ is understood as $\frac{1}{2}(v_{1/2}^+ + v_{N+1/2}^-)$ for $v=u_h, q_h, u_{hx}$ and $q_{hx}$.
%Note that at $x=\{a, b\}$ periodic boundary conditions are called in use.
The initial data for $u_h$ is taken as the piecewise $L^2$ projection of $u_0(x)$, that is, $u_h(x, 0)\in V_h^k$ such that
\bq\label{LocL2Proj}
\int_{I_j} (u_0(x)-u_h(x,0))\phi(x)dx=0, \quad \forall \phi\in P^k(I_j),  \quad j=1, \cdots, N.
\eq
Note that  $q_h(x,0) \in V_h^k$ can be obtained from $u_h(x, 0)$ by solving (\ref{FPDGCell}b).

\subsection{Stability and $L^2$ error estimate } We proceed to verify the $L^2$ stability of the above semi-discrete DG scheme and further obtain the optimal $L^2$ error estimate. To this end, we sum
(\ref{FPDGCell}) over $ j=1, \cdots, N$ to obtain
\begin{subequations}\label{FPDG1D}
\begin{align}
(u_{ht}, \phi) = &- A(q_h,\phi),\\
(q_h, \psi) = & A(u_h,\psi),
\end{align}
\end{subequations}
where $(\cdot, \cdot)$ denotes the inner product of two functions over $[a, b]$, and the bilinear functional
\bq\label{bilinear}
A(w,v)=\sum_{j=1}^N \int_{I_j} w_x v_x dx + \sum_{j=1}^N \left(\{w_x\} [v] + [w]\{v_x\} \right)_{j+1/2},
\eq
where by $(\cdot)_{j+1/2}$ we mean evaluation of involved quantities at $x_{j+1/2}$.  Note that $A(\cdot, \cdot)$  is symmetric, that is,
\begin{equation}\label{sym}
A(w,v)=A(v,w).
\end{equation}
For scheme (\ref{FPDG1D}) with (\ref{bilinear}) the following stability result holds.
\begin{thm}\label{SemiStab} ($L^2$-Stability). The numerical solution $u_h$  satisfies
\bq \label{2.8}
\frac{1}{2}\frac{d}{dt} \int_a^b u_h^2dx=-\int_a^b q_h^2dx \leq 0.
\eq
\end{thm}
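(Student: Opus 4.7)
The plan is the standard energy argument, exploiting the symmetry of $A(\cdot,\cdot)$ stated in (\ref{sym}). I would simply choose the admissible test functions $\phi = u_h$ in (\ref{FPDG1D}a) and $\psi = q_h$ in (\ref{FPDG1D}b), both of which lie in $V_h^k$, and then combine the two resulting identities. No projection, no Gronwall, no interior penalty term needs to be controlled — this is purely algebraic once we have (\ref{sym}) at hand.

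More concretely, first I would substitute $\phi = u_h$ into (\ref{FPDG1D}a) to obtain
\[
(u_{ht}, u_h) \;=\; -A(q_h, u_h),
\]
and rewrite the left-hand side as $\tfrac{1}{2}\tfrac{d}{dt}\int_a^b u_h^2\,dx$. Next I would substitute $\psi = q_h$ into (\ref{FPDG1D}b) to obtain
\[
(q_h, q_h) \;=\; A(u_h, q_h),
\]
so that $\int_a^b q_h^2\,dx = A(u_h, q_h)$. Invoking the symmetry $A(u_h, q_h) = A(q_h, u_h)$ from (\ref{sym}), I can equate the right-hand sides and arrive at
\[
\tfrac{1}{2}\tfrac{d}{dt}\int_a^b u_h^2\,dx \;=\; -A(q_h, u_h) \;=\; -A(u_h, q_h) \;=\; -\int_a^b q_h^2\,dx,
\]
which is exactly (\ref{2.8}), and non-positivity is immediate.

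There is essentially no obstacle here — the entire argument hinges on (i) the fact that $u_h$ and $q_h$ themselves are legal test functions (obvious, since the scheme is posed over $V_h^k \times V_h^k$ and test functions are drawn from the same space), and (ii) the symmetry of $A$. The symmetry itself is visible from (\ref{bilinear}): the volume term $\sum_j \int_{I_j} w_x v_x\,dx$ is manifestly symmetric in $w,v$, while the interface term $\{w_x\}[v] + [w]\{v_x\}$ is symmetric by construction because the central numerical fluxes in (\ref{DDGFlux}) were chosen precisely to produce this symmetric pairing of jumps and averages. So the absence of interior penalty is not an issue: symmetry alone, not coercivity, is what drives the stability identity. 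If a quick verification of (\ref{sym}) were expected in the proof, I would just inspect (\ref{bilinear}) term by term and note the invariance under swapping $w \leftrightarrow v$.
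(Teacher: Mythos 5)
Your proposal is correct and follows exactly the paper's own argument: take $\phi=u_h$ in (\ref{FPDG1D}a) and $\psi=q_h$ in (\ref{FPDG1D}b), then use the symmetry (\ref{sym}) to identify $-A(q_h,u_h)$ with $-\|q_h\|^2$. Nothing is missing.
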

\begin{proof}
Taking $\phi = u_h$  in  (\ref{FPDG1D}a),  and $\psi = q_h$ in (\ref{FPDG1D}b) respectively, we obtain
%\begin{subequations*}\label{StabSub}
\begin{align*}
\frac{1}{2} \frac{d}{dt} \int_a^b u_h^2  dx  =  -A(q_h, u_h),\;
\|q_h\|^2 =A(u_h, q_h),
\end{align*}
%\end{subequations}
\iffalse
Plugging (\ref{StabSub}b) and (\ref{StabSub}c) into (\ref{StabSub}a), it follows that
\bq
\ba
\frac{1}{2} \frac{d}{dt} \int_I u^2  dx = & -\|q\|^2 - \alpha(q,u) + (f, u) \\
\leq & -\|q\|^2 + | \alpha | \|q\|\|u\| + \|f\|\|u\| \\
\leq & \frac{\alpha^2}{4} \|u\|^2 + \|f\|\|u\|,
\ea
\eq
which gives
\bq\label{StabDE}
\frac{d}{dt}\|u\| \leq \frac{\alpha^2}{4} \|u\| + \|f\|.
\eq
Solving (\ref{StabDE}), then it follows
\bq
\|u\| \leq e^{\frac{\alpha^2}{4}t} \left( \|U_0\| + \int_0^t e^{-\frac{\alpha^2}{4}t}\|f\| dt  \right),
\eq
\fi
which when using (\ref{sym}) implies (\ref{2.8}).
\end{proof}

In order to estimate the $L^2$ error, we introduce a global projection: % which will be used in the error estimate.
for a given piecewise smooth function  $w \in L^2([a,b]), w|_{I_j} \in H^{s+1}(I_j), s\geq k \geq 1$, we define $P w \in V_h^k$ by
\begin{subequations}\label{pbproj}
\begin{align}
    & \int_{I_j} \left( P w (x)-w(x) \right)v(x)dx=0, \quad \forall v \in P^{k-2}(I_j),\\
    & \{ (P w)_x\}_{j+1/2}= \{w_x\}_{j+1/2},\\
    & \{P w\}_{j+1/2} := \{w\}_{j+1/2},
\end{align}
\end{subequations}
for $j=1, \cdots, N$, where $\{v\}_{N+1/2}$ is understood as $\frac{1}{2}(v_{1/2}^+ + v_{N+1/2}^-)$. Note that for $k=1$, (\ref{pbproj}a)
is redundant.

\begin{lem}\label{projexist} For $k = 1$ with  $N$ odd, or any $k\geq 2$, there exists a unique projection $P$ defined by (\ref{pbproj}). Moreover,
\begin{equation}\label{AA}
A(Pw-w, v)=0\quad \forall v\in V_h^k.
\end{equation}
\end{lem}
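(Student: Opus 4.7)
The plan is to reduce existence to uniqueness by a dimension count, then prove (\ref{AA}) directly from (\ref{pbproj}), and finally use this identity to settle uniqueness. Since $\dim V_h^k=N(k+1)$ and (\ref{pbproj}a)--(\ref{pbproj}c) supply $N(k-1)+N+N=N(k+1)$ linear conditions on $Pw$ (with (\ref{pbproj}a) vacuous when $k=1$), the linear map $w\mapsto Pw$ is well defined if and only if the associated homogeneous system is injective.

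Next I would prove (\ref{AA}), which is also the engine of the uniqueness argument. For any $v\in V_h^k$, integration by parts on each cell gives
\[
\int_{I_j}(Pw-w)_x\, v_x\,dx = \bigl[(Pw-w)\,v_x\bigr]_{x_{j-1/2}^+}^{x_{j+1/2}^-} - \int_{I_j}(Pw-w)\,v_{xx}\,dx.
\]
The cell integral vanishes in both regimes: for $k=1$ one has $v_{xx}\equiv 0$, while for $k\geq 2$, $v_{xx}\in P^{k-2}(I_j)$, so (\ref{pbproj}a) applies. Summing over $j$ under periodic boundary conditions telescopes the boundary contributions into $-\sum_j[(Pw-w)\,v_x]_{j+1/2}$, and the algebraic identity $[fg]=\{f\}[g]+[f]\{g\}$ turns this into $-\sum_j\bigl(\{Pw-w\}[v_x]+[Pw-w]\{v_x\}\bigr)_{j+1/2}$. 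The first piece dies by (\ref{pbproj}c); adding the surviving piece to the two interface sums inside the definition (\ref{bilinear}) of $A(Pw-w,v)$ leaves only $\sum_j\{(Pw-w)_x\}_{j+1/2}[v]_{j+1/2}$, which vanishes by (\ref{pbproj}b). This gives (\ref{AA}).

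For uniqueness, set $w=0$ and let $u:=Pw$, which satisfies (\ref{pbproj}) with zero right-hand sides. The computation just performed still applies and yields $A(u,v)=0$ for every $v\in V_h^k$. Taking $v=u$ and invoking (\ref{pbproj}b) to kill the two interface terms in (\ref{bilinear}) leaves $\sum_j\int_{I_j}(u_x)^2\,dx=0$, so $u$ is piecewise constant. For $k\geq 2$, testing (\ref{pbproj}a) against $1\in P^{k-2}(I_j)$ forces $u\equiv 0$ on every cell. For $k=1$ that condition is empty, but (\ref{pbproj}c) reads $u|_{I_{j+1}}=-u|_{I_j}$, and the periodic identification then yields $(1-(-1)^N)\,u|_{I_1}=0$, which collapses to $u\equiv 0$ exactly when $N$ is odd, matching the stated hypothesis.

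The main obstacle is the careful interface bookkeeping in the integration-by-parts step: one must pick the correct orientation so that the telescoping is clean under periodicity and then apply the jump--average identity in the direction that exactly cancels the terms inherited from (\ref{bilinear}). Once that identity is in hand, the dimension count reduces existence to uniqueness, and the $L^2$-orthogonality plus a one-line recurrence handle both the $k\geq 2$ and the $k=1$ (odd $N$) cases.
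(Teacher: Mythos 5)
Your proof is correct, and for the existence--uniqueness half it takes a genuinely different route from the paper. The paper simply cites a more general result (Lemma~2.1 of \cite{LHLY17}) for the well-definedness of $P$, whereas you make the lemma self-contained: the count $N(k-1)+N+N=N(k+1)=\dim V_h^k$ reduces existence to injectivity of the homogeneous system, and injectivity follows from the identity $A(u,u)=\sum_j\int_{I_j}(u_x)^2\,dx$ (valid once $\{u_x\}=0$ at all interfaces), which forces $u$ to be piecewise constant, after which $L^2$-orthogonality to constants kills $u$ for $k\ge 2$ and the sign-flip recurrence $c_{j+1}=-c_j$ from $\{u\}=0$ kills it for $k=1$ precisely when $N$ is odd. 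This makes transparent \emph{why} the parity of $N$ enters for $k=1$ (an alternating piecewise-constant kernel survives when $N$ is even), something the citation hides; the cost is that your argument is specific to this one-dimensional setting, while the cited lemma covers a more general situation. It is also a nice economy that your uniqueness argument reuses the same integration-by-parts computation that establishes \eqref{AA}. For the identity \eqref{AA} itself, your derivation --- cell-wise integration by parts, telescoping under periodicity, the product rule $[fg]=\{f\}[g]+[f]\{g\}$, cancellation of the $[\epsilon]\{v_x\}$ terms, and then \eqref{pbproj}b,c --- is exactly the ``integration by parts once'' the paper alludes to, so that half coincides with the paper's approach.
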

\begin{proof}
(i)  From the more general result  in \cite[Lemma 2.1]{LHLY17} it follows that such $P$ is uniquely defined.\\
(ii)  Relation (\ref{AA}) can be derived from (\ref{bilinear}) using (\ref{pbproj}) and integration by parts once.

\end{proof}

Before going further we recall the following approximation result for projection $P$.
\begin{lem}\cite{L15}\label{ProjErr} (Projection error). Assume that $w\in H^m$ with $m \geq k+1$. Then we have the following projection error
\bq\label{ProjErr1D}
\|w-Pw\| \leq C|w|_{k+1}h^{k+1},
\eq
where $C$ is independent of $h$. Moreover,
$$ %\bq \label{pvv}
Pv=v, \quad \forall v \in V_h^k.
$$ %\eq
\end{lem}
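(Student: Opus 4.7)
The plan is to establish the two assertions in sequence. First, for the polynomial reproduction property $Pv = v$ for all $v \in V_h^k$: any $v \in V_h^k$ is piecewise polynomial of degree at most $k$, and it trivially satisfies all three defining conditions (\ref{pbproj}a)--(\ref{pbproj}c) when $w$ is replaced by $v$. Since Lemma \ref{projexist} guarantees uniqueness of the projection, we must have $Pv = v$.

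For the error estimate, I would work on a generic cell $I_j$ and expand in scaled Legendre polynomials $\phi_i(\xi)$ on the reference interval $\hat I = [-1,1]$ via $\xi = 2(x - x_j)/h_j$. Writing $Pw|_{I_j}(x) = \sum_{i=0}^{k} a_i^j \phi_i(\xi)$, condition (\ref{pbproj}a) together with orthogonality of Legendre polynomials determines the coefficients $a_0^j, \dots, a_{k-2}^j$ locally from the moments of $w$, matching the coefficients of the $L^2$-projection onto $P^{k-2}(I_j)$. The two remaining coefficients $a_{k-1}^j$ and $a_k^j$ are coupled across neighboring cells through the interface conditions (\ref{pbproj}b)--(\ref{pbproj}c), giving a global $2N \times 2N$ block-circulant linear system.

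The key quantitative step is to show that the inverse of this coupling matrix is bounded independently of $N$ (and therefore of $h$). This is essentially the quantitative refinement of the well-posedness argument in Lemma \ref{projexist}: the invertibility there relied on $\det(E \pm F) \neq 0$, and a spectral analysis of the block-circulant matrix (whose symbol is a $2\times 2$ matrix-valued trigonometric polynomial in the dual index) shows the inverse is uniformly bounded. Substituting the smooth target $w$ into the right-hand side and using standard one-dimensional approximation on the reference interval, one obtains $|a_i^j - \bar a_i^j| \le C h^{k+1-i} |w|_{H^{k+1}(\tilde I_j)}$ for the leading two coefficients, where $\bar a_i^j$ are the Legendre coefficients of $w$ itself and $\tilde I_j$ is a bounded union of neighboring cells. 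Combining the local $L^2$-projection error bound for the lower moments with this boundary-corrected bound for the top two moments, and a Bramble--Hilbert scaling argument, yields the cell-wise estimate $\|w - Pw\|_{L^2(I_j)} \le C h^{k+1} |w|_{H^{k+1}(\tilde I_j)}$; squaring and summing over $j$ produces (\ref{ProjErr1D}).

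The main obstacle is controlling the global coupling rigorously: one must show that the block-circulant coupling matrix has spectrum bounded away from zero uniformly in the number of cells, so that local perturbations in the interface data propagate without amplification. Once this uniform invertibility is in hand, the remaining pieces (Legendre orthogonality, scaling, Bramble--Hilbert on $\hat I$) are standard.
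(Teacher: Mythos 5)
The paper does not prove this lemma at all: it is quoted verbatim from \cite{L15}, so there is no in-paper argument to compare against. Your proposal is, in effect, a reconstruction of the proof in that reference, and its overall architecture is the right one. The first assertion is handled completely and correctly: any $v\in V_h^k$ satisfies (\ref{pbproj}a)--(\ref{pbproj}c) with $w=v$, and uniqueness from Lemma \ref{projexist} forces $Pv=v$. The structure of the error estimate is also the standard one: Legendre expansion on each cell, the conditions (\ref{pbproj}a) pinning down the coefficients of degree $\le k-2$ as the local Legendre moments of $w$, and the two interface conditions (\ref{pbproj}b)--(\ref{pbproj}c) coupling the top two coefficients of neighboring cells into a $2N\times 2N$ block-circulant system with blocks $E$ and $F$.

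The one place where your write-up stops short of a proof is exactly the step you flag yourself: the uniform (in $N$, hence in $h$) bound on the inverse of the coupling matrix. Stating this as "the main obstacle" is honest, but it is the entire analytic content of the lemma; without it the cell-wise bound $\|w-Pw\|_{L^2(I_j)}\le Ch^{k+1}|w|_{H^{k+1}(\tilde I_j)}$ with a constant independent of $j$ and $N$ does not follow. The way to close it is to observe that $E^{-1}F$ has determinant $1$ and trace $2(-1)^{k-1}k$, so for $k\ge 2$ its eigenvalues are $\mu$ and $\mu^{-1}$ with $|\mu|=k+\sqrt{k^2-1}>1$; this spectral gap gives geometric decay of the entries of the inverse of the block-circulant matrix away from the diagonal and hence a uniform operator bound, which is what permits the localized estimate over a bounded patch $\tilde I_j$ of neighbors. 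Note that for $k=1$ the matrix $E^{-1}F$ is a Jordan block with double eigenvalue $1$, so this argument degenerates (consistent with the restriction to $N$ odd in Lemma \ref{projexist}), and a separate treatment is needed there. Also, a Bramble--Hilbert argument alone only yields the local interpolation rates; you still need the reproduction property $Pv=v$ on $P^k$ (or $P^{k}$ locally) to invoke it, so the two halves of the lemma are not independent in the way your ordering suggests. As a blind attempt the roadmap is faithful to \cite{L15}, but as written it is an outline rather than a proof.
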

\begin{thm}\label{ThmL2Err} Let $u_h$ be the numerical solution to (\ref{FPDGCell}) with (\ref{DDGFlux}), and $u$ be the smooth solution to problem (\ref{Beam1D}),  then
\bq\label{pbddgenorm}
\|  u_h(\cdot, t)-u(\cdot, t) \| \leq C h^{k+1}, \quad 0\leq t\leq T,
\eq
where $C$ depends on $\sup_{t \in [0,T]}|u_t(\cdot,t)|_{k+1}$, $\sup_{t \in [0,T]}|u(\cdot,t)|_{k+3}$ and  linearly on $T$,  but independent of $h$.
\end{thm}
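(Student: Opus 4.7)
The plan is a standard Galerkin-orthogonality-plus-energy argument built around the global projection $P$ from \eqref{pbproj}, whose two critical features (Lemmas \ref{projexist} and \ref{ProjErr}) are already in place: $A(Pw-w,v)=0$ for all $v\in V_h^k$, and $\|w-Pw\|\leq Ch^{k+1}|w|_{k+1}$. I would first decompose
$$
u_h-u=\xi-\eta,\qquad \xi:=u_h-Pu\in V_h^k,\ \eta:=u-Pu,
$$
and similarly $q_h-q=\zeta-\theta$ with $\zeta:=q_h-Pq$, $\theta:=q-Pq$, where $q:=-u_{xx}$. Since $u$ is smooth and periodic, integration by parts shows that the exact pair $(u,q)$ satisfies the same variational identities $(u_t,\phi)=-A(q,\phi)$ and $(q,\psi)=A(u,\psi)$ satisfied by $(u_h,q_h)$ in \eqref{FPDG1D}. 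Subtracting produces the error equations
\begin{equation*}
(\xi_t,\phi)-(\eta_t,\phi)=-A(\zeta,\phi)+A(\theta,\phi),\qquad (\zeta,\psi)-(\theta,\psi)=A(\xi,\psi)-A(\eta,\psi).
\end{equation*}

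The decisive step is to invoke the Galerkin orthogonality \eqref{AA} to eliminate $A(\theta,\phi)$ and $A(\eta,\psi)$, leaving
$$
(\xi_t,\phi)=(\eta_t,\phi)-A(\zeta,\phi),\qquad (\zeta,\psi)=(\theta,\psi)+A(\xi,\psi).
$$
Choosing $\phi=\xi$ and $\psi=\zeta$, and using the symmetry \eqref{sym} of $A$ so that the two bilinear terms cancel upon addition, I obtain the energy identity
$$
\frac{1}{2}\frac{d}{dt}\|\xi\|^2+\|\zeta\|^2=(\eta_t,\xi)+(\theta,\zeta).
$$
Cauchy--Schwarz and Young's inequality then give
$$
\frac{d}{dt}\|\xi\|^2+\|\zeta\|^2\leq \|\xi\|^2+\|\eta_t\|^2+\|\theta\|^2.
$$

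For the projection-error terms, Lemma \ref{ProjErr} applied to $u_t$ and to $q=-u_{xx}$ yields $\|\eta_t\|\leq Ch^{k+1}|u_t|_{k+1}$ and $\|\theta\|\leq Ch^{k+1}|q|_{k+1}\leq Ch^{k+1}|u|_{k+3}$. For the starting value, since $u_h(\cdot,0)$ is the local $L^2$ projection of $u_0$ by \eqref{LocL2Proj},
$$
\|\xi(\cdot,0)\|\leq \|u_h(\cdot,0)-u_0\|+\|u_0-Pu_0\|\leq Ch^{k+1}|u_0|_{k+1}.
$$
Gronwall's inequality applied to the scalar differential inequality above, together with these bounds, yields $\|\xi(\cdot,t)\|\leq Ch^{k+1}$ with $C$ depending on $T$ linearly and on $\sup_{t\in[0,T]}(|u_t(\cdot,t)|_{k+1}+|u(\cdot,t)|_{k+3})$. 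The desired estimate \eqref{pbddgenorm} then follows from the triangle inequality $\|u_h-u\|\leq \|\xi\|+\|\eta\|$ and one more application of Lemma \ref{ProjErr}.

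The hard part is conceptual rather than computational: it is the realization that, despite the absence of any interior penalty, the specially designed projection $P$ kills the bilinear-form contributions from the projection errors exactly, and the symmetry of $A$ causes the remaining $A$-terms to cancel in the energy estimate. Once those two structural properties are exploited, no delicate handling of interface jumps or of inverse inequalities is needed — the estimate becomes essentially as simple as a conforming-FEM error analysis. The only routine care required is tracking the regularity assumption $u\in H^{k+3}$ that comes from hitting $q=-u_{xx}$ with the order-$(k+1)$ projection bound.
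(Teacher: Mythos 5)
Your proposal follows essentially the same route as the paper: the same global projection $P$, the same decomposition into $(\xi,\eta)$ and $(\zeta,\theta)$ (up to sign), the same use of the Galerkin orthogonality (\ref{AA}) to kill $A(\theta,\phi)$ and $A(\eta,\psi)$, the same cancellation of the remaining $A$-terms via the symmetry (\ref{sym}), and the same projection-error bounds $\|\eta_t\|\leq Ch^{k+1}|u_t|_{k+1}$ and $\|\theta\|\leq Ch^{k+1}|u|_{k+3}$. Up to the energy identity $\tfrac12\tfrac{d}{dt}\|\xi\|^2+\|\zeta\|^2=(\eta_t,\xi)+(\theta,\zeta)$ your argument is correct and matches the paper's.

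The gap is in the final integration step. After applying Young's inequality you arrive at $\tfrac{d}{dt}\|\xi\|^2\leq \|\xi\|^2+Ch^{2(k+1)}$, and Gronwall applied to this inequality produces $\|\xi(t)\|\leq C e^{t/2}h^{k+1}$, i.e.\ a constant that grows \emph{exponentially} in $T$, not linearly as the theorem asserts (and as you claim without justification). The paper is careful precisely at this point: it keeps the estimate in the form $\tfrac12\tfrac{d}{dt}\|e_1\|^2\leq C_1h^{k+1}(\|e_1\|+h^{k+1})$ — absorbing $(\theta,\zeta)$ into $\|\zeta\|^2$ by Young but \emph{not} converting $\|\eta_t\|\,\|\xi\|$ into $\|\xi\|^2$ — then sets $B=\|e_1\|/h^{k+1}$, integrates the ODE $BB'\leq C_1(B+1)$ exactly via the function $G(s)=s-\ln(s+1)$, and uses the concavity properties of $G^{-1}$ to conclude $B(t)\leq C_2+CT$. (Compare Theorem \ref{ThmL2ErrGen}, where the paper does use Young plus Gronwall and correspondingly only claims dependence on $T$, not linear dependence.) Your argument does establish the $O(h^{k+1})$ rate, but not the stated linear-in-$T$ constant; a simple repair in your framework is to bound $\tfrac{d}{dt}\sqrt{\|\xi\|^2+h^{2(k+1)}}\leq \|\eta_t\|+\tfrac14\|\theta\|^2 h^{-(k+1)}\leq Ch^{k+1}$ and integrate, which yields the linear growth directly.
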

\begin{proof}
 The consistency of the DG method (\ref{FPDG1D}) ensures that the exact solution $u$ and $q$ of (\ref{BeamSys1D}) also satisfy
\bq\label{FPDGExact}
\begin{aligned}
(u_t, \phi) = & - A(q,\phi),\\
(q, \psi) = & A(u,\psi)\\
\end{aligned}
\eq
for all $\phi \in V_h^k, \psi \in V_h^k$.
Subtracting (\ref{FPDG1D}) from (\ref{FPDGExact}), we obtain the error  system
\bq\label{FPDGErrSys}
\begin{aligned}
((u-u_h)_t, \phi) = & - A(q-q_h,\phi),\\
(q-q_h, \psi) = & A(u-u_h,\psi).
\end{aligned}
\eq
Denote
\bqs
\begin{aligned}
e_1= & Pu-u_h, \quad \epsilon_1= Pu-u, \\
e_2= & Pq-q_h, \quad \epsilon_2= Pq-q, \\
\end{aligned}
\eqs
and take $\phi=e_1, \psi=e_2$ in (\ref{FPDGErrSys}) respectively, we obtain
\begin{subequations}\label{FPDGErrPlug}
\begin{align}
( e_{1t}, e_1) = & (\epsilon_{1t}, e_1) + A(\epsilon_2, e_1) - A(e_2,e_1),\\
(e_2, e_2) = & (\epsilon_2, e_2)- A(\epsilon_1, e_2) + A(e_1,e_2).
\end{align}
\end{subequations}
Summation of (\ref{FPDGErrPlug}a) and  (\ref{FPDGErrPlug}b) gives
\begin{align*}
\frac{1}{2} \frac{d}{dt}\|e_1\|^2 + \|e_2\|^2 = & (\epsilon_{1t}, e_1) + (\epsilon_2, e_2) + A(\epsilon_2, e_1)- A(\epsilon_1, e_2)\\
=&(\epsilon_{1t}, e_1) + (\epsilon_2, e_2),
\end{align*}
where property (\ref{AA}) of projection $P$ has been used.  This yields
\bqs%\label{ErrIneq}
\ba
\frac{1}{2} \frac{d}{dt}\|e_1\|^2 \leq & \| \epsilon_{1t}\| \|e_1\| + \frac{1}{4} \|\epsilon_2\|^2.
%\leq & \frac{d}{dt}\|\epsilon_1\| \|e_1\| + \frac{1}{4} \|\epsilon_2\|^2 \\
%\leq & C_1 h^{k+1} \|e_1\|  + C_1 h^{2(k+1)},\\
\ea
\eqs
By property (\ref{ProjErr1D}), the right hand side is dominated by
%\bq\label{ProjErrApp}
%\begin{aligned}
%\|\partial_t \epsilon_1 \|\leq & C |u_t|_{k+1}h^{k+1},\\
%\|\epsilon_2\| \leq & C |q|_{k+1}h^{k+1} \leq C |u|_{k+3}h^{k+1}. %  = C_0h^{k+1} \sum_{j=1}^N |U|_{k+3, I_j}^2
%\end{aligned}
%\eq
%, where $C_0$ is a constant.\\
%Then, plugging (\ref{ProjErrApp}) into (\ref{ErrIneq}), we have
$
C |u_t|_{k+1}h^{k+1}\|e_1\|+\frac{1}{4}(C |u|_{k+3}h^{k+1})^2.
$
Hence
\bqs%\label{ErrIneq1}
\ba
%\frac{1}{2} \frac{d}{dt}\|e_1\|^2 \leq & \|\partial_t \epsilon_1\| \|e_1\| + \frac{1}{4} \|\epsilon_2\|^2 \\
\frac{1}{2} \frac{d}{dt}\|e_1\|^2 \leq & C_1 h^{k+1} (\|e_1\|  +  h^{k+1}),
\ea
\eqs
where  $C_1=\max\{ C \sup_{t\in[0,T]}|u_t|_{k+1}, \frac{C^2}{4} \sup_{t\in[0,T]}|u|_{k+3}^2\}$. Set $B=\frac{\|e_1\|}{h^{k+1}}$, then
$$
B\frac{dB}{dt} \leq C_1 (B+1),
$$
which upon integration over $[0, t]$ gives
\bq\label{ErrIneqB}
G(B(t)) \leq G(B(0)) + C_1 t, \\
\eq
where
%\bq\label{GX}
$G(s)=s - \ln( s+1)$
%\eq
is an increasing and convex function on $[0, \infty)$. Note that  $B(0)\leq C_2$ for
\bq\label{B0bnd}
\|e_1(\cdot, 0)\| \leq \| Pu_0-u_0\|+\|u_0-u_h(\cdot, 0) \| \leq C_2 h^{k+1}.
\eq
It can be verified that $G^{-1}(s)/s$ is decreasing for $s>0$, note also that $G^{-1}(s)$ is increasing, hence
\bqs% \label{gy}
G^{-1}(s) \leq \frac{G^{-1}(\delta)}{\delta} \max\{s, \delta\}.
\eqs
This with $\delta=G(C_2)$ when inserted into  (\ref{ErrIneqB}) gives
$$%\bq
B(t) \leq G^{-1}\left(C_1T+G(C_2)\right) \leq C_2 +C T,
$$ %\eq
with  $C=\frac{C_1C_2}{G(C_2)}$.  Thus,
$$ %\bq\label{ErrProjDG}
\|e_1(\cdot, t)\| = B(t)h^{k+1} \leq \left(C_2 +C T\right)  h^{k+1},
$$ %\eq
which combined with the approximation result in Lemma \ref{ProjErr} leads to (\ref{pbddgenorm}) as desired.
\end{proof}

\iffalse
\begin{rem}\label{GlinBd}
For $G(s) \leq M$ with some constant $M>0$, then $G(s) \leq \max(1,M) $ and
$$
s \leq \max(G^{-1}(1),G^{-1}(M)) \leq G^{-1}(1)+\frac{G^{-1}(1)}{G^{-1}(1)-\ln \left( 1+G^{-1}(1) \right)} M.
$$
To estimate $G^{-1}(M)$ with $M\geq 1$, we denote $G^{-1}(M)=e^{s'}-1 $ with $s' \geq \ln (1+G^{-1}(1))$, then $M=e^{s'}-1-s'$ and
$$
G^{-1}(M)= M \frac{G^{-1}(M)}{M}=M\frac{e^{s'}-1}{e^{s'}-1-s'} \leq \frac{G^{-1}(1)}{G^{-1}(1)-\ln \left( 1+G^{-1}(1) \right)} M
$$
\end{rem}
\fi

 \subsection{Fully-discrete DG schemes}
%As is well known an explicit time discretization for a higher order PDE would require very small time step.  We now discuss stability properties of  different time discretizations.
Let $(u^n_h, q_h^n)$ denote the approximation to $(u_h, q_h)(\cdot, t_n)$, where $t_n=n\Delta t$ with $\Delta t$ being the time step. We consider a class of time stepping methods indexed by a parameter $\theta \in [0, 1]$:  find $(u_h^{n}, q_h^{n}) \in V_h^k \times V_h^k$ such that
for all $\phi, \ \psi \in V_h^k$
\begin{subequations}\label{FPDGFull}
\begin{align}
  \left( \frac{u^{n+1}_h - u^n_h}{\Delta t},  \phi \right) = & - A(q^{n+\theta}_h,\phi),\\
    (q^{n}_h,  \psi) = & A(u^{n}_h,\psi),
\end{align}
\end{subequations}
where $v^{n+\theta} = (1-\theta) v^n + \theta v^{n+1}$. Note that when $\theta=0$, it is the forward Euler, $\theta=1$, it is backward Euler; and $\theta=1/2$, Crank-Nicolson.

To study the stability of the DG scheme (\ref{FPDGFull}), we first recall the following estimate.
\begin{lem} (\cite[Lemma 3.2]{LW16})\label{inesti}
The following inverse inequalities hold for all $v \in V_h^k$,
\begin{equation*}
\begin{aligned}
\sum_{j=1}^N \int_{I_j} (v_x)^2 dx \leq &  \frac{(k+1)^2k(k+2)}{h^2} \|v\|^2, \\
\sum_{j=1}^N [v]^2_{j+1/2} \leq &  \frac{4(k+1)^2}{h} \|v\|^2, \\
\sum_{j=1}^N \{v_x\}^2_{j+1/2}\leq &  \frac{k^3(k+1)^2(k+2) }{h^3} \|v\|^2. \\
\end{aligned}
\end{equation*}
\end{lem}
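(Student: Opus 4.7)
The plan is to reduce each of the three estimates to the reference interval $[-1,1]$ via the affine change of variables $\xi = 2(x-x_j)/h_j$, on which any $v|_{I_j}$ is a polynomial of degree at most $k$. I would expand $v(x)|_{I_j} = \sum_{i=0}^k a_i^j L_i(\xi)$ in the Legendre basis and exploit orthogonality $\int_{-1}^1 L_i L_m \, d\xi = \frac{2}{2i+1}\delta_{im}$, so that $\|v\|^2_{L^2(I_j)} = \frac{h_j}{2}\sum_{i=0}^k \frac{2(a_i^j)^2}{2i+1}$. Every inverse inequality in the statement then reduces to a quadratic estimate in the coefficients $\{a_i^j\}$, whose sharp constant is determined on the reference interval once and for all, after which summation over $j$ restores the global $L^2$ norm.

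For the first (interior) estimate, I would expand $v_x$ via the standard recurrence $L_i'(\xi) = \sum_{m<i,\; i-m \text{ odd}} (2m+1) L_m(\xi)$, so that $\int_{-1}^1 (v_x)^2 \, d\xi$ becomes a quadratic form in $\{a_i^j\}$; identifying its spectral radius (the sharp Markov-type constant for Legendre expansions) yields the factor $(k+1)^2 k(k+2)$, and after the chain-rule factor of $4/h_j^2$, summation over $j$, and replacing $h_j$ with $h$, the inequality follows. For the jump estimate, the key tool is the sharp trace bound $|p(\pm 1)|^2 \leq \frac{(k+1)^2}{2}\|p\|^2_{L^2(-1,1)}$ for polynomials of degree $\leq k$, which follows from $L_i(\pm 1) = (\pm 1)^i$ and a weighted Cauchy-Schwarz applied with the orthogonality weights $2/(2i+1)$. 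Rescaling back to $I_j$ gives $|v^\pm_{j\pm 1/2}|^2 \leq \frac{(k+1)^2}{h_j}\|v\|^2_{L^2(I_j)}$, and then the elementary bound $[v]^2_{j+1/2} \leq 2((v^-_{j+1/2})^2 + (v^+_{j+1/2})^2)$ together with summation over interfaces produces the constant $4(k+1)^2/h$.

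For the third estimate, I would combine the preceding two ideas: apply the interior inverse inequality to pass from $v$ to $v_x$, and then apply the trace bound to $v_x$, a polynomial of degree $\leq k-1$ on each cell. Composing the two factors (with the extra $h_j^{-2}$ from differentiation and a further $h_j^{-1}$ from the trace) gives the $h^{-3}$ dependence, and tracking the degree-dependent constants yields $k^3(k+1)^2(k+2)$. The main obstacle here is purely one of bookkeeping: carefully tracking the sharp Legendre-basis constants as they cascade through the trace and Markov steps in the derivative-trace estimate, which is why a citation to \cite{LW16} is used in lieu of reproducing the full calculation. All remaining manipulations reduce to routine computations once the reference-interval framework is in place.
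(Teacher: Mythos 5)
The paper gives no proof of this lemma at all --- it is quoted verbatim from \cite[Lemma 3.2]{LW16} --- so there is nothing internal to compare against; your Legendre-expansion outline is the standard argument and it does reproduce the stated constants exactly: the weighted Cauchy--Schwarz applied to $L_i'=\sum_{m<i,\,i-m\ \mathrm{odd}}(2m+1)L_m$ gives $\|p'\|^2_{L^2(-1,1)}\le \tfrac14 k(k+1)^2(k+2)\,\|p\|^2_{L^2(-1,1)}$, hence $(k+1)^2k(k+2)/h^2$ after the affine scaling, the trace bound $|p(\pm1)|^2\le \tfrac12(k+1)^2\|p\|^2_{L^2(-1,1)}$ yields $4(k+1)^2/h$ for the jumps, and composing the degree-$(k-1)$ trace constant $k^2/2$ with the Markov step gives $k^3(k+1)^2(k+2)/h^3$. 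The only caveat is in the final summation: passing from $1/h_j$ to $1/h$ requires $h_j\ge h$, which fails for $h=\max_j h_j$ on a nonuniform mesh, so a uniform (or quasi-uniform, at the cost of a mesh-ratio constant) partition is implicitly assumed --- but that is a feature of the lemma as stated, not a defect of your argument. Also, the constant produced by this route for the first inequality is the stated one but is not the sharp $L^2$ Markov constant, so "sharp" is a slight overstatement there.
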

Then, we have the following stability results.
\begin{thm}\label{FullStab}($L^2$-Stability).  For $\frac{1}{2} \leq \theta \leq 1$, the fully discrete DG scheme (\ref{FPDGFull}) is unconditionally $L^2$ stable. Moreover,
\bq\label{stab1d}
\| u_h^{n+1}\|^2 \leq \| u_h^n\|^2 -2 \Delta t \|(1-\theta)q^n_h +\theta q^{n+1}_h\|^2
\eq
holds for any $\Delta t>0$. For $0 \leq \theta < \frac{1}{2}$,
(\ref{FPDGFull}) is $L^2$ stable, i.e.,
$
\| u_h^{n+1}\| \leq \| u_h^n\|,
$
provided
\bq\label{CFL}
\Delta t  < \frac{2h^4 }{(1-2\theta)\gamma^2(k)},
\eq
where
\bq\label{GammaK}
\gamma(k) = (k+1)^2k(k+2)+ 4(k+1)^2k \sqrt{k(k+2)}.
\eq
\end{thm}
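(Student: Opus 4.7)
The plan is to combine both equations of (\ref{FPDGFull}) with a clever choice of test functions in order to produce a discrete energy identity, then handle the two ranges of $\theta$ separately.

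First I would take $\phi = u_h^{n+\theta}$ in (\ref{FPDGFull}a), which gives
\begin{equation*}
\left( \tfrac{u_h^{n+1}-u_h^n}{\Delta t}, u_h^{n+\theta} \right) = - A(q_h^{n+\theta}, u_h^{n+\theta}).
\end{equation*}
Since (\ref{FPDGFull}b) holds at both time levels $n$ and $n+1$, linear combination yields $(q_h^{n+\theta}, \psi) = A(u_h^{n+\theta}, \psi)$ for all $\psi \in V_h^k$. Choosing $\psi = q_h^{n+\theta}$ and invoking the symmetry (\ref{sym}) of $A(\cdot,\cdot)$ gives $A(q_h^{n+\theta}, u_h^{n+\theta}) = \|q_h^{n+\theta}\|^2$. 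Using the decomposition $u_h^{n+\theta} = \tfrac{1}{2}(u_h^{n+1}+u_h^n) + (\theta-\tfrac{1}{2})(u_h^{n+1}-u_h^n)$ in the left-hand side, I obtain the key identity
\begin{equation*}
\|u_h^{n+1}\|^2 - \|u_h^n\|^2 + 2\Delta t\,\|q_h^{n+\theta}\|^2 = (1-2\theta)\,\|u_h^{n+1}-u_h^n\|^2.
\end{equation*}

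For $\tfrac{1}{2}\leq \theta\leq 1$ the right-hand side is non-positive, so (\ref{stab1d}) follows at once without any restriction on $\Delta t$. For $0\leq \theta<\tfrac{1}{2}$, I need to control the jump $\|u_h^{n+1}-u_h^n\|$ in terms of $\|q_h^{n+\theta}\|$. Testing (\ref{FPDGFull}a) with $\phi = u_h^{n+1}-u_h^n$ gives
\begin{equation*}
\tfrac{1}{\Delta t}\|u_h^{n+1}-u_h^n\|^2 = -A(q_h^{n+\theta}, u_h^{n+1}-u_h^n).
\end{equation*}
I would then bound $|A(w,v)|$ by splitting it into its volume and interface contributions, applying Cauchy--Schwarz on each, and using the three inverse inequalities of Lemma \ref{inesti} to pull out factors of $h^{-1}$, $h^{-1/2}$, and $h^{-3/2}$. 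Tracking the constants carefully yields $|A(w,v)|\leq \gamma(k)h^{-2}\|w\|\|v\|$ with precisely the $\gamma(k)$ in (\ref{GammaK}). Dividing by $\|u_h^{n+1}-u_h^n\|$ gives $\|u_h^{n+1}-u_h^n\|\leq \Delta t\,\gamma(k) h^{-2}\|q_h^{n+\theta}\|$.

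Substituting this bound into the key identity yields
\begin{equation*}
\|u_h^{n+1}\|^2 + \Delta t\left[ 2 - (1-2\theta)\tfrac{\Delta t\,\gamma^2(k)}{h^4}\right] \|q_h^{n+\theta}\|^2 \leq \|u_h^n\|^2,
\end{equation*}
and requiring the bracketed coefficient to be non-negative gives exactly the CFL restriction (\ref{CFL}). The only mildly delicate step is the accounting for the constants in the estimate of $|A(w,v)|$: the two interface terms each contribute $2(k+1)^2 k\sqrt{k(k+2)}\,h^{-2}$ while the volume term contributes $(k+1)^2 k(k+2)\,h^{-2}$, and summing these has to reproduce $\gamma(k)$ exactly as stated.
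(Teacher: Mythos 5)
Your proposal is correct and follows essentially the same route as the paper: the same choice of test functions $\phi=u_h^{n+\theta}$, $\psi=q_h^{n+\theta}$ combined with the symmetry of $A$ to get the energy identity, the same splitting $u_h^{n+\theta}=\tfrac12(u_h^{n+1}+u_h^n)+(\theta-\tfrac12)(u_h^{n+1}-u_h^n)$, and for $\theta<\tfrac12$ the same test with $\phi=u_h^{n+1}-u_h^n$ together with the three inverse inequalities of Lemma \ref{inesti}, with the constants assembling into exactly the stated $\gamma(k)$. No gaps.
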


\begin{proof}
From  (\ref{FPDGFull}b) it follows
\bqs%\label{FPDGFullCom}
\int_a^b q^{n+\theta}_h \psi dx = A(u^{n+\theta}_h,\psi).
\eqs
This relation when added upon  (\ref{FPDGFull}a) with $\phi = u^{n+\theta}_h, \; \psi = q^{n+\theta}_h$ gives
%By taking $\phi = u^{n+\theta}, \; \psi = q^{n+\theta}$ in (\ref{FPDGFull}a) and (\ref{FPDGFull}b) separately, and taking the addition gives
\bq\label{stabstep}
\int_a^b \frac{u^{n+1}_h - u_h^n}{\Delta t} u_h^{n+\theta} dx + \| q_h^{n+\theta}\|^2 =0.
\eq
Using the identity
$$
u_h^{n+\theta}= \frac{1}{2}\left(u_h^{n+1}+u_h^n \right) + \left(\theta-\frac{1}{2} \right) \left(u_h^{n+1}-u_h^n \right),
$$
we rewrite  (\ref{stabstep}) as
\begin{equation}\label{FullStabF}
\begin{aligned}
 \|u^{n+1}_h\|^2-\|u_h^n\|^2 + 2\Delta t \| q_h^{n+\theta}\|^2 = (1-2\theta) \|u_h^{n+1} - u_h^n\|^2.
\end{aligned}
\end{equation}
This implies (\ref{stab1d}) if $\frac{1}{2} \leq \theta \leq 1$. %In such case DG scheme (\ref{FPDGFull}) is unconditionally stable.
If $0 \leq \theta < \frac{1}{2}$, we need to estimate the right hand side of (\ref{FullStabF}). By taking $\phi=u^{n+1}_h - u_h^n$ in (\ref{FPDGFull}a) and using Lemma \ref{inesti}, we have
\begin{equation*}\label{FullStabRhs2nd}
\begin{aligned}
\frac{1}{\Delta t }\|u_h^{n+1} - u_h^n\|^2 = & - A(q_h^{n+\theta},u_h^{n+1} - u_h^n) \\
\leq &  \left(\sum_{j=1}^N \int_{I_j} (q^{n+\theta}_{hx})^2 dx \right)^\frac{1}{2} \left(\sum_{j=1}^N \int_{I_j} (u^{n+1}_{hx} - u^n_{hx})^2 dx \right)^\frac{1}{2} \\
& + \left( \sum_{j=1}^N \{q^{n+\theta}_{hx}\}^2_{j+1/2} \right)^\frac{1}{2} \left( \sum_{j=1}^N [u_h^{n+1} - u_h^n]^2_{j+1/2} \right)^\frac{1}{2} \\
& + \left( \sum_{j=1}^N [q_h^{n+\theta}]^2_{j+1/2} \right)^\frac{1}{2} \left( \sum_{j=1}^N \{(u^{n+1}_{hx} - u^n_{hx})_{j+1/2}\}^2 \right)^\frac{1}{2} \\
\leq & \frac{(k+1)^2k(k+2)+4(k+1)^2k \sqrt{k(k+2)}}{h^2} \|q_h^{n+\theta}\| \|u_h^{n+1} - u_h^n\| \\
= & \frac{\gamma(k) }{h^2} \|q_h^{n+\theta}\| \|u_h^{n+1} - u_h^n\|,
\end{aligned}
\end{equation*}
% {\color{red} More details are needed!}\\
with  $\gamma(k)$ defined in (\ref{GammaK}).
Hence
\bqs%\label{StabDiff2nd}
\|u_h^{n+1} - u_h^n\| \leq \frac{\Delta t \gamma(k) }{h^2} \|q_h^{n+\theta}\|.
\eqs
This upon insertion into (\ref{FullStabF}) yields
\begin{equation*}
\begin{aligned}
 \|u_h^{n+1}\|^2 + \Delta t \left(2-(1-2\theta) \frac{\Delta t \gamma^2(k)}{h^4} \right) \| q_h^{n+\theta}\|^2 \leq \|u_h^n\|^2.
\end{aligned}
\end{equation*}
By (\ref{CFL}) we therefore obtain the desired stability, i.e.,   $\|u_h^{n+1}\|\leq \|u_h^n\|$.
%so that the scheme (\ref{FPDGFull}) is stable as long as
%\bq
%2-(1-2\theta) \frac{\Delta t \gamma^2(k)}{h^4}>0,
%\eq
%which gives (\ref{CFL}).
\end{proof}

The above results suggest that the semi-implicit time discretization with $\theta \in [1/2, 1]$ should be considered.  To assist  the error estimate for the fully-discrete DG scheme (\ref{FPDGFull}) with $\theta \in [1/2, 1]$,  we prepare the following lemma.
%\begin{lem}\label{anEsti}
%Let $\{a_n\}$ be a non-negative sequence satisfying
%\bq\label{ann}
%\frac{a_{n+1}^2 - a_n^2}{\tau} \leq C_1 (a_{n+1} +a_n) +C_2,
%\eq
%where $C_1, C_2$ are positive numbers, then for all $n$, we have
%\bq\label{anBnd}
%a_n \leq \frac{C_2}{C_1} G^{-1}\left(\frac{C_1}{C_2}a_0+\frac{C_1^2}{C_2} n\tau \right),
%\eq
%where
%\bq\label{GXD}
%G(s)= s-\ln \sqrt{2s +1}, \quad s>0.
%\eq
%\end{lem}
\begin{lem}\label{anEsti}
Let $\{a_n\}$ with $a_0>0$ be a non-negative sequence satisfying
\bq\label{ann}
\frac{a_{n+1}^2 - a_n^2}{\tau} \leq \alpha (a_{n+1} +a_n+1),
\eq
where $\tau>0$ and $\alpha>0$, then there exists $C=C(a_0, \alpha)$ such that
\bqs%\label{anBnd}
a_n \leq a_0+C n\tau, \quad  \forall n \geq 1.
\eqs
%where $a'_0>a_0$ is a positive a constant and
%\bq\label{GXD}
%G(s)= s-\ln \sqrt{2s +1}, \quad s>0.
%\eq
\end{lem}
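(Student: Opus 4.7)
The plan is to sidestep the degeneracy of (\ref{ann}) near $a=0$ by passing to the shifted sequence $b_n := \sqrt{a_n^2+1}$, which lies in $[1,\infty)$.  The key identity is $b_{n+1}^2 - b_n^2 = a_{n+1}^2 - a_n^2$, so (\ref{ann}) transfers verbatim to $b$, and since $a_k\le b_k$ the right-hand side can be re-expressed purely in terms of $b$:
\bqs
b_{n+1}^2 - b_n^2 \;\le\; \alpha\tau(a_{n+1}+a_n+1) \;\le\; \alpha\tau(b_{n+1}+b_n+1).
\eqs

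Because $b_k\ge 1$ for every $k$, one has $b_{n+1}+b_n\ge 2$ and hence $b_{n+1}+b_n+1 \le \tfrac{3}{2}(b_{n+1}+b_n)$.  Factoring the left-hand side as $(b_{n+1}-b_n)(b_{n+1}+b_n)$ and cancelling the strictly positive factor $b_{n+1}+b_n$ produces the clean one-step estimate $b_{n+1}-b_n \le \tfrac{3\alpha}{2}\tau$.  Telescoping from $k=0$ to $n-1$ then gives $b_n\le b_0 + \tfrac{3\alpha}{2}n\tau$, and the elementary identity $\sqrt{a_0^2+1}-a_0 = (\sqrt{a_0^2+1}+a_0)^{-1}\le 1/a_0$ (valid because $a_0>0$) yields
\bqs
a_n \;\le\; b_n \;\le\; a_0+\tfrac{1}{a_0}+\tfrac{3\alpha}{2}n\tau.
\eqs
Absorbing the $a_0$-dependent constant $1/a_0$ together with $3\alpha/2$ into a single $C=C(a_0,\alpha)$, as is standard for discrete Gronwall-type estimates, delivers the claimed bound $a_n \le a_0+Cn\tau$ for all $n\ge 1$.

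The conceptual hurdle is seeing why a direct factoring of $a_{n+1}^2-a_n^2$ fails on the original sequence: the $+1$ on the right-hand side of (\ref{ann}) permits jumps of order $\sqrt{\tau}$ whenever two consecutive terms are near zero (as one sees by completing the square in the quadratic $a_{n+1}^2-\alpha\tau a_{n+1}-a_n^2-\alpha\tau(a_n+1)\le 0$), which by itself would only yield the suboptimal growth $a_n\lesssim a_0+C\sqrt{n\tau}$.  The substitution $b_n=\sqrt{a_n^2+1}$ is precisely the device that defuses this: since $b_n$ is bounded below by $1$ independently of the behaviour of $a_n$, the ratio $(b_{n+1}+b_n+1)/(b_{n+1}+b_n)$ stays bounded, and the one-step increment becomes genuinely linear in $\tau$.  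After that shift the remainder of the proof is a one-line telescoping argument.
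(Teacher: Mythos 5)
Your substitution $b_n=\sqrt{a_n^2+1}$ is a genuinely different and more elementary device than the paper's (which passes to the running maximum $A_n=\max_{0\le i\le n}a_i$ and compares with the Gronwall-type function $H(s)=s-\ln\sqrt{2s+1}$, using that $H$ is increasing and convex with $H(0)=0$ so that $H^{-1}(s)/s$ is decreasing), and everything up to the telescoped bound $a_n\le b_n\le \sqrt{a_0^2+1}+\tfrac{3\alpha}{2}n\tau$ is correct. The gap is in the very last sentence. The quantity $b_0-a_0=\sqrt{a_0^2+1}-a_0=(\sqrt{a_0^2+1}+a_0)^{-1}$ is a strictly positive \emph{additive} constant, and an additive constant cannot be ``absorbed'' into a term of the form $Cn\tau$ with $C=C(a_0,\alpha)$ independent of $\tau$: take $n=1$ and let $\tau\to0$; the claimed conclusion forces $a_1\le a_0+C\tau\to a_0$, whereas your bound only yields $a_1\le a_0+1/a_0+\tfrac{3\alpha}{2}\tau$. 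What you have actually proved is $a_n\le a_0+c(a_0)+Cn\tau$ with $c(a_0)=\sqrt{a_0^2+1}-a_0>0$, which is strictly weaker than the lemma as stated. (That weaker form does happen to suffice for the one place the lemma is invoked, in the proof of Theorem \ref{thmErrFull}, where only a bound of the type $a_n\le C(a_0,\alpha)(1+n\tau)$ is needed; but it is not the statement you were asked to prove.)

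The reason the paper's route incurs no such loss is that its comparison function vanishes at the initial datum in the right way: telescoping gives $H(A_n)\le H(a_0)+\alpha n\tau$, and since $H(0)=0$ and $H$ is convex, $H^{-1}(s)\le \frac{a_0}{H(a_0)}\,s$ for $s\ge H(a_0)$, which returns exactly $A_n\le a_0+\frac{\alpha a_0}{H(a_0)}\,n\tau$ with no leftover term. To repair your argument you would need to eliminate the artificial shift at $n=0$, e.g.\ by first reducing to a nondecreasing sequence via the running-maximum trick and then comparing the increment with $\int_{A_n}^{A_{n+1}}\frac{ds}{2s+1}$ --- at which point you have essentially reproduced the paper's proof.
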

\begin{proof} Define  $A_n=\max_{0\leq i\leq n} a_i$, then (\ref{ann}) remains valid for $A_n$, i.e.,
\bq\label{Ann}
\frac{A_{n+1}^2 - A_n^2}{\tau} \leq \alpha (A_{n+1} +A_n+1).
\eq
In fact, we have  $
a_n \leq A_n, \forall  n \geq 0,
$
and
$$ %\bq\label{An1Max}
A_{n+1}=\max \{ a_{n+1}, A_n\}.
$$ % \eq
If $A_{n+1}=A_n$, (\ref{Ann}) is obvious; otherwise if $A_{n+1}=a_{n+1}$,
it follows that
\bqs
\frac{A_{n+1}^2 - A_n^2}{\tau} \leq\frac{a_{n+1}^2 - a_n^2}{\tau} \leq \alpha (a_{n+1} +a_n+1) \leq \alpha (A_{n+1} +A_n+1).
\eqs
%where we have used (\ref{ann}) and (\ref{annineq}).
Rewriting (\ref{Ann}) as
$$ % \bq\label{AB}
A_{n+1} -A_n -\frac{A_{n+1}-A_n}{A_{n+1}+A_{n}+1} \leq \alpha \tau,
$$ %\eq
and using
$$
\int_{A_n}^{A_{n+1}} \frac{1}{2s+1}ds \geq \frac{A_{n+1}-A_n}{A_{n+1}+A_{n}+1},
$$
we have
$$
H(A_{n+1})-H(A_n) \leq  \alpha \tau,
$$
where $H(s)=s -\ln \sqrt{2s+1}$, and therefore
%\bq\label{GABnd}
$$
H(A_{n}) \leq H(A_0) +\alpha n\tau.
$$
Note that $H$ is increasing and convex over $[0, \infty)$, hence we have
$$ % \bq\label{aninv}
A_n \leq  H^{-1} \left(H(A_0)+\alpha n\tau \right)=  H^{-1} \left(H(a_0)+\alpha n\tau \right).
$$ %\eq
It can be verified that $H^{-1}(s)/s$ is decreasing for $s>0$. Thus,
$$
A_n \leq \frac{a_0}{H(a_0)}\left(H(a_0)+\alpha  n\tau \right) = a_0 + C n\tau, \quad C=\frac{\alpha a_0}{H(a_0)}.
$$
Going back to $a_n \leq A_n$ we prove the claimed estimate.
%$$
%C_1=\frac{Ca_0}{a_0 -\ln \sqrt{2a_0 +1}}.
%$$
\end{proof}

%\begin{lem}\label{anEsti}
%Let $\{a_n\}$ be a non-negative sequence satisfying
%\bq
%\frac{a_{n+1}^2 - a_n^2}{\tau} \leq C_1 (a_{n+1} +a_n) +C_2,
%\eq
%where $C_1, C_2$ and $\tau$ are constants,
%then for all $n$ such that $n \tau \leq T$, we have
%\bq\label{anBnd}
%a_n \leq A^*,
%\eq
%where
%\bq
%A^* = \left \{ x\in[0, +\infty) | G(x) = C_1 T +G(0)  \right \},
%\eq
%and the convex function
%\bq\label{GXD}
%G(x)= x-\frac{C_2}{2C_1} \ln \left( \frac{2 C_1}{C_2}x +1 \right).
%\eq
%\end{lem}
%\begin{proof}
%Take $A_n=\max_{0\leq i\leq n} a_i$, then
%\bq\label{anVSAn}
%a_n \leq A_n,
%\eq
%and
%\bq\label{An1Max}
%A_{n+1}=\max \{ a_{n+1}, A_n\}.
%\eq
%From (\ref{An1Max}), if $a_{n+1} \leq A_n$, we have
%\bq\label{An1Eql}
%A_{n+1} = A_n,
%\eq
%and if $a_{n+1} > A_n$, then $A_{n+1} = a_{n+1}$ and it follows
%\bq
%\frac{A_{n+1}^2 - A_n^2}{\tau} \leq \frac{a_{n+1}^2 - a_n^2}{\tau} \leq C_1 (a_{n+1} +a_n) +C_2 \leq C_1 (A_{n+1} +A_n) +C_2,
%\eq
%which gives
%\bq\label{An1Bnd}
%A_{n+1}-A_n \leq  \frac{C_2(A_{n+1}-A_n)}{C_1(A_{n+1}+A_n)+C_2} +C_1\tau \leq  \frac{C_2(A_{n+1}-A_n)}{2C_1 A_n+C_2} +C_1\tau.
%\eq
%Define a convex function $G(x)$ as in (\ref{GXD}), then we have
%\bq\label{GATay}
%G(A_{n+1})-G(A_n) \leq G'(A_n) (A_{n+1}-A_n) = A_{n+1}-A_n - \frac{C_2(A_{n+1}-A_n)}{2C_1 A_n+C_2}.
%\eq
%Then (\ref{An1Eql}) and (\ref{An1Bnd}) imply
%\bq\label{GABnd}
%G(A_{n+1})-G(A_n) \leq C_1\tau,
%\eq
%and taking summation from $0$ to $n$, we obtain
%\bq\label{GAEq}
%\ba
%G(A_{n}) \leq &  C_1 n \tau + G(A_0) \\
%\leq & C_1 T + G(A_0). \\
%\ea
%\eq
%Solving (\ref{GAEq}) along with (\ref{anVSAn}) gives (\ref{anBnd}).
%\end{proof}

\begin{thm}\label{thmErrFull}
Let $u^n_h$ be the numerical solution to the fully-discrete DG scheme (\ref{FPDGFull}) with $\frac{1}{2} \leq \theta  \leq 1$, and $u$ be the smooth solution to problem (\ref{Beam1D}), then
\bq\label{pbddgFullUL2}
\|  u(\cdot,t^n)-u^n_h(\cdot) \| \leq C \left ( h^{k+1} + (\theta- 1/2) \Delta t + (\Delta t)^2 \right),\\
\eq
where  $C$ depends on $\sup_{t \in [0,T]}|u_t(\cdot,t)|_{k+1}$, $\sup_{t \in [0,T]}|u(\cdot, t)|_{k+3}$, { $\sup_{t\in [0,T]} \| u_{tt}(\cdot, t)\|$,  $\sup_{t\in [0,T]} \|u_{ttt}(\cdot, t)\|$} and linearly on $T$, but independent of  $h, \Delta t$.
\end{thm}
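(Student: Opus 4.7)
The plan is to parallel the semi-discrete proof of Theorem \ref{ThmL2Err}, inserting Taylor expansions for the time stepping and invoking Lemma \ref{anEsti} to close a Gronwall-type estimate. First I would establish that the exact pair $(u,q)$ satisfies a perturbed version of (\ref{FPDGFull}): writing $t^{n+\theta} := (1-\theta)t^n + \theta t^{n+1}$ and $q_e^{n+\theta} := (1-\theta)q(t^n) + \theta q(t^{n+1})$, one has
\begin{equation*}
\left(\frac{u(t^{n+1}) - u(t^n)}{\Delta t}, \phi\right) + A(q_e^{n+\theta}, \phi) = (R^n, \phi), \qquad (q(t^n), \psi) = A(u(t^n), \psi).
\end{equation*}
Taylor expansion of $u$ about $t^{n+\theta}$ gives $[u(t^{n+1}) - u(t^n)]/\Delta t - u_t(t^{n+\theta}) = (1/2 - \theta)\Delta t\, u_{tt} + O((\Delta t)^2)$, while the mismatch $q_e^{n+\theta} - q(t^{n+\theta})$ is $O((\Delta t)^2)$ in terms of $q_{tt}$. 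Trading spatial for temporal derivatives via the PDE $u_{xxxx} = -u_t$ (so that $u_{tt,xxxx} = -u_{ttt}$ and $q_{tt,xx} = u_{ttt}$), I would arrive at $\|R^n\| \leq C\bigl((\theta - 1/2)\Delta t + (\Delta t)^2\bigr)$ with $C$ depending on $\sup_t \|u_{tt}\|$ and $\sup_t \|u_{ttt}\|$.

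Next, set $e_1^n = Pu(t^n) - u_h^n$, $e_2^n = Pq(t^n) - q_h^n$, and $\epsilon_j^n = Pw_j(t^n) - w_j(t^n)$ ($w_1 = u,\, w_2 = q$). Subtracting (\ref{FPDGFull}) from the consistent relations, eliminating all $A(\epsilon_i^{n+\theta}, \cdot)$ contributions via Lemma \ref{projexist}, then testing with $\phi = e_1^{n+\theta}$ and $\psi = e_2^{n+\theta}$ (the latter obtained by convex combination of the second equation at times $n$ and $n+1$), and invoking the symmetry $A(e_1^{n+\theta}, e_2^{n+\theta}) = A(e_2^{n+\theta}, e_1^{n+\theta})$, I would obtain the energy identity
\begin{equation*}
\frac{\|e_1^{n+1}\|^2 - \|e_1^n\|^2}{2\Delta t} + \left(\theta - \tfrac{1}{2}\right)\frac{\|e_1^{n+1} - e_1^n\|^2}{\Delta t} + \|e_2^{n+\theta}\|^2 = (\epsilon_2^{n+\theta}, e_2^{n+\theta}) + \left(\frac{\epsilon_1^{n+1} - \epsilon_1^n}{\Delta t}, e_1^{n+\theta}\right) + (R^n, e_1^{n+\theta}).
\end{equation*}

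For $\theta \geq 1/2$ the second term on the left is nonnegative. Young's inequality $(\epsilon_2^{n+\theta}, e_2^{n+\theta}) \leq \tfrac{1}{2}\|\epsilon_2^{n+\theta}\|^2 + \tfrac{1}{2}\|e_2^{n+\theta}\|^2$ absorbs the $e_2$-part into the left; then using Lemma \ref{ProjErr} so that $\|\epsilon_j^{n+\theta}\| \leq Ch^{k+1}$, the mean-value bound $\|(\epsilon_1^{n+1} - \epsilon_1^n)/\Delta t\| \leq \sup_{[t^n,t^{n+1}]} \|\epsilon_{1t}\| \leq C|u_t|_{k+1}h^{k+1}$, the truncation estimate above, and $\|e_1^{n+\theta}\| \leq \|e_1^n\| + \|e_1^{n+1}\|$, I would arrive at
\begin{equation*}
\frac{\|e_1^{n+1}\|^2 - \|e_1^n\|^2}{\Delta t} \leq C\beta\bigl(\|e_1^{n+1}\| + \|e_1^n\| + \beta\bigr), \qquad \beta := h^{k+1} + (\theta - \tfrac{1}{2})\Delta t + (\Delta t)^2.
\end{equation*}
Rescaling $a_n := \|e_1^n\|/\beta$ fits the hypothesis of Lemma \ref{anEsti} (with $\tau = \Delta t$), yielding $a_n \leq a_0 + C'n\Delta t$. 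Since $\|e_1^0\| \leq \|Pu_0 - u_0\| + \|u_0 - u_h(\cdot,0)\| \leq Ch^{k+1} \leq C\beta$ by Lemma \ref{ProjErr} and (\ref{LocL2Proj}), $a_0$ remains uniformly bounded, whence $\|e_1^n\| \leq (C + C'T)\beta$ for $n\Delta t \leq T$. A triangle inequality with $\|u(t^n) - Pu(t^n)\| \leq Ch^{k+1}$ then delivers (\ref{pbddgFullUL2}).

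The main obstacle is that the forcing enters the energy inequality linearly rather than quadratically in $\|e_1\|$, so a direct discrete Gronwall argument does not apply; this is precisely the role of Lemma \ref{anEsti}, where the rescaling by $\beta$ converts sublinear forcing into the desired linear-in-$T$ growth. A secondary delicacy is tracking the $(1/2 - \theta)$ symmetry in the temporal truncation, so that at $\theta = 1/2$ the estimate sharpens to the $O(h^{k+1} + (\Delta t)^2)$ Crank--Nicolson rate and retains the sharp $\theta$-dependence for $\theta \in (1/2, 1]$.
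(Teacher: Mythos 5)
Your proposal follows essentially the same route as the paper's proof: the same Taylor-expansion truncation term (the paper's $F(n,x,t,\theta)$, your $R^n$), the same projection-based error splitting with property (\ref{AA}) killing the bilinear terms, the same test functions $e_1^{n+\theta}, e_2^{n+\theta}$ and energy identity, and the same closure via Lemma \ref{anEsti}. The only cosmetic differences are that you rescale by $\beta$ rather than by $h^{k+1}$ (both fit the lemma, since $\beta \geq h^{k+1}$), and you expand about $t^{n+\theta}$ and then absorb the $q$-mismatch through the PDE, whereas the paper keeps the convex combination $q^{n+\theta}$ exact in the bilinear form and expands only $u_t^n, u_t^{n+1}$.
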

\begin{proof}
Denote $u^n=u(x,t^n)$ and $q^n=q(x,t^n)$, then the consistency of the DG scheme, as given in (\ref{FPDGExact}), when evaluated at $t=t^{n+\theta}$ is
\bq\label{FPDGETn}
\begin{aligned}
(u^{n+\theta}_t, \phi) = & - A(q^{n+\theta},\phi),\\
(q^{n}, \psi) = & A(u^{n},\psi),\\
\end{aligned}
\eq
for all $\phi \in V_h^k, \psi \in V_h^k$, where $v^{n+\theta} = \theta v^{n+1}+(1-\theta) v^n$ for $v=u, q$. To proceed, we first evaluate the term $u^{n+\theta}_t$. By Taylor's expression, we have
\bqs
\ba
u^n_t= & \frac{u^{n+1}-u^n}{\Delta t}-\frac{1}{2} u^n_{tt} \Delta t-\frac{1}{2\Delta t}\int_{t^n}^{t^{n+1}} (t^{n+1}-s)^2  u_{ttt}(x,s) ds, \\
u^{n+1}_t = & \frac{u^{n+1}-u^n}{\Delta t}+\frac{1}{2} u^{n+1}_{tt} \Delta t -\frac{1}{2\Delta t}\int_{t^n}^{t^{n+1}} (t^n-s)^2 u_{ttt}(x,s) ds,
\ea
\eqs
%along with
%\bqs
%u^{n+1}_{tt} = u^n_{tt} + \int_{t^n}^{t^{n+1}} \partial_t^3 u(x,s) ds,
%\eqs  % then it follows
so that
$$% \bq\label{UNTheta}
u^{n+\theta}_t = \theta u^{n+1}_t + (1-\theta) u^n_t = \frac{u^{n+1}-u^n}{\Delta t} + F(n,x,t,\theta), \\
$$ % \eq
where
%\bq
\begin{align*}
F(n,x,t,\theta) = & u^n_{tt} \Delta t \left( \theta- \frac{1}{2}\right)  - (1-\theta) \left (\frac{1}{2\Delta t}\int_{t^n}^{t^{n+1}} (t^{n+1}-s)^2 u_{ttt}(x,s) ds \right) \\
& + \theta \left(\frac{1}{2} \Delta t \int_{t^n}^{t^{n+1}} u_{ttt}(x,s) ds -\frac{1}{2\Delta t}\int_{t^n}^{t^{n+1}} (t^n-s)^2 u_{ttt}(x,s) ds \right).
\end{align*}
%\eq
Then (\ref{FPDGETn}) becomes
$$ % \bq\label{FPDGETnEx}
\begin{aligned}
\left(\frac{u^{n+1}-u^n}{\Delta t}, \phi \right) = & - A(q^{n+\theta},\phi) - (F(n,x,t,\theta), \phi),\\
(q^{n}, \psi) = & A(u^{n},\psi),
\end{aligned}
$$ %\eq
which together with  (\ref{FPDGFull})  gives
\bq\label{FPDGETnSub}
\begin{aligned}
\left(\frac{(u^{n+1}-u_h^{n+1})-(u^n-u_h^n)}{\Delta t}, \phi \right) = & - A(q^{n+\theta}-q_h^{n+\theta},\phi) - (F(n,x,t,\theta), \phi),\\
(q^{n+\theta}-q_h^{n+\theta}, \psi) = & A(u^{n+\theta}-u_h^{n+\theta},\psi).
\end{aligned}
\eq
%where we took the combination of second equation at $t^n$ and $t^{n+1}$.\\
Denote
\bqs
\ba
e_1^n = & Pu^n - u^n_h, \quad \epsilon_1^n = Pu^n -u^n, \\
e_2^n = & Pq^n - q^n_h, \quad \epsilon_2^n = Pq^n -q^n,
\ea
\eqs
and take $\phi = e_1^{n+\theta}, \ \psi = e_2^{n+\theta}$ in (\ref{FPDGETnSub}),  upon summation and using (\ref{AA}),
%it follows
%\begin{subequations}\label{FPDGETnErr}
%\begin{align}
%\left(\frac{e_1^{n+1}-e_1^n}{\Delta t}, e_1^{n+\theta} \right) = & \left(\frac{\epsilon_1^{n+1}-\epsilon_1^n}{\Delta t}, e_1^{n+\theta} \right)- A(e_2^{n+\theta},e_1^{n+\theta}) +A(\epsilon_2^{n+\theta},e_1^{n+\theta})  - (F(n,x,t,\theta), e_1^{n+\theta}),\\
%(e_2^{n+\theta}, e_2^{n+\theta}) = &(\epsilon_2^{n+\theta}, e_2^{n+\theta}) + A(e_1^{n+\theta},e_2^{n+\theta})-A(\epsilon_1^{n+\theta},e_2^{n+\theta}).
%\end{align}
%\end{subequations}
%Taking summation of (\ref{FPDGETnErr}a) and (\ref{FPDGETnErr}b) and  using (\ref{AA}),
we obtain
\bq\label{PFDGErr}
\left(\frac{e_1^{n+1}-e_1^n}{\Delta t}, e_1^{n+\theta} \right) + (e_2^{n+\theta}, e_2^{n+\theta}) = \left(\frac{\epsilon_1^{n+1}-\epsilon_1^n}{\Delta t}, e_1^{n+\theta} \right) + (\epsilon_2^{n+\theta}, e_2^{n+\theta}) -(F(n,x,t,\theta), e_1^{n+\theta}).
\eq
Applying
$$
e_1^{n+\theta}= \frac{1}{2}\left(e_1^{n+1}+e_1^{n} \right) + \left(\theta-\frac{1}{2} \right) \left(e_1^{n+1}-e_1^{n} \right),
$$
and the Cauchy-Schwarz inequality to (\ref{PFDGErr}), it follows that
\bq\label{ErrUpBnd1}
\ba
&\frac{\|e_1^{n+1}\|^2-\|e_1^n\|^2}{2\Delta t} \leq \left( \left \| \frac{\epsilon_1^{n+1}-\epsilon_1^n}{\Delta t} \right \| + \|F(n,\cdot,t,\theta)\| \right) (\| e_1^{n+1}\| + \| e_1^n\|) + \frac{1}{2}\left(\|\epsilon_2^{n+1}\|^2 +\|\epsilon_2^{n}\|^2\right).
\ea
\eq
Recall the projection error estimate (\ref{ProjErr1D}), we have
\bq\label{EpsiBnd}
\ba
\|\epsilon_2^{n+i}\|
\leq & Ch^{k+1} | q(\cdot,t^{n+i})|_{k+1} =  C_1 h^{k+1} |u(\cdot,t^{n+i})|_{k+3},
\ea
\eq
for $i=0,1$, and along with the mean value theorem, we also have
\bq\label{PartEpsi}
\ba
\left \| \frac{\epsilon_1^{n+1}-\epsilon_1^n}{\Delta t} \right \| %= & \left \| \frac{Pu^{n+1}-Pu^n}{\Delta t} - \frac{u^{n+1}-u^n}{\Delta t}  \right \|
=   \left \| P \left( \frac{u^{n+1}-u^n}{\Delta t} \right)- \frac{u^{n+1}-u^n}{\Delta t}  \right \|
\leq  C_2h^{k+1} |u_{t}(\cdot,t^*)|_{k+1}, \\
\ea
\eq
where $t^* \in (t^n, t^{n+1})$.  As for the term involving $F$, we have
$$
\ba
| F(n,x,t,\theta) | \leq & \left| u^n_{tt} \right | \Delta t \left( \theta- \frac{1}{2}\right) + \frac{(1-\theta)}{2\Delta t}\int_{t^n}^{t^{n+1}} (t^{n+1}-s)^2 | u_{ttt}(x,s) |  ds \\
& +  \frac{\theta \Delta t }{2}\int_{t^n}^{t^{n+1}}\left| u_{ttt}(x,s) \right | ds + \frac{\theta}{2\Delta t}\int_{t^n}^{t^{n+1}} (t^n-s)^2 \left| u_{ttt}(x,s) \right | ds \\
 \leq & \left( \theta- \frac{1}{2}\right) \Delta t \sup_{t\in [0,T]} | u_{tt}(x, t)|+\left(\frac{1}{6}+\frac{\theta}{2}\right) (\Delta t)^2 \sup_{t\in [0,T]} |u_{ttt}(x, t)|,
\ea
$$
hence
\bq\label{FBnd}
\| F(n,\cdot,t,\theta) \|\leq \left( \theta- \frac{1}{2}\right) \Delta t \sup_{t\in [0,T]} \|u_{tt}(\cdot, t)\|+ (\Delta t)^2 \sup_{t\in [0,T]} \|u_{ttt}(\cdot, t)\|.
\eq
Plugging (\ref{FBnd}), (\ref{PartEpsi}) and (\ref{EpsiBnd})  into (\ref{ErrUpBnd1}) leads to
\bqs%\label{ErrUpBnd2}
\ba
\frac{\|e_1^{n+1}\|^2-\|e_1^n\|^2}{2\Delta t} \leq &  C \left ( h^{k+1} + \left( \theta- 1/2\right) \Delta t + (\Delta t)^2 \right) (\| e_1^{n+1}\| + \| e_1^n\|) + C h^{2(k+1)},
%\leq & C_1 \left ( h^{k+1} + \left( \theta- 1/2\right) \Delta t + (\Delta t)^2 \right) (\| e_1^{n+1}\| + \| e_1^n\|) \\
%& + C_2 \left ( h^{k+1} + \left( \theta- 1/2\right) \Delta t + (\Delta t)^2 \right)^2,
\ea
\eqs
where  $C$ depends on $\sup_{t\in [0,T]}|u_t(\cdot, t)|_{k+1}$, $\sup_{t\in [0,T]}|u(\cdot, t)|_{k+3}$, { $\sup_{t\in [0,T]} \| u_{tt}(\cdot, t)\|$ and $\sup_{t\in [0,T]} \| u_{ttt}(\cdot, t)\|$}.
%Denote $B^n=\frac{\|e_1^n\|}{\left ( h^{k+1} + \left| \theta- \frac{1}{2}\right| \Delta t + \left( \frac{\theta}{2}+\frac{1}{6} \right)\Delta t)^2 \right)}\geq 0$, then (\ref{ErrUpBnd2}) becomes
%\bq\label{ErrUpBnd1Sc}
%\ba
%&\frac{(B^{n+1})^2-(B^n)^2}{2\Delta t} \leq C_1 ( B^{n+1} + B^n) + C_2.
%\ea
%\eq

Set $a_n=\frac{\|e_1^n\|}{ h^{k+1}}$, $\tau= 2\Delta t$, then
$a_n$ satisfies (\ref{ann}) with
$$\alpha=Ch^{-(k+1)}\left ( h^{k+1} + \left( \theta- 1/2\right) \Delta t + (\Delta t)^2 \right).
$$
 Note that $e_1^0= Pu_0-u_h^0$ and
$
\|e_1^0 \| \leq \| Pu_0-u_0\|+\|u_0-u_h^0 \| \leq C_0 h^{k+1},
$
we thus take  $a_0= C_0$. By Lemma \ref{anEsti} we have
\bqs%\label{ProjNum}
\ba
\|e_1^n\| \leq h^{k+1} \left(C_0 +  \frac{C_0\alpha}{H(C_0)} n\tau \right)  \leq C(1+T) \left ( h^{k+1} + \left( \theta- 1/2\right) \Delta t + (\Delta t)^2 \right),
\ea
\eqs
%\bq\label{ProjNum}
%\ba
%\|e_1^n\| \leq & \frac{C_2}{C_1} G^{-1}\left(\frac{C_1}{C_2}\|e_1^0\| +\frac{C_1^2}{C_2}T  \right) \left ( h^{k+1} + \left( \theta- 1/2\right) \Delta t + (\Delta t)^2 \right).
%\ea
%\eq
which combined with the projection error (\ref{ProjErr1D}) leads to (\ref{pbddgFullUL2}) as desired.
\end{proof}

\subsection{Algorithm}
The details related to the implementation of scheme (\ref{FPDGFull}) with $\theta \in [1/2, 1]$ is summarized in the following algorithm.
\begin{itemize}
  \item Step 1 (Initialization) from the given initial data $u_0(x)$,
  \begin{enumerate}
    \item generate $u_h^0:=u_h(x,0) \in V_h^k$ from  the piecewise $L^2$ projection (\ref{LocL2Proj}), and
    \item further obtain $q_h^0$ from solving (\ref{FPDGFull}b).
  \end{enumerate}
  \item Step 2 (Evolution) obtain $u^{n+1}_h, \ q^{n+1}_h$ by solving (\ref{FPDGFull}) through the following form:
 % \begin{enumerate}
    %\item set the parameter $\theta=1/2$.
   % \item generate the following linear system %by taking test functions $\phi, \ \psi \in V_h^k$ in the following system
    \begin{subequations}\label{FPDGFullAlg}
    \begin{align}
   \frac{1}{\Delta t}  (u^{n+1}_h,  \phi) +  \theta  A(q^{n+1}_h,\phi)= &  \frac{1}{\Delta t}  ( u^{n}_h,  \phi ) - (1-\theta) A(q^{n}_h,\phi),\\
 \theta A(u^{n+1}_h,\psi)- \theta  ( q^{n+1}_h,  \psi) = 0.
    \end{align}
    \end{subequations}
  %  (Here, (\ref{FPDGFullAlg}b) is obtained by multiplying (\ref{FPDGFull}b) by $-\theta$ at $n+1$ level.)
    %\item
  %\end{enumerate}
\end{itemize}
\begin{rem} The advantage of using (\ref{FPDGFullAlg}) is that its  coefficient matrix is symmetric, hence more efficient linear system solvers, such as the ILU preconditioner + FGMRES (see e.g.,  \cite{S93}), ILU preconditioner + Bicgstab (see e.g.,  \cite{CMZ16}).
can be used.
\end{rem}

\section{The DG scheme in multi-dimensions} In this section we present DG schemes in multi-dimensional setting.   Without loss of generality,  we describe our DG scheme and prove the optimal error estimates in two dimension ($d=2$); The analysis depending  on the tensor product of polynomials  can be easily extended to higher dimensions.  Hence, from now on we shall restrict ourselves mainly to the following two-dimensional problem
\begin{subequations}\label{3.1}
\begin{align}
& u_t=-(\partial_x^2 +\partial_y^2)^2 u, \quad (x, y)\in \Omega, \; t>0, \\
& u(x, y, 0)=u_0(x, y), \quad (x, y)\in \Omega
\end{align}
\end{subequations}
again with periodic boundary conditions.

We partition $\Omega$ by rectangular meshes
\begin{equation*}
    \Omega=\sum^{N,M}_{i,j}I_{i,j}, \quad  I_{i,j}=[x_{i-\frac{1}{2}}, x_{j+\frac{1}{2}}]\times[y_{j-\frac{1}{2}}, y_{j+\frac{1}{2}}].
\end{equation*}
For simplicity we assume we have a uniform rectangular mesh with
$\Delta x=x_{i+1/2}-x_{i-1/2}, \Delta y=y_{j+1/2}-y_{j-1/2}$.
Let
$$
Q_h=\{v\in L^2(\Omega): \quad v|_{I_{i,j}}\in Q^k(I_{i,j})\},
$$
where $Q^k(K)$ denotes the space of tensor-product polynomials of degree at most $k$ in each variable defined on $K$. No continuity is assumed across cell boundaries.

The semi-discrete DG approximations $(u_h, q_h) \in Q_h \times Q_h$ of (\ref{3.1}) are defined through the reformulation of form (\ref{BeamSys}) such that for all admissible test functions $\phi, \ \psi \in Q_h$ and all  $I_{i,j}$
\begin{equation}\label{FPDGCell2D}
\begin{aligned}
    \iint_{I_{i, j}} u_{ht} \phi dxdy = & - \iint_{I_{i,j}} \nabla q_h \cdot \nabla\phi dxdy
    + \int_{y_{j-1/2}}^{y_{j+1/2}} \left(\{q_{hx}\} \phi  + (q_h - \{q_h\}) \phi_x \right) \Big|_{x_{i-1/2}}^{x_{i+1/2}} dy \\
    & +\int_{x_{i-1/2}}^{x_{i+1/2}} \left(\{q_{hy}\} \phi +  (q_h-\{q_h\})\phi_y\right) \Big|_{y_{j-1/2}}^{y_{j+1/2}} dx, \\
   % & + \alpha \left( - \iint_{I_{j,k}} \nabla u \cdot \nabla\phi dxdy+ \int_{I_k}\widehat{(u_x)} \phi |_{\partial I_j} + (u - \widehat{u})\phi_x|_{\partial I_j} dy \right) \\
  %  & + \alpha \left(  \int_{I_j}\widehat{(u_y)} \phi |_{\partial I_k} + (u - \widehat{u})\phi_y|_{\partial I_k} dx \right)+\iint_{I_{j,k}} f \phi dxdy \\
    \iint_{I_{i,j}} q_h \psi dxdy
    = & \iint_{I_{i,j}} \nabla u_h \cdot \nabla \psi dxdy - \int_{y_{j-1/2}}^{y_{j+1/2}} \left(\{u_{hx}\} \psi  + (u_h-\{u_h\})\psi_x
    \right)  \Big|_{x_{i-1/2}}^{x_{i+1/2}} dy \\
    &- \int_{x_{i-1/2}}^{x_{i+1/2}} \left( \{u_{hy}\} \psi  + (u_h-\{u_h\})\psi_y \right)\Big|_{y_{j-1/2}}^{y_{j+1/2}} dx, \\
\end{aligned}
\end{equation}
where
\begin{equation*}
\begin{aligned}
& v\Big|_{x_{i-1/2}}^{x_{i+1/2}} =v(x_{i+1/2}^-, y) - v(x_{i-1/2}^+, y), \\
& v\Big|_{y_{j-1/2}}^{y_{j+1/2}} =v(x, y_{j+1/2}^-) - v(x, y_{j-1/2}^+),\\
& \{v\}\Big|_{x_{i+1/2}} =  \frac{1}{2} \left(v(x_{i+1/2}^-, y) + v(x_{i+1/2}^+, y)\right),
\\
& \{v\}\Big|_{y_{j+1/2}}=\frac{1}{2} \left(v(x, y_{j+1/2}^-) +v(x, y_{j+1/2}^+)\right).
\end{aligned}
\end{equation*}
The initial data for $u_h$ is also taken as the piecewise $L^2$ projection of $u_0$, that is $u_h(x, y, 0) \in Q_h$ such that
\begin{align*}
\iint_{\Omega} (u_0(x,y)-u_h(x, y, 0))\phi(x,y)dxdy=0, \quad \forall \phi\in Q_h.
%\iint_{I_{i,j}} (-\Delta u_0(x,y)-q_h(0, x, y))\psi(x,y)dxdy=0, \quad \forall \psi\in Q_h.
\end{align*}
%Further $q_h(x, y, 0)$ can be obtained from the second equation of  (\ref{FPDGCell2D}).
\subsection{Stability and a priori error estimates} In order to check the stability of  the above scheme, we sum (\ref{FPDGCell2D}) over all computaitonal cells to obtain
\begin{subequations}\label{FPDG1D+}
\begin{align}
(u_{ht}, \phi) = &- A(q_h,\phi),\\
(q_h, \psi) = & A(u_h,\psi),
\end{align}
\end{subequations}
where $(\cdot, \cdot)$ denotes the inner product of two functions over $\Omega$, and the bilinear functional
\bq
\label{bilinear+}
\ba
A(w,v)= & \sum_{i,j=1}^{N,M}\iint_{I_{i,j}} \nabla w \cdot \nabla v dxdy
 +\sum_{i, j=1}^{N, M} \int_{y_{j-1/2}}^{y_{j+1/2}}
 \left( \{w_{x}\} [v]  + \{v_x\} [w] \right)_{x_{i+1/2}}dy  \\
& +\sum_{i, j=1}^{N, M} \int_{x_{i-1/2}}^{x_{i+1/2}}  \left(\{w_{y}\}[v] +\{v_y\}[w] \right)_{y_{j+1/2}}dx.
\ea
\eq
For scheme (\ref{FPDGCell2D})  the following stability result holds.
\begin{thm}\label{SemiStab2D} ($L^2$-Stability). The numerical solution $u_h$ to (\ref{FPDG1D+}) satisfies
\bqs
\frac{1}{2}\frac{d}{dt} \iint_{\Omega} |u_h|^2dxdy=-\iint_{\Omega} q_h^2dxdy \leq 0.
\eqs
\end{thm}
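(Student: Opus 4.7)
The plan is to mimic the proof of Theorem \ref{SemiStab} in one dimension almost verbatim, since the two-dimensional bilinear form defined in (\ref{bilinear+}) is manifestly symmetric in $w$ and $v$. First I would observe that
\bqs
A(w,v)=A(v,w),
\eqs
which follows immediately from inspection of (\ref{bilinear+}): the volume term $\iint \nabla w\cdot\nabla v\,dxdy$ is symmetric, and each of the two interface sums contains the symmetric combination $\{w_x\}[v]+\{v_x\}[w]$ or $\{w_y\}[v]+\{v_y\}[w]$. This is the multi-dimensional analog of (\ref{sym}) and is the only structural fact I need beyond the scheme itself.

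Next I would substitute appropriate test functions into the summed formulation (\ref{FPDG1D+}). Taking $\phi=u_h\in Q_h$ in (\ref{FPDG1D+}a) gives
\bqs
\frac{1}{2}\frac{d}{dt}\iint_\Omega u_h^2\,dxdy=(u_{ht},u_h)=-A(q_h,u_h),
\eqs
while taking $\psi=q_h\in Q_h$ in (\ref{FPDG1D+}b) gives
\bqs
\iint_\Omega q_h^2\,dxdy=(q_h,q_h)=A(u_h,q_h).
\eqs
By the symmetry of $A$ just noted, $A(q_h,u_h)=A(u_h,q_h)$, so combining the two identities yields
\bqs
\frac{1}{2}\frac{d}{dt}\iint_\Omega u_h^2\,dxdy=-\iint_\Omega q_h^2\,dxdy\leq 0,
\eqs
which is exactly the claim.

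There is really no obstacle here: the only thing that needed verification was that the two-dimensional bilinear form retains the symmetry that made the one-dimensional argument work, and a quick look at (\ref{bilinear+}) confirms this. The fact that the central numerical fluxes $\{q_{hx}\}$, $\{q_h\}$, $\{u_{hx}\}$, $\{u_h\}$ (and their $y$-direction counterparts) contribute edge terms in the symmetric form $\{w_x\}[v]+\{v_x\}[w]$ is precisely the design feature that allows the penalty-free scheme to be stable, and it makes the proof a one-line computation once the summed weak form (\ref{FPDG1D+}) has been derived from the elementwise equations (\ref{FPDGCell2D}).
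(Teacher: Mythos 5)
Your proof is correct and follows exactly the route the paper intends: the paper omits the two-dimensional proof precisely because it is the verbatim analog of Theorem \ref{SemiStab}, namely taking $\phi=u_h$ and $\psi=q_h$ in (\ref{FPDG1D+}) and invoking the symmetry of the bilinear form (\ref{bilinear+}). Nothing further is needed.
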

% \subsection{A priori error estimates}
 In order to obtain the error estimate for  DG scheme (\ref{FPDG1D+}) on rectangular meshes, we follow \cite{L15} extending the one-dimensional projection to multi-dimension by taking a tensor product of $2$ one-dimensional projections as
\bqs%\label{mdproj}
\Pi w= P^{(x)} \otimes P^{(y)} w,
\eqs
where the superscripts indicate the application of one-dimensional projection operator.

We recall the following result established in \cite{LHLY17}.
\begin{lem}\label{opbnd}
For $k \geq 1$ and $\eta \in Q_h$, the linear functional $w \rightarrow A(\Pi w-w, \eta)$ is continuous on $H^{k+2}(\Omega)$ and
\begin{align*}
& |A(\Pi w-w, \eta)|\leq C h^{k+2}|w|_{k+2} \|\eta\|,\\
& \|\Pi w-w\| \leq Ch^{k+1}|w|_{k+1},
\end{align*}
where $C$ is a constant independent of $h$.
\end{lem}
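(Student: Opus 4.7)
\emph{Plan.} I would split the proof into the two estimates and reduce each to one-dimensional facts via the tensor structure $\Pi=P^{(x)}\otimes P^{(y)}$. For the $L^2$ estimate, the plan is to decompose
\[
\Pi w - w = P^{(x)}\bigl((P^{(y)}-I)w\bigr) + (P^{(x)}-I)w,
\]
use the slicewise $L^2$-stability of $P^{(x)}$ (an immediate consequence of its 1D construction), and apply the 1D bound of Lemma \ref{ProjErr} in the appropriate variable to each term. This at once gives $\|\Pi w - w\|\le Ch^{k+1}|w|_{k+1}$.

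For the bilinear estimate, the key structural observation is that $A$ in (\ref{bilinear+}) splits additively as $A=A_x+A_y$, with $A_x$ collecting every $x$-derivative and every $x$-interface contribution. Moreover, $A_x$ acts slicewise: for $u,v\in Q_h$,
\[
A_x(u,v)=\int_a^b A^{1D}\bigl(u(\cdot,y),v(\cdot,y)\bigr)\,dy,
\]
where $A^{1D}$ is the 1D form of Section 2. Since $\eta\in Q_h$, the slice $\eta(\cdot,y)$ belongs to $V_h^k$ for every $y$, so the 1D orthogonality (\ref{AA}) yields $A_x(P^{(x)}g-g,\eta)=0$ for every smooth $g$. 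Writing $\Pi w-w=(P^{(x)}-I)P^{(y)}w+(P^{(y)}-I)w$ and applying this identity with $g=P^{(y)}w$ gives
\[
A_x(\Pi w - w,\eta)=A_x\bigl((P^{(y)}-I)w,\,\eta\bigr),
\]
and a symmetric identity holds for $A_y$. This reduces the problem to bounding $A_x(\xi,\eta)$ with $\xi:=(P^{(y)}-I)w$, and similarly in $y$.

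The main obstacle is showing $|A_x(\xi,\eta)|\le Ch^{k+2}|w|_{k+2}\|\eta\|$. Because $\xi$ is smooth in $x$ (the $y$-projection introduces no $x$-jumps), one has $[\xi]_{x_{i+1/2}}=[\xi_x]_{x_{i+1/2}}=0$, so $A_x(\xi,\eta)$ collapses to the bulk integral $\iint \xi_x\eta_x\,dxdy$ plus a single surviving interface contribution $\sum_i\int \xi_x[\eta]_{x_{i+1/2}}\,dy$. A naive Cauchy--Schwarz combined with the inverse inequality $\|\eta_x\|\le Ch^{-1}\|\eta\|$ produces only $O(h^k)$, one power short of the claim. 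The extra factor of $h$ must be extracted from the fine structure of $P^{(y)}$: its $L^2$-orthogonality (\ref{pbproj}a) against polynomials of degree $\le k-2$ in $y$, combined with the interface conditions (\ref{pbproj}b)--(\ref{pbproj}c) that enforce $\{\xi\}_{y_{j+1/2}}=\{\xi_y\}_{y_{j+1/2}}=0$. Expanding $\eta_x$ in the Legendre basis in $y$ on each cell annihilates the modes of degree $\le k-2$ when paired with $\xi_x$, while the residual top modes and boundary traces are controlled by a Bramble--Hilbert-type argument in $y$ that supplies the missing $h$. The symmetric estimate handles $A_y$ and delivers the full bound; the detailed technical execution of this program is carried out in \cite{LHLY17}, which is the reference invoked here.
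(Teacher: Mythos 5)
The paper does not actually prove Lemma \ref{opbnd}: it is recalled verbatim from \cite{LHLY17}, so there is no internal argument to compare against. Your proposal is best read as a reconstruction of the proof in that reference, and its architecture is sound: the identity $\Pi w-w=(P^{(x)}-I)P^{(y)}w+(P^{(y)}-I)w$, the splitting $A=A_x+A_y$ with $A_x$ acting slicewise so that the 1D orthogonality (\ref{AA}) kills $A_x\bigl((P^{(x)}-I)P^{(y)}w,\eta\bigr)$, the observation that $\xi=(P^{(y)}-I)w$ carries no $x$-jumps (since $P^{(y)}$ commutes with both $\partial_x$ and evaluation in $x$), and the correct diagnosis that Cauchy--Schwarz plus an inverse inequality loses one power of $h$ are all accurate and match the structure of the argument in \cite{L15, LHLY17}. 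The one place where you are hand-waving rather than proving is precisely the step that makes the lemma nontrivial: the recovery of the extra factor of $h$ in $|A_x(\xi,\eta)|\leq Ch^{k+2}|w|_{k+2}\|\eta\|$. Your description (Legendre annihilation of low modes plus ``a Bramble--Hilbert-type argument in $y$'') gestures at the right mechanism, but the actual argument in the reference hinges on first showing that the functional $w\mapsto A_x\bigl((P^{(y)}-I)w,\eta\bigr)$ \emph{vanishes identically} whenever $w$ is locally a polynomial of degree $k+1$, and only then invoking Bramble--Hilbert together with boundedness on $H^{k+2}$; without establishing that vanishing property your sketch does not yet yield the claimed order. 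Since you explicitly defer this step to \cite{LHLY17} --- the same citation the paper itself relies on --- your proposal is consistent with the paper's treatment, but it is not a self-contained proof, and a reader should be aware that the superconvergence kernel of the lemma remains outsourced.
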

We are now ready to state the a priori error estimate result for the two-dimensional case.
 \begin{thm}\label{thmUL2} Let $u_h$ be the numerical solution to the DG scheme (\ref{FPDGCell2D}) and $u$ be the smooth solution to problem (\ref{3.1}),  then
\bq\label{pbddgenormMD}
\|  u(\cdot, t)-u_h(\cdot, t) \| \leq Ch^{k+1},
\eq
for $0 \leq t \leq T$, where $C$ depends on $\sup_{t\in [0, T]}\|u_t(\cdot, t)\|_{k+1}$, and linearly on $T$,  but independent of $h$.
\end{thm}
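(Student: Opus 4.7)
The plan is to follow the template of the one-dimensional proof (Theorem 2.2), but to replace the exact orthogonality $A(Pw-w,v)=0$ by the quasi-orthogonality bound in Lemma \ref{opbnd}. First, by consistency the exact pair $(u,q=-\Delta u)$ satisfies (\ref{FPDG1D+}) tested against any $(\phi,\psi)\in Q_h\times Q_h$; subtracting from (\ref{FPDG1D+}) gives the error system
\begin{align*}
((u-u_h)_t,\phi) &= -A(q-q_h,\phi), \\
(q-q_h,\psi) &= A(u-u_h,\psi).
\end{align*}
I would split the errors as $e_1=\Pi u-u_h$, $\epsilon_1=\Pi u-u$, $e_2=\Pi q-q_h$, $\epsilon_2=\Pi q-q$, choose $\phi=e_1$ and $\psi=e_2$, and add the two identities. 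Using the symmetry of $A$ (which holds as in (\ref{sym})), the terms $A(e_2,e_1)$ and $A(e_1,e_2)$ cancel, leaving
\begin{equation*}
\tfrac12\tfrac{d}{dt}\|e_1\|^2+\|e_2\|^2 = (\epsilon_{1t},e_1)+(\epsilon_2,e_2)+A(\epsilon_2,e_1)-A(\epsilon_1,e_2).
\end{equation*}

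The substantive difference from the 1D case is that the last two terms do not vanish. This is exactly where Lemma \ref{opbnd} is needed: it gives $|A(\epsilon_2,e_1)|\le C h^{k+2}|q|_{k+2}\|e_1\|$ and $|A(\epsilon_1,e_2)|\le C h^{k+2}|u|_{k+2}\|e_2\|$. Together with the $L^2$-projection estimates $\|\epsilon_{1t}\|\le Ch^{k+1}|u_t|_{k+1}$ and $\|\epsilon_2\|\le Ch^{k+1}|q|_{k+1}\le Ch^{k+1}|u|_{k+3}$, Cauchy--Schwarz and Young's inequality absorb the $\|e_2\|$-linear contributions into the dissipative $\|e_2\|^2$ on the left, producing
\begin{equation*}
\tfrac12\tfrac{d}{dt}\|e_1\|^2+\tfrac12\|e_2\|^2 \le C_1 h^{k+1}\|e_1\| + C_2 h^{2(k+1)},
\end{equation*}
where $C_1,C_2$ depend on the stated Sobolev norms of $u$ and $u_t$. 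Note the $A(\epsilon_2,e_1)$ contribution is of order $h^{k+2}$, which is absorbed into the $h^{k+1}$ term without deteriorating the rate.

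Discarding the nonnegative $\|e_2\|^2$ term and setting $B(t)=\|e_1(\cdot,t)\|/h^{k+1}$, the inequality reduces to $B\,dB/dt\le C(B+1)$, exactly the scalar ODE treated in the proof of Theorem \ref{ThmL2Err}. Integrating via the convex increasing function $G(s)=s-\ln(s+1)$ and using the initial bound $\|e_1(\cdot,0)\|\le Ch^{k+1}$ coming from the accuracy of the $L^2$ projection of $u_0$, I obtain $\|e_1(\cdot,t)\|\le C(1+T)h^{k+1}$ on $[0,T]$. The triangle inequality $\|u-u_h\|\le\|\epsilon_1\|+\|e_1\|$ combined with $\|\epsilon_1\|\le Ch^{k+1}|u|_{k+1}$ from Lemma \ref{opbnd} then yields (\ref{pbddgenormMD}).

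The only conceptual obstacle, compared to the 1D argument, is verifying that the non-vanishing quasi-orthogonality error from Lemma \ref{opbnd} is both (i) of optimal order $h^{k+2}$, so it does not spoil $h^{k+1}$ convergence, and (ii) controllable by $\|e_1\|$ and $\|e_2\|$ rather than by derivative norms of the numerical errors; both points are supplied directly by the statement of Lemma \ref{opbnd}, so no inverse inequality is required and the remainder is a routine adaptation of the 1D argument to the tensor-product projection $\Pi$.
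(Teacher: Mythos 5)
Your proposal is correct and follows essentially the same route as the paper: the same error splitting via the tensor-product projection $\Pi$, the same choice of test functions $\phi=e_1$, $\psi=e_2$, the invocation of Lemma \ref{opbnd} to bound the non-vanishing terms $A(\epsilon_2,e_1)$ and $A(\epsilon_1,e_2)$ at order $h^{k+2}$, and the reduction to the scalar differential inequality already handled in Theorem \ref{ThmL2Err}. The only cosmetic difference is that you retain $\tfrac12\|e_2\|^2$ on the left after Young's inequality while the paper absorbs the full $\|e_2\|^2$; both are equally valid.
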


\begin{proof}
By consistency of the DG scheme (\ref{FPDG1D+}),  we have
\bqs%\label{FPDGExactMD}
\begin{aligned}
(u_t, \phi) = & - A(q,\phi), \quad \forall \phi \in Q_h,\\
(q, \psi) = & A(u,\psi), \quad \forall \psi \in Q_h,\\
\end{aligned}
\eqs
where $u$ is the exact solution to (\ref{3.1}) with $q=-\Delta u$. Upon  subtraction of this from (\ref{FPDG1D+}), we have
\bq\label{FPDGErrSysMD}
\begin{aligned}
((u-u_h)_t, \phi) = & - A(q-q_h,\phi),\\
(q-q_h, \psi) = & A(u-u_h,\psi).
\end{aligned}
\eq
Denote
\bqs
\begin{aligned}
e_1= & \Pi u-u_h, \quad \epsilon_1= \Pi u-u, \\
e_2= & \Pi q-q_h, \quad \epsilon_2= \Pi q-q, \\
\end{aligned}
\eqs
take $\phi=e_1$ and $\psi=e_2$ in (\ref{FPDGErrSysMD}),
%we obtain
%\begin{subequations}\label{FPDGErrPlugMD}
%\begin{align}
%(\partial_t e_1, e_1) = & (\partial_t \epsilon_1, e_1) + A(\epsilon_2, e_1) - A(e_2,e_1),\\
%(e_2, e_2) = & (\epsilon_2, e_2)- A(\epsilon_1, e_2) + A(e_1,e_2).
%\end{align}
%\end{subequations}
%Summation of (\ref{FPDGErrPlugMD}a) and (\ref{FPDGErrPlugMD}b) gives
to obtain
\bqs
\ba
\frac{1}{2} \frac{d}{dt}\|e_1\|^2 + \|e_2\|^2 = ( \epsilon_{1t}, e_1) + (\epsilon_2, e_2) + A(\epsilon_2, e_1)- A(\epsilon_1, e_2).
\ea
\eqs
By  Schwartz's inequality and Lemma \ref{opbnd}  we have
\bqs
\ba
\frac{1}{2} \frac{d}{dt}\|e_1\|^2 + \|e_2\|^2 & \leq   \|\epsilon_{1t}\|\| e_1\| + \|\epsilon_2\|\|e_2\|+Ch^{k+2}\left(|q|_{k+2}\|e_1\|+|u|_{k+2}\|e_2\|\right) \\
& \leq C \left( |u_t|_{k+1}+|q|_{k+2}h  \right) h^{k+1}\|e_1\| + C\left( |q|_{k+1} +|u|_{k+2}h\right)h^{k+1} \|e_2\| \\
&\leq C_1 h^{k+1} (\|e_1\| + h^{k+1}) + \|e_2\|^2,
\ea
\eqs
where { $C_1=\max \{C\sup_{t\in [0,T]}\left( |u_t|_{k+1}+|q|_{k+2}h  \right), \frac{C^2}{4}\sup_{t\in [0,T]}\left( |q|_{k+1} +|u|_{k+2}h\right)^2\}$.}
%where we have used the estimates in  Lemma \ref{opbnd}, including
%\bq\label{ProjErrAppMD}
%\begin{aligned}
%\|\partial_t \epsilon_1\|\leq C h^{k+1} |u_t|_{k+1},\quad
%\|\epsilon_2\| \leq & Ch^{k+1} |q|_{k+1};
%\end{aligned}
%\eq
%and
%$C_1=C \left( |\partial_t u|_{k+1}+|q|_{k+2}h  \right)$ and $C_2=\frac{C^2}{4}\left( |q|_{k+1} +|u|_{k+2}h\right)^2$.
Following the same analysis as that in Theorem \ref{ThmL2Err}, we obtain the estimate for $\|e_1\|$, further (\ref{pbddgenormMD}) as desired.
\end{proof}
\subsection{ Time discretization}
Let $(u^n_h, q_h^n)$ denote the approximation to $(u_h, q_h)(\cdot, t_n)$, where $t_n=n\Delta t$ with $\Delta t$ being the time step. We consider a class of time stepping methods in terms of a parameter $\theta \in [1/2, 1]$:  find $(u_h^{n}, q_h^{n}) \in Q_h \times Q_h$ such that
for all $\phi, \ \psi \in Q_h$,
\begin{subequations}\label{FPDGFull+}
\begin{align}
    \left(\frac{u_h^{n+1} - u^n_h}{\Delta t},  \phi \right) = & - A(q_h^{n+\theta},\phi),\\
   ( q_h^{n},  \psi ) = & A(u_h^{n},\psi),
\end{align}
\end{subequations}
where the notation $v^{n+\theta} := (1-\theta) v^n + \theta v^{n+1}$ is used.  Similar to the one-dimensional case, we have the following stability result.
\begin{thm}\label{FullStab+}($L^2$-Stability).
For $\frac{1}{2} \leq \theta \leq 1$, the fully discrete DG scheme (\ref{FPDGFull+})  is unconditionally $L^2$ stable. Moreover,
$$
\|u_h^{n+1}\|^2 \leq \|u_h^n\|^2 -2\Delta t \| (1-\theta) q^n_h + \theta q^{n+1}_h \|^2
$$
holds for any $\Delta t>0$.
\end{thm}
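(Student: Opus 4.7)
The plan is to mirror the one-dimensional argument from Theorem \ref{FullStab}, exploiting that the multi-dimensional bilinear form $A(\cdot,\cdot)$ defined in (\ref{bilinear+}) is again symmetric: swapping $w$ and $v$ in each volume integral $\iint \nabla w\cdot\nabla v$ and in each interface pairing $\{w_x\}[v]+\{v_x\}[w]$, $\{w_y\}[v]+\{v_y\}[w]$ leaves the expression invariant. This symmetry is the only nontrivial algebraic ingredient, and it holds verbatim in the tensor-product setting.

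First, I would note that since (\ref{FPDGFull+}b) holds for $u_h^n$ and for $u_h^{n+1}$, taking the convex combination gives
\begin{equation*}
(q_h^{n+\theta}, \psi) = A(u_h^{n+\theta}, \psi), \qquad \forall \psi \in Q_h.
\end{equation*}
Next, test (\ref{FPDGFull+}a) with $\phi = u_h^{n+\theta}$ and the identity just derived with $\psi = q_h^{n+\theta}$, then add. By symmetry of $A$ the two bilinear terms cancel, yielding
\begin{equation*}
\left(\frac{u_h^{n+1}-u_h^n}{\Delta t},\, u_h^{n+\theta}\right) + \|q_h^{n+\theta}\|^2 = 0.
\end{equation*}

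Then I would use the algebraic identity
\begin{equation*}
u_h^{n+\theta} = \tfrac{1}{2}(u_h^{n+1}+u_h^n) + \left(\theta-\tfrac{1}{2}\right)(u_h^{n+1}-u_h^n)
\end{equation*}
to rewrite the time-difference inner product, producing
\begin{equation*}
\|u_h^{n+1}\|^2 - \|u_h^n\|^2 + 2\Delta t\,\|q_h^{n+\theta}\|^2 = (1-2\theta)\,\|u_h^{n+1}-u_h^n\|^2.
\end{equation*}

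Finally, for $\theta \in [1/2,1]$ the right-hand side is non-positive, giving
\begin{equation*}
\|u_h^{n+1}\|^2 \leq \|u_h^n\|^2 - 2\Delta t\,\|q_h^{n+\theta}\|^2
\end{equation*}
with no restriction on $\Delta t$, which is exactly the claim. I do not anticipate a real obstacle here: unlike the case $\theta < 1/2$ in one dimension, no inverse inequality or CFL-type bound is needed, and the only substantive fact beyond straightforward manipulation is the symmetry of the two-dimensional bilinear form (\ref{bilinear+}). The proof therefore reduces to a short calculation, and the same argument extends in principle to $d\geq 3$ by adding one interface sum per coordinate direction.
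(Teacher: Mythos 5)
Your proof is correct and follows essentially the same route as the paper: the paper omits the proof of Theorem \ref{FullStab+}, stating only that it is "similar to the one-dimensional case," and your argument is precisely the proof of Theorem \ref{FullStab} (the $\theta\ge 1/2$ branch) transplanted to the bilinear form (\ref{bilinear+}), whose symmetry you correctly identify as the only ingredient needing verification in the multi-dimensional setting.
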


In virtue of Lemma \ref{opbnd} and the techniques in Theorem \ref{thmErrFull}, we can obtain the error estimates for the full DG scheme (\ref{FPDGFull+})
on rectangular meshes without additional difficulty.
\begin{thm}\label{thmErrFull+}
Let $u^n_h$ be the numerical solution to the fully-discrete DG scheme (\ref{FPDGFull+}) with $\frac{1}{2} \leq \theta  \leq 1$, and $u$ be the smooth solution to problem (\ref{3.1}), then
\bqs
\|  u(x,t^n)-u^n_h \| \leq C \left ( h^{k+1} + \left( \theta- 1/2\right) \Delta t + (\Delta t)^2 \right),\\
\eqs
where $C$ depends on $\sup_{t \in [0,T]}\|u_t\|_{k+1}$, $\sup_{t\in [0,T]} \|u_{tt}(\cdot, t)\|$,  $\sup_{t\in [0,T]} \|u_{ttt}(\cdot, t)\|$ and linearly on $T$, but independent of $h, \Delta t$.
\end{thm}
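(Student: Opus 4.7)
The plan is to mimic the one-dimensional proof of Theorem~\ref{thmErrFull} almost verbatim, replacing the one-dimensional projection $P$ by the tensor-product projection $\Pi$, and replacing the exact orthogonality identity~(\ref{AA}) by the quantitative bilinear-form estimate supplied by Lemma~\ref{opbnd}. Concretely, evaluating the consistency of~(\ref{FPDG1D+}) at $t=t^{n+\theta}$ and Taylor-expanding
\begin{equation*}
u_t^{n+\theta}=\frac{u^{n+1}-u^n}{\Delta t}+F(n,x,y,t,\theta),
\end{equation*}
with the same $L^2$-bound on $F$ as in the one-dimensional case, namely
\begin{equation*}
\|F(n,\cdot,t,\theta)\|\leq \left(\theta-\tfrac12\right)\Delta t\sup_{t\in[0,T]}\|u_{tt}(\cdot,t)\|+(\Delta t)^2\sup_{t\in[0,T]}\|u_{ttt}(\cdot,t)\|,
\end{equation*}
and subtracting~(\ref{FPDGFull+}) yields an error system formally identical to (\ref{FPDGETnSub}).

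Next, I would split the errors via $\Pi$: set $e_1^n=\Pi u^n-u_h^n$, $\epsilon_1^n=\Pi u^n-u^n$ and analogously for $q$. Testing with $\phi=e_1^{n+\theta}$ and $\psi=e_2^{n+\theta}$ and summing, the only real departure from the one-dimensional argument is that the cross terms $A(\epsilon_2^{n+\theta},e_1^{n+\theta})-A(\epsilon_1^{n+\theta},e_2^{n+\theta})$ no longer vanish identically; by Lemma~\ref{opbnd} each of them is dominated by $Ch^{k+2}|u|_{k+2}\bigl(\|e_1^{n+\theta}\|+\|e_2^{n+\theta}\|\bigr)$, i.e.\ strictly higher order than $h^{k+1}$. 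Using the identity $v^{n+\theta}=\tfrac12(v^{n+1}+v^n)+(\theta-\tfrac12)(v^{n+1}-v^n)$ on the time-difference term and discarding the resulting non-negative $(2\theta-1)\|e_1^{n+1}-e_1^n\|^2/\Delta t$ contribution (allowed because $\theta\geq\tfrac12$), together with absorbing $\tfrac12\|e_2^{n+\theta}\|^2$ into the left-hand side, produces
\begin{equation*}
\frac{\|e_1^{n+1}\|^2-\|e_1^n\|^2}{2\Delta t}\leq C\Bigl(h^{k+1}+(\theta-\tfrac12)\Delta t+(\Delta t)^2\Bigr)\bigl(\|e_1^{n+1}\|+\|e_1^n\|\bigr)+Ch^{2(k+1)},
\end{equation*}
where the constant depends on the same norms of $u$ as in Theorem~\ref{thmErrFull}.

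Finally, I would set $a_n:=\|e_1^n\|/\eta$ with $\eta:=h^{k+1}+(\theta-\tfrac12)\Delta t+(\Delta t)^2$ and invoke Lemma~\ref{anEsti} (with $\tau=2\Delta t$ and an appropriate $\alpha$) exactly as in the one-dimensional proof. The initialization $\|e_1^0\|\leq Ch^{k+1}$ follows from the 2D projection bound of Lemma~\ref{opbnd} combined with the $L^2$-projection choice of $u_h^0$. This yields $\|e_1^n\|\leq C(1+T)\eta$, and a triangle inequality with $\|\epsilon_1^n\|\leq Ch^{k+1}|u|_{k+1}$ from Lemma~\ref{opbnd} delivers the claimed estimate.

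The principal obstacle, and the only place where the 2D proof genuinely differs from the 1D one, is the control of the non-vanishing cross terms $A(\Pi u-u,e_1)$ and $A(\Pi q-q,e_2)$. In 1D they are identically zero by~(\ref{AA}), whereas in 2D one must rely on the sharper bound $|A(\Pi w-w,\eta)|\leq Ch^{k+2}|w|_{k+2}\|\eta\|$ of Lemma~\ref{opbnd}; it is precisely the extra factor $h$ in this bound (beyond the naive $L^2$ projection error $h^{k+1}$) that preserves the optimal convergence rate. All other pieces — the Taylor remainder bound on $F$, the absorption of $\|e_2^{n+\theta}\|^2$, and the discrete Gronwall step via Lemma~\ref{anEsti} — carry over unchanged.
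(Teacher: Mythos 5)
Your proposal is correct and follows exactly the route the paper intends: the paper omits a written proof of Theorem~\ref{thmErrFull+} and simply states that it follows from Lemma~\ref{opbnd} combined with the techniques of Theorem~\ref{thmErrFull}, which is precisely the argument you carry out. Your identification of the one genuine change --- the cross terms $A(\Pi w-w,\cdot)$ no longer vanish and must instead be absorbed using the $O(h^{k+2})$ bilinear-form bound of Lemma~\ref{opbnd}, exactly as in the semi-discrete two-dimensional proof of Theorem~\ref{thmUL2} --- is the key point, and the rest of the details check out.
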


%\begin{thm}
%Let $(u^n, q^n)$ be the numerical solution to the fully-discrete penalty-free DG scheme (\ref{FPDGCellFull}) satisfying the condition in Theorem \ref{FullStab}, and $U$ be the smooth solution to problem (\ref{Beam}) with $Q=-\Delta U$,  then the error estimate in $L^2$ norm
%\begin{subequations}\label{pbddgFullUL2}
%\begin{align}
%\|  U(x,t^n)-u^n \| \leq & C \left ( h^{k+1} + \left| \theta- \frac{1}{2}\right| \Delta t + \left( \frac{\theta}{2}+\frac{1}{6} \right)(\Delta t)^2 \right),\\
%\|  Q(x,t^n)-q^n \| \leq & C \left ( h^{k+1} + \left| \theta- \frac{1}{2}\right| \Delta t + \left( \frac{\theta}{2}+\frac{1}{6} \right)(\Delta t)^2 \right),
%\end{align}
%\end{subequations}
%%\bq
%%\label{pbddgFullQL2}
%%\|  Q-q^{n+1} \| \leq Ch^{k+1},
%%\eq
%where $C$ depends is independent of $h, \Delta t$. \\
%\end{thm}
\section{Extensions}
In this section, we discuss several extensions regarding the more general equation, non-homogeneous boundary conditions,   and an application to a nonlinear problem.
\subsection{General 4th order linear operaor}
We consider the general 4th order time-dependent PDEs of form (\ref{Beam}), with
\begin{equation}\label{opgen}
Lu=\sum_{m=0}^2 a_m\partial_x^{2m}u.
\end{equation}
It is known that the initial boundary value problem with periodic boundary conditions is well-posed  \cite{GKO95} if and only if there exists a constant $K$ such that
$$
a_0-a_1\xi^2 +a_2 \xi^4 \leq M
$$
holds for any real number $\xi$.  Hence, the problem is well-posed if $a_2<0$, accordingly we have $M=a_0-\frac{a_1^2}{4a_2}$ for $a_1 \leq 0$ and
$M=a_0$ for $a_1> 0$.  The case of  more interest is $a_1 \leq 0$, and will be kept in mind in the following discussion, though the scheme can also be used for  $a_1>0$.

We construct our DG scheme based on the following reformulation
\begin{equation}\label{BeamSys1D+}
\left \{
\begin{array}{rl}
    u_t = & \sqrt{-a_2} \left( \partial_x^2 +\frac{a_{1}}{2a_2} \right)q +f,\\
    q = & -\sqrt{-a_2} \left( \partial_x^2 +\frac{a_{1}}{2a_2} \right)u,
\end{array}
\right.
\end{equation}
where
$$
f =Mu,  \quad M:=\left(a_0-\frac{a_{1}^2}{4a_2} \right).
$$
The corresponding DG scheme may be given by
\begin{subequations}\label{FPDG1D+Ex}
\begin{align}
(u_{ht}, \phi) = &- \tilde A(q_h,\phi) +M(u_h, \phi),\\
(q_h, \psi) = & \tilde A(u_h,\psi),
\end{align}
\end{subequations}
where
$$
\tilde A(w,v)=\sqrt{-a_2} A(w, v)+\frac{a_1}{2\sqrt{-a_2}} (w, v)
$$
with $A(\cdot, \cdot)$ defined in (\ref{bilinear}).
%This semi-discrete DG scheme is stable in the sense that the scheme still preserves the stability inequality
%$$
%\|u(\cdot, t)\| \leq \|u(\cdot, 0)\|e^{Kt},
%$$
%which holds for the continuous case.
Such semi-discrete DG scheme can be shown $L^2$ stable, and optimally convergent. The result is summarized in the following.

%\begin{thm}\label{SemiStab+} ($L^2$-Stability). The numerical solution $u_h$ to (\ref{FPDG1D+Ex}) satisfies  the following stability property:
%\bqs
%\|u_h(\cdot, t)\| \leq \|u_h(\cdot, 0)\|e^{Kt}, \quad K=a_0-\frac{a_1^2}{4a_2}.
%\eqs
%\end{thm}
\begin{thm}\label{ThmL2ErrGen} Let $u_h$ be the numerical solution to (\ref{FPDG1D+Ex}), then
\bqs
\|u_h(\cdot, t)\| \leq \|u_h(\cdot, 0)\|e^{Mt}, \quad M=a_0-\frac{a_1^2}{4a_2}.
\eqs
Assume that the exact solution $u$ to problem (\ref{Beam}) with operator $L$ defined in (\ref{opgen}) is smooth, then
\bq\label{pbddgenormgen}
\|  u_h(\cdot, t)-u(\cdot, t) \| \leq C h^{k+1}, \quad 0\leq t\leq T,
\eq
where $C$ depends on $\sup_{t \in [0,T]}|u_t(\cdot, t)|_{k+1}$, $\sup_{t \in [0,T]}|u(\cdot, t)|_{k+3}, \ \sup_{t\in[0,T]}|u(\cdot, t)|_{k+1}$  and $T$,  but independent of $h$.
\end{thm}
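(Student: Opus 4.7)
The plan is to mirror the one-dimensional analysis of Theorem \ref{ThmL2Err} and Theorem \ref{SemiStab}, tracking the two new terms created by the lower-order perturbation: the extra $M(u_h,\phi)$ contribution to the scheme, and the $L^2$-inner-product part $\frac{a_1}{2\sqrt{-a_2}}(\cdot,\cdot)$ buried inside $\tilde A$.

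\emph{Stability.} Observe first that $\tilde A$ inherits symmetry from $A$ and from the $L^2$ inner product, so $\tilde A(q_h,u_h)=\tilde A(u_h,q_h)$. Taking $\phi=u_h$ in (\ref{FPDG1D+Ex}a) and $\psi=q_h$ in (\ref{FPDG1D+Ex}b) and subtracting, the $\tilde A$-terms cancel and we arrive at
\begin{equation*}
\frac{1}{2}\frac{d}{dt}\|u_h\|^2 = -\|q_h\|^2 + M\|u_h\|^2 \leq M\|u_h\|^2,
\end{equation*}
from which Gronwall yields the claimed bound $\|u_h(\cdot,t)\|\leq\|u_h(\cdot,0)\|e^{Mt}$.

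\emph{Consistency and error equation.} The exact pair $(u,q)$ with $q=-\sqrt{-a_2}(\partial_x^2+\frac{a_1}{2a_2})u$ satisfies the same variational identities as $(u_h,q_h)$ (the DG discretization of $-\partial_x^2$ reproduces $A$ and the zeroth-order term is exact). Subtracting and splitting $u-u_h=\epsilon_1-e_1$, $q-q_h=\epsilon_2-e_2$ with $e_i=Pu_i-u_{ih}$, $\epsilon_i=Pu_i-u_i$ (using the one-dimensional projection $P$ from (\ref{pbproj})), I would choose test functions $\phi=e_1$, $\psi=e_2$ and add the two resulting equations to obtain
\begin{equation*}
\frac{1}{2}\frac{d}{dt}\|e_1\|^2+\|e_2\|^2
= (\epsilon_{1t},e_1)+(\epsilon_2,e_2)+\tilde A(\epsilon_2,e_1)-\tilde A(\epsilon_1,e_2)+M\|e_1\|^2-M(\epsilon_1,e_1).
\end{equation*}

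\emph{Control of the bilinear-form residuals.} This is the step where the extension to general $L$ differs from Theorem \ref{ThmL2Err}. Since $A(Pw-w,v)=0$ for any $v\in V_h^k$ by Lemma \ref{projexist}, the stiffness part of $\tilde A(\epsilon_i,v)$ vanishes and only the mass part survives:
\begin{equation*}
\tilde A(\epsilon_i,v)=\frac{a_1}{2\sqrt{-a_2}}(\epsilon_i,v),\qquad i=1,2.
\end{equation*}
So $\tilde A(\epsilon_2,e_1)-\tilde A(\epsilon_1,e_2)$ is bounded cleanly by $C(\|\epsilon_2\|\,\|e_1\|+\|\epsilon_1\|\,\|e_2\|)$ without touching any jump or interface term. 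This is the key observation that keeps the penalty-free structure intact under the lower-order perturbation.

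\emph{Finishing by Gronwall.} Applying Cauchy--Schwarz and Young's inequality to absorb $\|e_2\|^2$ on the left, and invoking the projection estimate (\ref{ProjErr1D}) together with $\|\epsilon_2\|\le Ch^{k+1}(|u|_{k+3}+|u|_{k+1})$ (since $q$ is a second-order differential combination of $u$) and $\|\epsilon_{1t}\|\le Ch^{k+1}|u_t|_{k+1}$, one arrives at a differential inequality of the form
\begin{equation*}
\frac{d}{dt}\|e_1\|^2 \leq C_1\|e_1\|^2 + C_2 h^{2(k+1)},
\end{equation*}
with constants depending on the stated norms of $u$ and $u_t$. Unlike Theorem \ref{ThmL2Err}, the $M\|e_1\|^2$ term here forces the classical Gronwall argument (with exponential dependence on $T$) rather than the sharper logarithmic argument based on $G(s)=s-\ln(s+1)$. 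Combined with the initial bound $\|e_1(\cdot,0)\|\le Ch^{k+1}$ (same $L^2$-projection argument as in (\ref{B0bnd})) and the triangle inequality $\|u-u_h\|\le\|e_1\|+\|\epsilon_1\|$, this delivers (\ref{pbddgenormgen}).

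The main obstacle I expect is really bookkeeping: verifying that the DG discretization of $-\sqrt{-a_2}(\partial_x^2+\tfrac{a_1}{2a_2})$ with the central fluxes produces exactly $\tilde A$ so that consistency holds verbatim, and then tracking the two new error contributions ($M$-term and the mass part of $\tilde A$) without losing the cancellation structure that makes the penalty-free analysis work.
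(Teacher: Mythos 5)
Your proposal is correct and follows essentially the same route as the paper's proof: the same cancellation of the $\tilde A$-terms for stability, the same error decomposition with the projection $P$, the same observation that $A(Pw-w,v)=0$ reduces $\tilde A(\epsilon_i,\cdot)$ to its mass part, and the same classical Gronwall argument with exponential dependence on $T$. (The only quibble is that the $\tilde A$-terms cancel upon \emph{adding} the two tested equations, not subtracting, but your displayed identity is the correct one.)
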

\begin{proof} Firstly, the stability result follows from
$$
\frac{1}{2}\frac{d}{dt} \|u_h\|^2 +\|q_h\|^2 =M\|u_h\|^2,
$$
which is obtained by adding two equations in  (\ref{FPDG1D+Ex}) with   $\phi=u_h$ and $\psi=q_h$. Here the terms involving $A(\cdot, \cdot)$ cancel out due to the symmetry property.

We proceed to carry out the error estimate. The consistency of the DG method (\ref{FPDG1D+Ex}) ensures that the exact solution $u$ and $q$ of (\ref{BeamSys1D+}) also satisfy
\bq\label{FPDGExExact}
\begin{aligned}
(u_t, \phi) = & -\tilde A(q,\phi)+M(u, \phi),\\
(q, \psi) = & \tilde A(u,\psi),\\
\end{aligned}
\eq
for all $\phi \in V_h^k, \psi \in V_h^k$.
Subtracting (\ref{FPDG1D+Ex}) from (\ref{FPDGExExact}), we obtain the error  system
\bq\label{FPDGErrSysgen}
\begin{aligned}
((u-u_h)_t, \phi) = & - \tilde A(q-q_h,\phi)+M(u-u_h, \phi),\\
(q-q_h, \psi) = & \tilde A(u-u_h,\psi).
\end{aligned}
\eq
Denote
\begin{equation*}
\begin{aligned}
e_1= & Pu-u_h, \quad \epsilon_1= Pu-u, \\
e_2= & Pq-q_h, \quad \epsilon_2= Pq-q, \\
\end{aligned}
\end{equation*}
and take $\phi=e_1, \psi=e_2$ in (\ref{FPDGErrSysgen}),
%we obtain
%\begin{subequations}\label{FPDGErrPluggen}
%\begin{align}
%(\partial_t e_1, e_1) = & (\partial_t \epsilon_1, e_1) + \tilde A(\epsilon_2, e_1) - \tilde A(e_2,e_1)-K(\epsilon_1, e_1)+K(e_1, e_1),\\
%(e_2, e_2) = & (\epsilon_2, e_2)- \tilde A(\epsilon_1, e_2) + \tilde A(e_1,e_2).
%\end{align}
%\end{subequations}
%Summation of (\ref{FPDGErrPluggen}a) and  (\ref{FPDGErrPluggen}b) gives
 upon summation,  we obtain
\bq\label{ErrFormulagen}
\ba
\frac{1}{2} \frac{d}{dt}\|e_1\|^2 + \|e_2\|^2 = &M\|e_1\|^2 -M(\epsilon_1, e_1)+ (\epsilon_{1t}, e_1) + (\epsilon_2, e_2) +\tilde A(\epsilon_2, e_1)- \tilde A(\epsilon_1, e_2).
\ea
\eq
By property (\ref{AA}) of the projection,  we have
\begin{equation*}
\ba
\tilde A(\epsilon_2, e_1)=& \sqrt{-a_2} A(\epsilon_2, e_1) +\frac{a_1}{2\sqrt{-a_2}}(\epsilon_2, e_1)=\frac{a_1}{2\sqrt{-a_2}}(\epsilon_2, e_1), \\
\tilde A(\epsilon_1, e_2)=& \sqrt{-a_2} A(\epsilon_1, e_2)+\frac{a_1}{2\sqrt{-a_2}}(\epsilon_1, e_2)=\frac{a_1}{2\sqrt{-a_2}}(\epsilon_1, e_2).
\ea
\end{equation*}
These when inserted into (\ref{ErrFormulagen}) upon further bounding terms on the right hand side gives
%\bq
%\label{ErrFormulaSfgen}
%\frac{1}{2} \frac{d}{dt}\|e_1\|^2 + \|e_2\|^2 \leq  \|\partial_t \epsilon_1\|\| e_1\| + \frac{1}{4}\|\epsilon_2\|^2 +\|e_2\|^2.
%\eq
%%by Cauchy-Schwarz inequality the first term of the right hand side of (\ref{ErrFormulaSf}) becomes
%%\bq\label{ErrFormulaSf1}
%%|(\partial_t \epsilon_1, e_1)| \leq \|\partial_t \epsilon_1\| \|e_1\|.
%%\eq
%%For the second term, we have the following estimate
%%\bq\label{ErrFormulaSf2}
%%|(\epsilon_2, e_2)| \leq \frac{1}{4} \|\epsilon_2\|^2 + \|e_2\|^2,
%%\eq
%Hence
\bqs%\label{ErrIneqgen}
\ba
\frac{1}{2} \frac{d}{dt}\|e_1\|^2 \leq & M\|e_1\|^2+ \left(M\|\epsilon_1\| + \| \epsilon_{1t}\|+\frac{|a_1|}{2\sqrt{-a_2}}\|\epsilon_2\| \right) \|e_1\| + \frac{1}{4} \left( \|\epsilon_2\|+\frac{|a_1|}{2\sqrt{-a_2}}\|\epsilon_1\| \right)^2 \\
%\leq & \frac{d}{dt}\|\epsilon_1\| \|e_1\| + \frac{1}{4} \|\epsilon_2\|^2 \\
%\leq & C_1 h^{k+1} \|e_1\|  + C_1 h^{2(k+1)},\\
\leq & C( \|e_1\|^2+ h^{2(k+1)}),
\ea
\eqs
%By property (\ref{ProjErr1D}), we have
%\bq\label{ProjErrAppgen}
%\begin{aligned}
%\|\epsilon_1 \|\leq & C |u|_{k+1}h^{k+1},\\
%\|\partial_t \epsilon_1 \|\leq & C |u_t|_{k+1}h^{k+1},\\
%\|\epsilon_2\| \leq & C |q|_{k+1}h^{k+1} \leq C |u|_{k+3}h^{k+1}. %  = C_0h^{k+1} \sum_{j=1}^N |U|_{k+3, I_j}^2
%\end{aligned}
%\eq
%, where $C_0$ is a constant.\\
%Then, plugging (\ref{ProjErrAppgen}) into (\ref{ErrIneqgen}), we have
%\bq\label{ErrIneq1gen}
%\ba
%\frac{1}{2} \frac{d}{dt}\|e_1\|^2 \leq & \|\partial_t \epsilon_1\| \|e_1\| + \frac{1}{4} \|\epsilon_2\|^2 \\
%\frac{1}{2} \frac{d}{dt}\|e_1\|^2 \leq & K\|e_1\|^2+C h^{k+1} \|e_1\|  + C h^{2(k+1)},\\
%\ea
%\eq
where  property (\ref{ProjErr1D}) has been used,  and $C$ depends on $M$, $\sup_{t\in[0,T]}|u_t|_{k+1}$,  $\sup_{0\leq t\leq T}|u|_{k+3}$, $\sup_{t\in[0,T]}|u|_{k+1}$, independent of $h$.
By Grownwall's inequality we have
\bqs%\label{ErrProjDGgen}
\|e_1(\cdot, t)\|^2  \leq e^{2CT}(\|e_1(\cdot, 0)\|^2+ h^{2k+2}), \quad 0\leq t \leq T,
\eqs
which together with the initial error $\|e_1(\cdot, 0)\| \leq C h^{k+1}$ yields
\bqs%\label{ErrProjDGgen}
\|e_1(\cdot, t)\|  \leq \sqrt{C^2+1} e^{CT} h^{k+1}, \quad 0\leq t \leq T.
\eqs
This  when combined with the approximation result in Lemma \ref{ProjErr} leads to (\ref{pbddgenormgen}) as desired.
\iffalse
\tr{By Grownwall's inequality we have
\bqs%\label{ErrProjDGgen}
\|e_1(\cdot, t)\|  \leq \sqrt{2C} e^{CT} h^{k+1}, \quad 0\leq t \leq T,
\eqs}
\tb{
Note that $\|e_1(\cdot, 0)\| \leq C h^{k+1}$ for the same reason as (\ref{B0bnd}), then we have
\bqs%\label{ErrProjDGgen}
\|e_1(\cdot, t)\|  \leq \sqrt{C^2+1} e^{CT} h^{k+1}, \quad 0\leq t \leq T,
\eqs}
which combined with the approximation result in Lemma \ref{ProjErr} leads to (\ref{pbddgenormgen}) as desired.
\fi
\end{proof}
In a similar fashion, we consider the 2D operator
\begin{equation}\label{2DopGen}
Lu=\sum_{m=0}^2 a_m\Delta^{m}u,
\end{equation}
with $a_2<0$,   for which we have the following reformulation
\begin{equation*}%\label{BeamSys2D+}
\left \{
\begin{array}{rl}
    u_t = & \sqrt{-a_2} \left( \Delta +\frac{a_1}{2a_2} \right)q +Mu,\\
    q = & -\sqrt{-a_2} \left(\Delta +\frac{a_1}{2a_2} \right)u.
\end{array}
\right.
\end{equation*}
The corresponding DG scheme becomes
\begin{subequations}\label{FPDG1D++Ex}
\begin{align}
(u_{ht}, \phi) = &- \tilde A(q_h,\phi) +(f(u_h), \phi),\\
(q_h, \psi) = & \tilde A(u_h,\psi),
\end{align}
\end{subequations}
where $f(u)=Mu$, and  the bilinear functional for 2D rectangular meshes becomes
$$
\tilde A(w,v)= \sqrt{-a_2} A(w, v)+\frac{a_1}{2\sqrt{-a_2}} (w, v),
$$
with $A(\cdot, \cdot)$ defined in (\ref{bilinear+}). For DG scheme (\ref{FPDG1D++Ex}),  we have  the following result.
%\begin{thm}\label{SemiStab+2D} ($L^2$-Stability). The numerical solutions $u_h$ to (\ref{FPDG1D++Ex}) have the following stability property:
%\end{thm}

\begin{thm}\label{ThmL2ErrGen2D} Let $u_h$ be the numerical solution to (\ref{FPDG1D++Ex}), then
$$
\|u_h(\cdot, t)\| \leq \|u_h(\cdot, 0)\|e^{Mt}.
$$
Assume that the exact solution $u$ to problem (\ref{Beam}) with operator $L$ defined in (\ref{2DopGen}) is smooth, then
$$
\|  u_h(\cdot, t)-u(\cdot, t) \| \leq C h^{k+1}, \quad 0\leq t\leq T,
$$
where $C$ depends on $\sup_{t \in [0,T]}\|u_t(\cdot, t)\|_{k+1}$, $\sup_{t \in [0,T]}|u(\cdot, t)|_{k+3}, \ \sup_{t\in[0,T]}|u(\cdot, t)|_{k+1}$  and $T$,  but  independent of $h$.
\end{thm}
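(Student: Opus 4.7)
The plan is to adapt the one-dimensional argument of Theorem \ref{ThmL2ErrGen} to the two-dimensional tensor-product setting of Section 3, with the role of the one-dimensional projection $P$ played by $\Pi = P^{(x)}\otimes P^{(y)}$ and the exact orthogonality $A(Pw-w,v)=0$ replaced by the $O(h^{k+2})$ consistency estimate in Lemma \ref{opbnd}. Throughout, $\tilde A$ is symmetric since it is a linear combination of the symmetric $A(\cdot,\cdot)$ from (\ref{bilinear+}) and the inner product.

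For the stability bound, I would take $\phi=u_h$ in (\ref{FPDG1D++Ex}a) and $\psi=q_h$ in (\ref{FPDG1D++Ex}b). Adding the two identities, the terms $-\tilde A(q_h,u_h)$ and $\tilde A(u_h,q_h)$ cancel by symmetry, yielding
\bqs
\frac{1}{2}\frac{d}{dt}\|u_h\|^2 + \|q_h\|^2 = M\|u_h\|^2.
\eqs
Dropping $\|q_h\|^2 \ge 0$ and applying Gr\"onwall's inequality delivers the exponential estimate $\|u_h(\cdot,t)\|\le\|u_h(\cdot,0)\|e^{Mt}$.

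For the $L^2$ error estimate, I would first invoke consistency: the smooth pair $(u,q)$ with $q=-\sqrt{-a_2}(\Delta+\frac{a_1}{2a_2})u$ satisfies (\ref{FPDG1D++Ex}) for all admissible test functions. Subtracting the scheme, and splitting the errors as $u-u_h=e_1-\epsilon_1$, $q-q_h=e_2-\epsilon_2$ with $e_1=\Pi u-u_h$, $e_2=\Pi q-q_h$, $\epsilon_1=\Pi u-u$, $\epsilon_2=\Pi q-q$, I would test with $\phi=e_1$ and $\psi=e_2$. Summing, and using the symmetry $\tilde A(e_1,e_2)=\tilde A(e_2,e_1)$ to cancel the internal bilinear contributions, I expect to reach
\bqs
\frac{1}{2}\frac{d}{dt}\|e_1\|^2 + \|e_2\|^2 = M\|e_1\|^2 - M(\epsilon_1,e_1) + (\epsilon_{1t},e_1) + (\epsilon_2,e_2) + \tilde A(\epsilon_2,e_1) - \tilde A(\epsilon_1,e_2).
\eqs

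The key step is the bilinear-form estimate. Writing $\tilde A(\epsilon_i,\eta)=\sqrt{-a_2}A(\epsilon_i,\eta)+\frac{a_1}{2\sqrt{-a_2}}(\epsilon_i,\eta)$, I would apply Lemma \ref{opbnd} to bound $|A(\epsilon_i,\eta)|\le Ch^{k+2}|\cdot|_{k+2}\|\eta\|$ together with $\|\epsilon_i\|\le Ch^{k+1}|\cdot|_{k+1}$. Cauchy--Schwarz and Young's inequality (with a small $\epsilon$-parameter to absorb the $\|e_2\|$-multipliers into the $\|e_2\|^2$ on the left) then reduce the identity to
\bqs
\frac{d}{dt}\|e_1\|^2 \le C\|e_1\|^2 + Ch^{2(k+1)},
\eqs
where $C$ depends on $M$ and the stated Sobolev norms of $u$. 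Gr\"onwall's inequality combined with the initial projection bound $\|e_1(\cdot,0)\|\le Ch^{k+1}$ yields $\|e_1(\cdot,t)\|\le Ch^{k+1}$ on $[0,T]$, and the triangle inequality with the projection error $\|\epsilon_1\|\le Ch^{k+1}|u|_{k+1}$ closes the argument.

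The only conceptual difference from the one-dimensional Theorem \ref{ThmL2ErrGen} is that exact orthogonality $A(Pu-u,v)=0$ is no longer available; in its place we must use the weaker $O(h^{k+2})$ bound of Lemma \ref{opbnd}. Since this inexact consistency still beats the target rate $O(h^{k+1})$ by an extra power of $h$, it is harmlessly absorbed by Gr\"onwall without affecting the convergence order. The only (mild) technical point is arranging Young's inequality so that the $\|e_2\|$-dependent terms on the right are absorbed by $\|e_2\|^2$ on the left before Gr\"onwall is applied; once this is done the remainder of the argument is routine.
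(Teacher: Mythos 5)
Your proposal is correct and follows exactly the route the paper intends: the paper states Theorem \ref{ThmL2ErrGen2D} without a written proof, expecting the reader to combine the argument of the one-dimensional Theorem \ref{ThmL2ErrGen} with the tensor-product projection $\Pi$ and the bilinear-form bound of Lemma \ref{opbnd}, precisely as in the proof of Theorem \ref{thmUL2}. Your observation that the exact orthogonality $A(Pw-w,v)=0$ must be replaced by the $O(h^{k+2})$ estimate, which still beats the target rate and is absorbed by Gr\"onwall, is the only substantive point, and you have it right.
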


\subsection{Non-periodic boundary conditions} As is known if one of the following homogeneous boundary conditions is imposed,
$$
u=\partial_\nu u=0;  \quad u=\Delta u=0; \quad \partial_\nu u=\partial_\nu \Delta u=0, \quad x\in \partial \Omega,
$$
where $\nu$ stands for the outward normal direction to the boundary $\partial \Omega$, then the problem
$$
u_t=-\Delta^2 u, \quad u(x, 0)=u_0(x), x\in \Omega, \quad t>0
$$
is also well-posed, and $\|u(\cdot, t)\|\leq \|u_0(\cdot)\|$ holds for $t>0$.  In practice, the boundary conditions are often non-homogeneous, for example, the above three types of boundary conditions can have the form
$$
(i) \; u=g_1, \partial_\nu u=g_2;  \quad (ii)\; u= g_1, \Delta u=g_3; \quad (iii)\; \partial_\nu u=g_2, \partial_\nu \Delta u=g_4, \quad x\in \partial \Omega,
$$
where $g_i$ are given, and can be different in these three cases. The first two may be called ``generalized Dirichlet conditions" of the first and second kind, respectively, and the third one may be called  ``generalized Neumann condition". There is no restriction to the use of mixed types of boundary conditions.
% In some articles, the ``generalized Dirichlet conditions" of the first kind is also called ``clamped" boundary condition, and the second kind is called ``hinged" boundary conditions.

Let $K$ be a computation cell such that $\partial \Omega\cap K$ is not empty, with $\nu$ still denoting the outward normal direction of $\partial \Omega\cap K$. We also denote the set of all boundary edges of $\partial \Omega\cap K$ by $\Gamma$, which is a union of all boundary edges in 2D case, and $\{x_{1/2}=a, b=x_{N+1/2}\}$ in one-dimensional case.
%We also denote by $\mathcal{E}_h^I$ the set of all internal edges and by $\mathcal{E}_h^\partial$ the set of all boundary edges of $\partial \Omega\cap K$.
%Let $K$ be a computation cell  with $\nu$ still denoting the outward normal direction of $K$. We also denote by $\mathcal{E}_h^I$ the set of all internal edges and by $\mathcal{E}_h^\partial$ the set of all boundary edges.
We can then define the boundary fluxes for all edges $e \in \Gamma$
\iffalse
\bq\label{fluxBD}
\ba
\widehat{u_h} = g_1 \quad \text{or } \; &  \widehat{u} = u_h, \\
\widehat{\partial_\nu u_h} = g_2 \quad \text{or } \; & \widehat{\partial_\nu u_h} =\frac{\beta_0}{h} [u_h]+\partial_\nu u_h, \\
\widehat{q_h} = -g_3 \quad \text{or }  \; & \widehat{q_h} = q_h, \\
\widehat{\partial_\nu q_h} =  -g_4 \quad \text{or }  \; & \widehat{\partial_\nu q} =\frac{\beta_0}{h} [q_h]+\frac{\beta_1}{h} [u_h]+\partial_\nu q_h,
\ea
\eq
with jump terms defined as
\bq
\ba\
 [u_h]= g_1 - u_h,  \quad \text{or } & [u_h]= 0;\\
[q_h]= -g_3 - q_h, \quad \text{or } & [q_h]= 0.
\ea
\eq
\fi
for case (i), (ii) and (iii), respectively:
 \begin{subequations}\label{fluxBD1}
 \begin{align}
 & \widehat{u_h} = g_1, \;
\widehat{\partial_\nu u_h} =  g_2, \\
&  \widehat{q_h} =  q_h, \;
 \widehat{\partial_\nu q} = \frac{\beta_1}{h} (g_1-u_h)+ \partial_\nu q_h;
\end{align}
\end{subequations}
 \begin{subequations}\label{fluxBD2}
  \begin{align}
& \widehat{u_h} = g_1, \;
\widehat{\partial_\nu u_h} =\frac{\beta_0}{h} (g_1-u_h)+ \partial_\nu u_h, \\
&  \widehat{q_h} =  -g_3, \;
 \widehat{\partial_\nu q} = \frac{\beta_0}{h} (-g_3-q_h) + \partial_\nu q_h;
\end{align}
\end{subequations}
\begin{subequations}\label{fluxBD3}
 \begin{align}
& \widehat{u_h} = u_h, \;
\widehat{\partial_\nu u_h} =  g_2, \\
 & \widehat{q_h} =  q_h, \;
 \widehat{\partial_\nu q} =-g_4,
 \end{align}
 \end{subequations}
where the mesh size $h=diam\{K\}$.  %boundary data are used whenever available, else we use the boundary trace of the corresponding numerical values.
The flux parameters $\beta_{0}, \ \beta_{1}$ are used to ensure the numerical convergence.
%generally there is a $w$, such that
%$w$ and $u$ satisfy the same boundary condition and
%$$
%(u-w)_t=-\Delta^2 (u-w)+f, \quad u(x, 0)-w(x,0)=u_0(x)-w(x,0), x\in \Omega, \quad t>0,
%$$
%where $f=-(w_t+\Delta^2w)$ and the equation is subject to zero boundary conditions \cite{LM72}.
%
%Thus, we will consider the stability of the DG scheme corresponding to equations with zero boundary conditions and a source term $f$, and
For these three types of boundary fluxes, the following stability results hold true.
\begin{thm}
The DG scheme (\ref{FPDG1D}) or (\ref{FPDG1D+}) subject to one of three types of boundary fluxes (\ref{fluxBD1})-(\ref{fluxBD3}) is stable in the sense that
\bq\label{stabbd}
\|u_h - \widetilde{u}_h\| \leq \|u_0-\widetilde{u}_0\|,
\eq
provided (i)   $\beta_1 \geq 0$,   (ii) $\forall \beta_0$,  and (iii) no flux parameter is needed.  Here $u_h$ and $\widetilde{u}_h$ in (\ref{stabbd}) denote the corresponding numerical solutions that satisfy the same boundary conditions associated with the initial conditions $u_0$ and $\widetilde{u}_0$, respectively.
\end{thm}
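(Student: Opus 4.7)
The plan is to exploit the linearity of the DG formulation to reduce the question to a homogenized problem, derive an energy identity whose only non-canceling contribution comes from $\partial\Omega$, and then show case by case that this boundary remainder is non-positive. First I would set $w_h := u_h - \widetilde u_h$ and $r_h := q_h - \widetilde q_h$; since both $(u_h,q_h)$ and $(\widetilde u_h,\widetilde q_h)$ satisfy the DG scheme with the same boundary data $g_i$, the pair $(w_h,r_h)$ solves the same system with $g_i\equiv 0$, so (\ref{stabbd}) reduces to proving $\|w_h(\cdot,t)\| \leq \|w_h(\cdot,0)\|$ for this homogenized problem.

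Next I would repeat the calculation underlying Theorem \ref{SemiStab} (or \ref{SemiStab2D} in the multi-dimensional case): take $\phi = w_h$ in the $u$-equation and $\psi = r_h$ in the $q$-equation, and sum over all cells. Interior cell interfaces still use averages, so they assemble into the symmetric bilinear form (\ref{bilinear}) or (\ref{bilinear+}), whose cross terms $A(r_h,w_h) - A(w_h,r_h)$ cancel exactly by symmetry. What remains is an identity
\[
\frac{1}{2}\frac{d}{dt}\|w_h\|^2 + \|r_h\|^2 \;=\; \mathcal{B}(w_h,r_h),
\]
where $\mathcal{B}$ collects only the boundary-flux contributions on $\partial\Omega$ obtained by substituting the flux formulas from (\ref{fluxBD1})--(\ref{fluxBD3}) with $g_i\equiv 0$.

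The heart of the argument is then showing $\mathcal{B} \leq 0$ case by case. For case (i), the homogeneous fluxes give $\widehat{w_h}=0$, $\widehat{\partial_\nu w_h}=0$, $\widehat{r_h}=r_h$, $\widehat{\partial_\nu r_h}=\partial_\nu r_h-(\beta_1/h)w_h$; after substitution the $w_h\,\partial_\nu r_h$ contributions from the two equations cancel, leaving $\mathcal{B}=-(\beta_1/h)\int_\Gamma w_h^2\,d\sigma$, which is non-positive precisely when $\beta_1\geq 0$. For case (ii) the two normal-derivative fluxes share the same parameter $\beta_0$, and a direct expansion shows that both the $\beta_0$-contributions and the cross terms $w_h\,\partial_\nu r_h$ and $r_h\,\partial_\nu w_h$ cancel pairwise between the $u$- and $q$-equations, so $\mathcal{B}=0$ for any $\beta_0\in\mathbb{R}$. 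For case (iii), the choices $\widehat{w_h}=w_h$ and $\widehat{r_h}=r_h$ annihilate the $(q_h-\widehat{q_h})\,\partial_\nu\phi$ and $(u_h-\widehat{u_h})\,\partial_\nu\psi$ contributions outright, while $\widehat{\partial_\nu w_h}=\widehat{\partial_\nu r_h}=0$ (from $g_2=g_4=0$) eliminate the rest, so $\mathcal{B}=0$ without any penalty parameter.

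With $\mathcal{B}\leq 0$ in hand, integrating $\frac{d}{dt}\|w_h\|^2 \leq -2\|r_h\|^2 \leq 0$ in $t$ yields (\ref{stabbd}). The main technical obstacle is the bookkeeping of signs in the boundary assembly, which requires tracking the outward-normal orientation (flipping between opposite endpoints in 1D and between opposite edges in 2D), and is particularly delicate in case (ii): the absence of any restriction on $\beta_0$ rests on an exact pairwise cancellation that is not transparent from the flux definitions and must be verified by a careful term-by-term expansion.
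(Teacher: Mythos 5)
Your proposal is correct and follows essentially the same route as the paper: reduce to homogeneous boundary data by linearity, use the symmetry of the interior bilinear form to cancel the cross terms in the combined energy identity, and verify case by case that the residual boundary functional is $-\frac{\beta_1}{h}\int_\Gamma w_h^2\,ds$ in case (i), identically zero for all $\beta_0$ in case (ii), and zero in case (iii). The only cosmetic difference is that the paper computes the boundary functional $B$ for general data first and then sets $g_i=0$, whereas you homogenize at the outset.
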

\begin{proof}
Let $A^0(\cdot, \cdot)$ be the bilinear operator defined in (\ref{FPDG1D}) or (\ref{FPDG1D+}), yet without boundary terms. Then the sum of two global formulations
yields the following
\bq\label{semiDGCom}
(u_{ht}, \phi) +(q_h, \psi)=-A^0(q_h, \phi)+A^0(u_h, \psi)+B(u_h, q_h; \phi, \psi),
\eq
where
$$
B=\int_\Gamma \left( \widehat{\partial_\nu q_h} \phi - \widehat{\partial_\nu u_h}\psi +(q_h-\widehat q_h)\partial_\nu \phi
-(u_h-\widehat u_h)\partial_\nu \psi \right)ds.
$$
%Here $\Gamma$ is a union of all boundary edges in 2D case, and $\{x_{1/2}=a, b=x_{N+1/2}\}$ in one-dimensional case.

Upon careful calculation,  $B$ in each case is given as follows:\\
(i)
$$
B=\int_{\Gamma} (\partial_\nu q_h \phi-u_h \partial_\nu \psi -\frac{\beta_1}{h} u_h \phi )ds +\int_\Gamma (g_1\partial_\nu \psi  - g_2\psi+\frac{\beta_1}{h}  g_1\phi)ds;
$$
(ii)
\begin{align*}
B& =\int_{\Gamma} \left(\partial_\nu q_h \phi+ q_h \partial_\nu \phi -u_h \partial_\nu \psi   -\partial_{\nu} u_h \psi \right)ds  -\int_\Gamma\left( \frac{\beta_0}{h}(q_h\phi -u_h\psi)\right)ds \\
& \qquad +\int_\Gamma (\frac{\beta_0}{h} (-g_3\phi -g_1\psi)+g_1\partial_\nu \psi +g_3\partial_\nu \phi)ds; \text{and}
\end{align*}
 (iii)
$$
B=\int_{\Gamma} (-g_4\phi -g_2\psi)ds.
$$
Taking $\phi=u_h$ and $\psi=q_h$ in  (\ref{semiDGCom}) %$$
%(u_{ht}, \phi) +(q_h, \psi)=-A^0(q_h, \phi)+A^0(u_h, \psi)+B(u_h, q_h; \phi, \psi),
%$$
we obtain
$$
\frac{1}{2} \frac{d}{dt} \|u_{h}\|^2+\|q_h\|^2 =B(u_h, q_h; u_h, q_h),
$$
where such $B$  reduces to  \\
(i)
$
B=-\frac{\beta_1}{h}  \int_{\Gamma}  u_h^2 ds +\int_\Gamma (g_1\partial_\nu q_h  - g_2q_h+\frac{\beta_1}{h}  g_1u_h)ds,
$ \\
(ii) $B =\int_\Gamma (\frac{\beta_0}{h} (-g_3u_h -g_1q_h)+g_1\partial_\nu q_h +g_3\partial_\nu u_h)ds$, and \\
(iii) $ B=\int_{\Gamma} (-g_4u_h -g_2q_h)ds.$\\
Both the equation and the boundary conditions are linear, it suffices to show $\|u_h(\cdot, t)\|\leq \|u_h(\cdot, 0)\|$ when boundary conditions are homogeneous, i.e., $g_i=0, \ i=1, \cdots, 4$.  Indeed, in such cases we have  (i) $B=-\frac{\beta_1}{h}\int_{\Gamma} u_h^2ds$,  (ii) $B=0\quad  \forall \beta_0$, and   (iii) $B=0$.
\iffalse
{\color{red}
Since $u_h$ and $\widetilde{u}_h$ are solutions of (\ref{semiDGCom}) associated with the initial conditions $u_0$ and $\widetilde{u}_0$, then it follows
\bq\label{semiDGComDF}
(\widetilde{\widetilde{u}}_{ht}, \phi) +(\widetilde{\widetilde{q}}_h, \psi)=-A^0(\widetilde{\widetilde{q}}_h, \phi)+A^0(\widetilde{\widetilde{u}}_h, \psi)+\widetilde{B}(\widetilde{\widetilde{u}}_h, \widetilde{\widetilde{q}}_h; \phi, \psi),
\eq
where $\widetilde{\widetilde{u}}_h=u_h-\widetilde{u}_h$,  $\widetilde{\widetilde{q}}_h=q_h-\widetilde{q}_h$ and $\widetilde{B}(\widetilde{\widetilde{u}}_h, \widetilde{\widetilde{q}}_h; \phi, \psi)$ is obtained by taking $g_i=0, \ i=1, \cdots, 4$ in $B(\widetilde{\widetilde{u}}_h, \widetilde{\widetilde{q}}_h; \phi, \psi)$.

With the three conditions (i)-(iii) for $\beta_0, \ \beta_1$ and further taking $\phi=\widetilde{\widetilde{u}}_h, \; \psi=\widetilde{\widetilde{q}}_h$ in (\ref{semiDGComDF}), we have
$$
\widetilde{B}(\widetilde{\widetilde{u}}_h, \widetilde{\widetilde{q}}_h; \widetilde{\widetilde{u}}_h, \widetilde{\widetilde{q}}_h) \leq 0,
$$
which implies
\bqs
\frac{1}{2}\frac{d}{dt} \|\widetilde{\widetilde{u}}_h\|^2 +\| \widetilde{\widetilde{q}}_h \|^2\leq 0.
\eqs
}
\fi
Thus, the conclusion follows.
\end{proof}

\begin{rem} If $\tilde u_h$ is an approximation to the steady solution of the corresponding time-independent problem, then (\ref{stabbd}) leads to
$$
\|u_h \| \leq \|u_0-\widetilde{u}_0\|+\|\widetilde{u}_h\|,
$$
which can be regarded as the priori bound in terms of both initial data and the boundary data.
\end{rem}

The necessity of using $\beta_1$ in  (\ref{fluxBD1}) and $\beta_0$ in (\ref{fluxBD2}) is illustrated numerically in Example \ref{Ex1dbd1} and \ref{Ex1dbd2}, respectively,  by checking whether the optimal order of accuracy can be obtained. Extensive numerical tests including Example \ref{Ex1dbd1} and \ref{Ex1dbd2} indicate that the choice of $\beta_0, \beta_1$ as shown in Table \ref{tabbeta} is sufficient for achieving optimal convergence.
\begin{table}[!htbp]\tabcolsep0.03in
\caption{The choice of $\beta_0, \beta_1$  in boundary fluxes (\ref{fluxBD1}) and (\ref{fluxBD2}).}
\begin{tabular}[c]{||c|c|c||}
\hline
% \multirow{2}{*}{$k$} & \multirow{2}{*}{$\Delta t$}&   \multirow{2}{*}{ } \\
fluxes & $k=1$ & $k\geq 2$ \\
\hline
(\ref{fluxBD1}) & $\beta_1=0$ & $\beta_1 \geq \delta $ \\
\hline
(\ref{fluxBD2})  & $|\beta_0|\geq C$ & $\beta_0=0$ \\
\hline
\end{tabular}\label{tabbeta}
\end{table}
In  Table 1, $\delta>0$ can be a quite small number (see Figure \ref{1dbeta1}), and $C>0$ is a constant, say $C=3$ is a valid choice in our numerical examples {on uniform meshes}. It would be interesting to justify  these sufficient conditions by establishing some optimal error estimates.

\subsection{Application to a nonlinear problem} We consider the initial-boundary value problem for the  one-dimensional Swift-Hohenberg equation  of the form,
\begin{equation}\label{SH}
\left \{
\begin{array}{rl}
% & u_t=-D(\partial_x^2+\kappa^2)^2u+f(u), \\
 & u_t=-D\kappa^4 u-2D\kappa^2 u_{xx} -D u_{xxxx}+f(u) \quad x \in [a,b], \; t>0,  \\
& u=0 \; \text{and} \; u_{xx}=0 \quad \text{at} \; x=a, b,\\
% & u(a,t)= 0, \quad u(b,t)=0, \\
 %& u_{xx}(a,t)= 0, \quad u_{xx}(b,t)=0,\\
 & u(x,0)=u_0(x),
\end{array}
\right.
\end{equation}
where $D>0, \ \kappa$ are constants and $f(u)= \varepsilon u+gu^2-u^3$
%$
%f(u)=-\frac{d\Phi(u)}{du} = \varepsilon u+gu^2-u^3, \quad \Phi(u) = -\frac{\varepsilon}{2}u^2-\frac{g}{3}u^3+\frac{1}{4}u^4,
%$
with non-negative constants $\varepsilon, \ g$.  The Swift-Hohenberg equation introduced in \cite{SH77} is noted for its pattern-forming behavior, and
 endowed with a gradient flow structure, $u_t= -\frac{\delta \mathcal{E}}{\delta u}$, for zero-flux boundary conditions.  This equation relates the temporal evolution of the pattern to the spatial structure of the pattern, with $\epsilon$ measuring how far the temperature is above the minimum temperature difference required for convection, and  $g$ is the parameter controlling the strength of the quadratic nonlinearity.

The Swift-Hohenberg equation (\ref{SH}) can be rewritten as an equivalent system
\begin{equation*}%\label{BeamSys1DNon}
\left \{
\begin{array}{rl}
    u_t = & \sqrt{D}(\partial_x^2+\kappa^2)q+f(u),\\
    q = & -\sqrt{D}(\partial_x^2+\kappa^2)u.
\end{array}
\right.
\end{equation*}
With this formulation the energy dissipation law becomes
%One can check that (\ref{BeamSys1DNon}) satisfies an energy dissipation law of the form
\bqs
\frac{d}{dt}\mathcal{E} = -\int_a^b|u_t|^2dx \leq 0,
\eqs
where
$
\mathcal{E}=\int_a^b \Phi(u)+\frac{1}{2} |q|^2dx
$
is a free-energy functional,  and
$$
\Phi(u) = -\frac{\varepsilon}{2}u^2-\frac{g}{3}u^3+\frac{1}{4}u^4= -\int_0^u f(\xi)d\xi.
$$
This is the fundamental stability property of the Swift--Hohenberg equation. The objective of this section is to illustrate that our DG discretization with proper time discretization inherits this property irrespectively  of time step sizes.

The semi-discrete DG method for (\ref{SH}) may be given by
%\bqs%\label{FPDG1DNon}
\begin{align*}
(u_{ht}, \phi) = &- A(q_h,\phi) +(f(u_h), \phi),\\
(q_h, \psi) = & A(u_h,\psi),
\end{align*}
%\eqs
where
\begin{align*}
A(w,v)= & \sqrt{D} \left( A^0(w,v)-\kappa^2(w, v)+\left( w^+v^+_x\right)_{1/2}-\left( w^-v^-_x\right)_{N+1/2} \right. \\
&\left. +\left( w_x^+v^+\right)_{1/2}-\left(  w_x^-v^-\right)_{N+1/2}+\frac{\beta_{0}}{h}\left( w^+{v^+}\right)_{1/2}
+\frac{\beta_{0}}{h}\left( w^-{v^-}\right)_{N+1/2} \right ),
\end{align*}
with parameter $\beta_0$ chosen as listed in Table \ref{tabbeta}. The DG scheme can be shown to preserve the energy dissipation law in the sense that
\bqs
\frac{d}{dt}\mathcal{E}_h  = -\int_a^b|u_{ht}|^2dx \leq 0,
\eqs
where
$
\mathcal{E}_h=\int_a^b \Phi(u_h)+\frac{1}{2} |q_h|^2dx.
$

Time discretization should be taken with care, here we want to preserve the energy dissipation law at each time step.  A simple choice is
%Here a fully discrete DG scheme for (\ref{BeamSys1DNon}) is
to obtain $(u_h^{n+1}, q_h^{n+1}) \in V_h^k \times V_h^k$ from $(u_h^{n}, q_h^{n})$ by
\begin{subequations}\label{FPDGFullNon}
\begin{align}
   \left(  \frac{u_h^{n+1} - u_h^n}{\Delta t}, \phi \right) = & - A(q_h^{n+1/2},\phi)- \left( \frac{\Phi(u_h^{n+1})-\Phi(u^n)}{u_h^{n+1}-u_h^n},\phi\right)\\
    (q_h^{n}, \psi) = & A(u_h^{n},\psi),
\end{align}
\end{subequations}
for all $\phi, \ \psi \in V_h^k$, where $q_h^{n+1/2}=\frac{1}{2}(q^{n+1}_h+q_h^n)$. This fully discrete DG scheme does have the following property.
%For the fully discrete DG scheme (\ref{FPDGFullNon}), we have
\begin{thm}
The solution to (\ref{FPDGFullNon}) satisfies the energy dissipation law of the form
\bq\label{engdis}
\mathcal{E}_h^{n+1} - \mathcal{E}_h^n = -\frac{\|u_h^{n+1}-u_h^{n}\|^2}{\Delta t} ,
\eq
where
$$
\mathcal{E}_h^n= \int_a^b \Phi(u_h^n) +\frac{1}{2}|q_h^n|^2dx.
$$
\end{thm}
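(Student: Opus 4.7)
The plan is to test the momentum equation (\ref{FPDGFullNon}a) with $\phi = u_h^{n+1}-u_h^n \in V_h^k$. The discrete time derivative on the left immediately produces $\frac{1}{\Delta t}\|u_h^{n+1}-u_h^n\|^2$, while the nonlinear term is designed precisely so that pointwise
\[
\frac{\Phi(u_h^{n+1})-\Phi(u_h^n)}{u_h^{n+1}-u_h^n}(u_h^{n+1}-u_h^n)=\Phi(u_h^{n+1})-\Phi(u_h^n),
\]
contributing exactly $\int_a^b[\Phi(u_h^{n+1})-\Phi(u_h^n)]\,dx$ to the right-hand side. This handles two out of the three ingredients in $\mathcal{E}_h^{n+1}-\mathcal{E}_h^n$.

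The key step is to rewrite $A(q_h^{n+1/2},u_h^{n+1}-u_h^n)$ as $\tfrac{1}{2}(\|q_h^{n+1}\|^2-\|q_h^n\|^2)$. To this end I would write (\ref{FPDGFullNon}b) at two consecutive time levels and subtract to obtain
\[
(q_h^{n+1}-q_h^n,\psi)=A(u_h^{n+1}-u_h^n,\psi)\quad\forall\psi\in V_h^k,
\]
then choose $\psi = q_h^{n+1/2} = \tfrac{1}{2}(q_h^{n+1}+q_h^n)$, which lies in $V_h^k$. The left side telescopes into $\tfrac{1}{2}(\|q_h^{n+1}\|^2-\|q_h^n\|^2)$. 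Invoking the symmetry $A(w,v)=A(v,w)$ of the Swift--Hohenberg bilinear form, the right side equals $A(q_h^{n+1/2},u_h^{n+1}-u_h^n)$. Symmetry holds by inspection: the interior piece is symmetric by (\ref{sym}); the $-\kappa^2(w,v)$ contribution and each boundary penalty $\tfrac{\beta_0}{h}(w^\pm v^\pm)$ are manifestly symmetric; and the remaining boundary contributions come in symmetric pairs of the type $(w^+ v_x^+)_{1/2}+(w_x^+ v^+)_{1/2}$.

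Assembling these identities yields
\[
\frac{\|u_h^{n+1}-u_h^n\|^2}{\Delta t}+\tfrac{1}{2}(\|q_h^{n+1}\|^2-\|q_h^n\|^2)+\int_a^b\!\bigl[\Phi(u_h^{n+1})-\Phi(u_h^n)\bigr]dx = 0,
\]
which rearranges exactly to (\ref{engdis}). I do not foresee a serious obstacle: the proof is a carefully choreographed testing and telescoping, and the scheme was clearly engineered so that the Crank--Nicolson-type average $q_h^{n+1/2}$, the pairing via the symmetric $A$, and the convex-splitting-like discretization of $\Phi'(u)$ all fit together exactly. The only item meriting brief verification is the symmetry of the augmented $A$, which is dispatched by inspecting its explicit formula in the excerpt.
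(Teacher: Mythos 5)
Your proposal is correct and follows essentially the same route as the paper: test (\ref{FPDGFullNon}a) with $\phi=u_h^{n+1}-u_h^n$, difference (\ref{FPDGFullNon}b) at consecutive time levels, test with $\psi=q_h^{n+1/2}$, and combine using the symmetry of $A$. Your explicit verification of the symmetry of the augmented bilinear form is a detail the paper leaves implicit, but the argument is identical in substance.
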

\begin{proof}
By taking the difference of (\ref{FPDGFullNon}b) at time level $n+1$ and $n$, we obtain
\bq\label{FPDGFull1DDe}
(q_h^{n+1}-q_h^{n}, \psi) = A(u_h^{n+1}-u_h^{n},\psi).
\eq
Taking $\phi=u_h^{n+1} - u_h^n$ in (\ref{FPDGFullNon}a), $\psi=q_h^{n+1/2 }$ in (\ref{FPDGFull1DDe}), then
plugging the resulting relation into (\ref{FPDGFullNon}a), we have
\bqs\ba
\frac{\|u_h^{n+1}-u_h^{n}\|^2}{\Delta t} = & - \left( q_h^{n+1}-q_h^{n}, q_h^{n+1/2}\right)- \int_a^b (\Phi(u_h^{n+1})- \Phi(u_h^{n})) dx\\
= &  - \frac{1}{2}\left(\|q_h^{n+1}\|^2-\|q_h^{n}\|^2\right) -\int_a^b \Phi(u_h^{n+1})dx +\int_a^b \Phi(u_h^{n}) dx,\\
\ea\eqs
which gives (\ref{engdis}).
\end{proof}
We next propose an iteration scheme to solve the nonlinear equation (\ref{FPDGFullNon}).  Rewriting the nonlinear term in (\ref{FPDGFullNon}a) as
$$
\frac{\Phi(u_h^{n+1})-\Phi(u_h^n)}{u_h^{n+1}-u_h^n}=G_1(u_h^{n+1},u_h^n)u_h^{n+1}+G_2(u_h^n),
$$
where
\bqs
\ba
G_1(w,v)=& -\frac{\varepsilon}{2}-\frac{g}{3}(w+v)+\frac{1}{4}(w^2+wv+v^2), \\
G_2(v)=& -\frac{\varepsilon}{2}v-\frac{g}{3}v^2+\frac{1}{4}v^3,
\ea
\eqs

with which we apply  the idea in \cite{CP02} to obtain the following iterative scheme,
\bq\label{FPDGFull1DNon}
\ba
     \left( \frac{u_h^{n+1,l+1}-u_h^n}{\Delta t}, \phi \right) & +
     \frac{1}{2}A(q_h^{n+1,l+1},\phi)=
    -\frac{1}{2}A(q_h^{n},\phi)-\left( G_1(u_h^{n+1,l},u_h^n)u_h^{n+1,l+1}+G_2(u_h^n), \phi \right),\\
   \frac{1}{2}A(u_h^{n+1,l+1},\psi) & -\frac{1}{2}(q_h^{n+1,l+1}, \psi) = 0,% -\frac{1}{2}A(u_h^{n},\psi)+\frac{1}{2}(q_h^{n}, \psi),
\ea
\eq
where $G_1(u_h^{n+1,0},u_h^n)=G_1(u_h^n,u_h^n)$, the iteration stops as $\|u_h^{n+1,l}-u_h^{n+1,l-1}\| < \delta$ for certain $l=L \ (L \geq 1)$.
Then the last iteration gives the sought solution on the new time stage and we define
\bqs%\label{IterSoln}
u_h^{n+1} =u_h^{n+1,L}.
\eqs
%\begin{rem}
%In this section, we only considered one-dimensional Swift-Hohenberg equation, we will explore more general nonlinear 4th order PDEs in future.
%\end{rem}

\section{Numerical examples}
In this section we numerically validate our theoretical results, as well as the stated extension cases with one and two dimensional examples.
%are presented to investigate
%The $L^2$ error on each computational cells is measured by Gaussian quadrature rule, and the $L^\infty$ error is taken as the maximum absolute value of the difference of numerical solution with the exact solution.
The orders of convergence are calculated using
$$
\log_2\frac{\|u-u_h\|_{L^2}}{\|u-u_{h/2}\|_{L^2}}, \quad \log_2\frac{\|u-u_h\|_{L^\infty}}{\|u-u_{h/2}\|_{L^\infty}},
$$
respectively; where errors are given in the following way: for the one dimensional $L^2$ error, we use
$$
\|u-u_h\|_{L^2} = \left( \sum_{j=1}^N \frac{h_j}{2} \sum_{\alpha=1}^{k+1} \omega_{\alpha} |u_h(\hat{x}^j_{\alpha},t)-u(\hat{x}^j_{\alpha},t)|^2 \right)^{\frac{1}{2}},
$$
where $\omega_{\alpha}>0$ are the weights, $\hat{x}^j_{\alpha}$ are the corresponding Gauss points in each cell $I_j$,  and
for the $L^\infty$ error,
$$
\|u-u_h\|_{L^\infty} = \max_{1\leq j \leq N}  \max_{1\leq\alpha\leq k+1} |u_h(\hat{x}^j_{\alpha},t)-u(\hat{x}^j_{\alpha},t)|.
$$

\begin{example}\label{Ex1d} (1D accuracy test)
We consider the biharmonic equation
\begin{equation*}%\label{Ex1Biharmonic}
\left \{
\begin{array}{rl}
    u_t &=  -u_{xxxx} \quad (x,t) \in [0, 2\pi]\times (0,T],\\
    u(x,0) &=   \sin(x),\\
\end{array}
\right.
\end{equation*}
with periodic boundary conditions.  And the exact solution is given by
\bqs
    u(x,t) = e^{-t} \sin(x).
\eqs
We test this example using DG scheme (\ref{FPDGCell}) with the Crank-Nicolson time discretization, based on polynomials of degree $k$ with $k=1, \cdots, 4$. Both errors and orders of accuracy at $T=1$ are reported in Table \ref{tab1dtk1}. These results show that $(k+1)$th order of accuracy in both $L^2$ and $L^{\infty}$ are obtained.

\begin{table}[!htbp]\tabcolsep0.03in
\caption{1D $L^2, \; L^\infty$ errors for biharmonic equation at $T= 1$.}
\begin{tabular}[c]{||c|c|c|c|c|c|c|c|c|c||}
\hline
\multirow{2}{*}{$k$} & \multirow{2}{*}{$\Delta t$}&   \multirow{2}{*}{ } & N=10 & \multicolumn{2}{|c|}{N=20} & \multicolumn{2}{|c|}{N=40} & \multicolumn{2}{|c||}{N=80}  \\
\cline{4-10}
& & & error & error & order & error & order & error & order\\
\hline
\multirow{2}{*}{1}  & \multirow{2}{*}{0.01} & $\|u-u_h\|_{L^2}$ &  0.0507931 & 0.0113953 & 2.16  & 0.00278271 & 2.03 & 0.000694474 & 2.00 \\
%\cline{3-10}
% & & $|u-u_h|_{H^1}$  & 0.118437 & 0.0591038 &  1.00 & 0.0304443 & 0.96  & 0.0149446 & 1.03 \\
\cline{3-10}
 & & $\|u-u_h\|_{L^\infty}$  & 0.0341444 & 0.00769913 & 2.15  & 0.00189324 &  2.03 & 0.000475639 & 1.99 \\
\hline
\hline
\multirow{2}{*}{2}  & \multirow{2}{*}{0.0005} & $\|u-u_h\|_{L^2}$ & 0.00395192 & 0.000559636 & 2.82  & 7.24864e-05 & 2.95 & 8.7753e-06 & 3.05 \\
%\cline{3-10}
% & & $|u-u_h|_{H^1}$  & 0.021178  & 0.00571154 &  1.89 & 0.00145912 & 1.97 & 0.000352647 & 2.05 \\
\cline{3-10}
 & & $\|u-u_h\|_{L^\infty}$  & 0.00296885 & 0.000444451 & 2.74 & 5.84061e-05 &  2.93 & 7.0346e-06 & 3.05 \\
\hline
\hline
\multirow{2}{*}{$k$} & \multirow{2}{*}{$\Delta t$}&   \multirow{2}{*}{ } & N=5 & \multicolumn{2}{|c|}{N=10} & \multicolumn{2}{|c|}{N=20} & \multicolumn{2}{|c||}{N=40}  \\
\cline{4-10}
& & & error & error & order & error & order & error & order\\
\hline
\multirow{2}{*}{3} & \multirow{2}{*}{0.0005}  & $\|u-u_h\|_{L^2}$ & 0.000716136 & 3.6469e-05 &  4.30  & 2.14439e-06 & 4.09 & 1.18333e-07 & 4.18 \\
%\cline{3-10}
% & & $|u-u_h|_{H^1}$   & 0.00467751  & 0.000528519  & 3.15 & 0.000528519 &  3.04 & 8.12922e-06 & 2.98 \\
\cline{3-10}
 & & $\|u-u_h\|_{L^\infty}$   & 0.000580818  & 3.17668e-05 & 4.19 & 1.87677e-06 & 4.08  & 1.1109e-07 & 4.08 \\
\hline
\hline
\multirow{2}{*}{4} & \multirow{2}{*}{0.0001} &  $\|u-u_h\|_{L^2}$ & 5.25422e-05 & 1.95246e-06 & 4.75 &  6.42678e-08 & 4.93 & 2.07446e-09 & 4.95 \\
%\cline{3-10}
% & & $|u-u_h|_{H^1}$   &  0.000434364 & 3.04635e-05 & 3.83 & 1.96911e-06 & 3.95 & 1.2179e-07 & 4.02 \\
\cline{3-10}
 & & $\|u-u_h\|_{L^\infty}$   &  3.98997e-05 & 1.60107e-06 & 4.64 & 5.42808e-08 & 4.88 & 1.69245e-09 & 5.00 \\
\hline
\end{tabular}\label{tab1dtk1}
\end{table}

\end{example}

\begin{example}\label{Ex2d} (2D accuracy test)
We consider the 2D linear biharmonic equation
\begin{equation*}%\label{Ex1Biharmonic2D}
\left \{
\begin{array}{rl}
    u_t + \Delta^2 u = & 0 \quad (x,y,t) \in [0, 4\pi]\times [0, 4\pi]\times (0,T],\\
    u(x,y,0) = &  \sin(0.5x)\sin(0.5y),\\
\end{array}
\right.
\end{equation*}
with periodic boundary conditions.  And the exact solution is given by
\bqs
    u(x,t) = e^{-0.25t} \sin(0.5x)\sin(0.5y).
\eqs
We test this example by DG scheme (\ref{FPDGFull+}) with $\theta=1/2$,  based on tensor product of polynomials of degree $k$ with $k=1, 2, 3$ on rectangular meshes.  Both errors and orders of accuracy at $T=0.1$ are reported in Table \ref{tab2dtk1}. These results show that $(k+1)$th order of accuracy in both $L^2$ and $L^{\infty}$ are obtained.
\begin{table}[!htbp]\tabcolsep0.03in
\caption{2D $L^2, \; L^\infty$ errors for biharmonic equation at $T= 0.1$.}
\begin{tabular}[c]{||c|c|c|c|c|c|c|c|c|c||}
\hline
\multirow{2}{*}{$k$} & \multirow{2}{*}{$\Delta t$}&   \multirow{2}{*}{ } & N=8 & \multicolumn{2}{|c|}{N=16} & \multicolumn{2}{|c|}{N=32} & \multicolumn{2}{|c||}{N=64}  \\
\cline{4-10}
& & & error & error & order & error & order & error & order\\
%\hline
%\multirow{3}{*}{0}  & \multirow{3}{*}{1e-3} & $\|u-u_h\|_{L^2}$ &  1.9258 & 0.988716 & 0.96 & 0.514309 & 0.94 & 0.290359 & 0.82  \\
%\cline{3-10}
% & & $|u-u_h|_{H^1}$  & 4.33319 & 4.33319 & 0.00 & 4.33319 & 0.00 & 4.33319 & 0.00  \\
%\cline{3-10}
% & & $\|u-u_h\|_{L^\infty}$  & 0.353899 & 0.194848 & 0.86 & 0.107973 & 0.85 & 0.0618326 & 0.80  \\
%\hline
\hline
\multirow{2}{*}{1}  & \multirow{2}{*}{1e-3} & $\|u-u_h\|_{L^2}$ &  0.294331 & 0.0617401 & 2.25 & 0.0132547 & 2.22 & 0.00316944 & 2.06  \\
%\cline{3-10}
% & & $|u-u_h|_{H^1}$  & 1.09765 & 0.51869 & 1.08 & 0.248985 & 1.06 & 0.123212 & 1.01  \\
\cline{3-10}
 & & $\|u-u_h\|_{L^\infty}$  & 0.113491 & 0.0259853 & 2.13 & 0.00620769 & 2.07 & 0.0015334 & 2.02  \\
\hline
\hline
\multirow{2}{*}{2}  & \multirow{2}{*}{1e-4} & $\|u-u_h\|_{L^2}$ & 0.0857554 & 0.0138187 & 2.63 & 0.00185713 & 2.90 & 0.000232547 & 3.00  \\
%\cline{3-10}
% & & $|u-u_h|_{H^1}$  & 0.197434 & 0.057513 & 1.78 & 0.0150231 & 1.94 & 0.00373757 & 2.01  \\
\cline{3-10}
 & & $\|u-u_h\|_{L^\infty}$  & 0.015608 & 0.00239088 & 2.71 & 0.000311659 & 2.94 & 3.86222e-05 & 3.01  \\
 \hline
\hline
\multirow{2}{*}{$k$} & \multirow{2}{*}{$\Delta t$}&   \multirow{2}{*}{ } & N=4 & \multicolumn{2}{|c|}{N=8} & \multicolumn{2}{|c|}{N=16} & \multicolumn{2}{|c||}{N=32}  \\
\cline{4-10}
& & & error & error & order & error & order & error & order\\
\hline
\multirow{2}{*}{3} & \multirow{2}{*}{1e-5}  & $\|u-u_h\|_{L^2}$ & 0.0241859 & 0.00123277 & 4.29 & 7.05843e-05 & 4.13 & 4.31039e-06 & 4.03  \\
%\cline{3-10}
% & & $|u-u_h|_{H^1}$   & 0.0626366 & 0.00702221 & 3.16 & 0.000839367 & 3.06 & 0.000103717 & 3.02  \\
\cline{3-10}
 & & $\|u-u_h\|_{L^\infty}$   & 0.00353992 & 0.000355156 & 3.32 & 2.00749e-05 & 4.14 & 1.50258e-06 & 3.74  \\
\hline
\end{tabular}\label{tab2dtk1}
\end{table}

\end{example}

\begin{example}\label{Ex2duxx} (2D linearized Cahn-Hillard equation)
We consider the 2D linearized Cahn-Hillard equation
\begin{align*}%\label{Ex1Biharmonic2Duxx}
    & u_t + \Delta^2 u + \Delta u =  0 \quad (x,y,t) \in [0, 2\pi/a]\times [0, 2\pi/a]\times (0,T],\\
    & u(x,y,0) =  \sin(ax)\sin(ay),
\end{align*}
with periodic boundary conditions, where $a>0$ is a constant.

The exact solution is given by
\bqs
    u(x,t) = e^{-bt} \sin(ax)\sin(ay),
\eqs
where $b=4a^4-2a^2$.
  % with the generalized Neumann condition was studied in \cite{DS09} by the local discontinuous Galerkin method.

We test this example using DG scheme  (\ref{FPDG1D++Ex})  on rectangular meshes with the Crank-Nicolson time discretization, based on polynomials of degree $k$ with $k=1, 2, 3$, by varying the interval length through $a$ in three cases:
(i) $a=1/2$; (ii) $a=\sqrt{2}/2$; and (iii) $a=\sqrt{3}/2$. They correspond to $b=-1/4, 0, 3/4$,  while the solution in each case shows different growth/decay behavior in time.

%\noindent \textbf{Case 1.} , then $b=-1/4$.
%\noindent \textbf{Case 2.}  $a=\sqrt{2}/2$, then $b=0$.
%\noindent \textbf{Case 3.}  $a=\sqrt{3}/2$, then $b=3/4$.

Both errors and orders of accuracy at $T=0.1$ are reported in Table \ref{tab2dtk1uxx}-\ref{tab2dtk1uxx3}, respectively. These results show that $(k+1)$th order of accuracy in both $L^2$ and $L^{\infty}$ norms are obtained.

%For the Test cases above, the penalty free DG method based on polynomials of degree $k$ with $k=1, 2, 3$ on rectangular mesh are tested, $(k+1)$th order of accuracy for $L^2$, $k$th order of accuracy for $H^1$ and $(k+1)$th order of accuracy for $L^{\infty}$ are obtained at $t=0.1$ for $k=1, 2, 3$. The errors and accuracy are reported in Table \ref{tab2dtk1uxx}-\ref{tab2dtk1uxx3}, respectively.

%The DG method based on polynomials of degree $m$ with $m=0, 1, 2, 3$ on rectangular mesh are tested, $(m+1)$th order of accuracy for $L^2$, $m$th order of accuracy for $H^1$ and $(m+1)$th order of accuracy for $L^{\infty}$ are obtained at $t=0.1$ and the errors and accuracy are reported in Table \ref{tab2dtk1uxx}. Here we notice that the penalty free DG scheme can get optimal errors for $P^0$ polynomial.

\begin{table}[!htbp]\tabcolsep0.03in
\caption{2D $L^2, \; L^\infty$ errors for linearized Cahn-Hillard equation at $T= 0.1$, a=1/2.}
\begin{tabular}[c]{||c|c|c|c|c|c|c|c|c|c||}
\hline
\multirow{2}{*}{$k$} & \multirow{2}{*}{$\Delta t$}&   \multirow{2}{*}{ } & N=8 & \multicolumn{2}{|c|}{N=16} & \multicolumn{2}{|c|}{N=32} & \multicolumn{2}{|c||}{N=64}  \\
\cline{4-10}
& & & error & error & order & error & order & error & order\\
%\hline
%\multirow{3}{*}{0}  & \multirow{3}{*}{1e-3} & $\|u-u_h\|_{L^2}$ &  2.02425 & 1.0388 & 0.96 & 0.539485 & 0.95 & 0.303117 & 0.83  \\
%\cline{3-10}
% & & $|u-u_h|_{H^1}$  & 4.55536 & 4.55536 & 0.00 & 4.55536 & 0.00 & 4.55536 & 0.00  \\
%\cline{3-10}
% & & $\|u-u_h\|_{L^\infty}$  & 0.389258 & 0.213786 & 0.86 & 0.114002 & 0.91 & 0.0643459 & 0.83  \\
%\hline
\hline
\multirow{2}{*}{1}  & \multirow{2}{*}{1e-3} & $\|u-u_h\|_{L^2}$ &  0.334674 & 0.0647558 & 2.37 & 0.0138946 & 2.22 & 0.00332186 & 2.06  \\
%\cline{3-10}
% & & $|u-u_h|_{H^1}$  & 1.19824 & 0.546324 & 1.13 & 0.261876 & 1.06 & 0.129545 & 1.02  \\
\cline{3-10}
 & & $\|u-u_h\|_{L^\infty}$  & 0.126283 & 0.0280333 & 2.17 & 0.00669205 & 2.07 & 0.00165341 & 2.02  \\
\hline
\hline
\multirow{2}{*}{2}  & \multirow{2}{*}{1e-4} & $\|u-u_h\|_{L^2}$ & 0.090608 & 0.0145271 & 2.64 & 0.00195239 & 2.90 & 0.000248728 & 2.97  \\
%\cline{3-10}
% & & $|u-u_h|_{H^1}$  & 0.208267 & 0.0604266 & 1.79 & 0.0157846 & 1.94 & 0.00399543 & 1.98  \\
\cline{3-10}
 & & $\|u-u_h\|_{L^\infty}$  & 0.0165817 & 0.00251807 & 2.72 & 0.00032726 & 2.94 & 4.12504e-05 & 2.99  \\
 \hline
\hline
\multirow{2}{*}{$k$} & \multirow{2}{*}{$\Delta t$}&   \multirow{2}{*}{ } & N=4 & \multicolumn{2}{|c|}{N=8} & \multicolumn{2}{|c|}{N=16} & \multicolumn{2}{|c||}{N=32}  \\
\cline{4-10}
& & & error & error & order & error & order & error & order\\
\hline
\multirow{2}{*}{3} & \multirow{2}{*}{1e-5}  & $\|u-u_h\|_{L^2}$ & 0.0250808 & 0.00129598 & 4.27 & 7.42033e-05 & 4.13 & 4.53139e-06 & 4.03  \\
%\cline{3-10}
% & & $|u-u_h|_{H^1}$   & 0.0651707 & 0.00738216 & 3.14 & 0.000882402 & 3.06 & 0.000109035 & 3.02  \\
\cline{3-10}
 & & $\|u-u_h\|_{L^\infty}$   & 0.00365516 & 0.000373252 & 3.29 & 2.48922e-05 & 3.91 & 1.57959e-06 & 3.98  \\
\hline
\end{tabular}\label{tab2dtk1uxx}
\end{table}

\begin{table}[!htbp]\tabcolsep0.03in
\caption{2D $L^2, \; L^\infty$ errors for linearized Cahn-Hillard equation at $T = 0.1$, $a=\sqrt{2}/2$.}
\begin{tabular}[c]{||c|c|c|c|c|c|c|c|c|c||}
\hline
\multirow{2}{*}{$k$} & \multirow{2}{*}{$\Delta t$}&   \multirow{2}{*}{ } & N=8 & \multicolumn{2}{|c|}{N=16} & \multicolumn{2}{|c|}{N=32} & \multicolumn{2}{|c||}{N=64}  \\
\cline{4-10}
& & & error & error & order & error & order & error & order\\
%\hline
%\multirow{3}{*}{0}  & \multirow{3}{*}{1e-3} & $\|u-u_h\|_{L^2}$ &  1.39213 & 0.708175 & 0.98 & 0.355624 & 0.99 & 0.178005 & 1.00  \\
%\cline{3-10}
% & & $|u-u_h|_{H^1}$  & 4.44288 & 4.44288 & 0.00 & 4.44288 & 0.00 & 4.44288 & 0.00  \\
%\cline{3-10}
% & & $\|u-u_h\|_{L^\infty}$  & 0.371358 & 0.197248 & 0.91 & 0.0991187 & 0.99 & 0.0494081 & 1.00  \\
%\hline
\hline
\multirow{2}{*}{1}  & \multirow{2}{*}{1e-3} & $\|u-u_h\|_{L^2}$ &  0.271457 & 0.0450757 & 2.59 & 0.00969181 & 2.22 & 0.00229956 & 2.08  \\
%\cline{3-10}
% & & $|u-u_h|_{H^1}$  &  1.23197 & 0.530803 & 1.21 & 0.255168 & 1.06 & 0.126089 & 1.02  \\
\cline{3-10}
 & & $\|u-u_h\|_{L^\infty}$  &  0.122082 & 0.0259627 & 2.23 & 0.00620589 & 2.06 & 0.00152936 & 2.02  \\
\hline
\hline
\multirow{2}{*}{2}  & \multirow{2}{*}{1e-4} & $\|u-u_h\|_{L^2}$ & 0.0627901 & 0.0100189 & 2.65 & 0.00134647 & 2.90 & 0.000171541 & 2.97  \\
%\cline{3-10}
% & & $|u-u_h|_{H^1}$  & 0.20484 & 0.0590032 & 1.80 & 0.0153997 & 1.94 & 0.00389411 & 1.98  \\
\cline{3-10}
 & & $\|u-u_h\|_{L^\infty}$  & 0.0161613 & 0.0024469 & 2.72 & 0.000318576 & 2.94 & 3.99023e-05 & 3.00  \\
 \hline
\hline
\multirow{2}{*}{$k$} & \multirow{2}{*}{$\Delta t$}&   \multirow{2}{*}{ } & N=4 & \multicolumn{2}{|c|}{N=8} & \multicolumn{2}{|c|}{N=16} & \multicolumn{2}{|c||}{N=32}  \\
\cline{4-10}
& & & error & error & order & error & order & error & order\\
\hline
\multirow{2}{*}{3} & \multirow{2}{*}{1e-5}  & $\|u-u_h\|_{L^2}$ & 00.018709 & 0.00089377 & 4.39 & 5.11742e-05 & 4.13 & 3.12506e-06 & 4.03  \\
%\cline{3-10}
% & & $|u-u_h|_{H^1}$   & 0.0668036 & 0.00720006 & 3.21 & 0.000860617 & 3.06 & 0.000106343 & 3.02  \\
\cline{3-10}
 & & $\|u-u_h\|_{L^\infty}$   & 0.00407847 & 0.000364257 & 3.48 & 2.42812e-05 & 3.91 & 1.54065e-06 & 3.98  \\
\hline
\end{tabular}\label{tab2dtk1uxx2}
\end{table}

\begin{table}[!htbp]\tabcolsep0.03in
\caption{2D $L^2, \; L^\infty$ errors for linearized Cahn-Hillard equation at $T = 0.1$, $a=\sqrt{3}/2$.}
\begin{tabular}[c]{||c|c|c|c|c|c|c|c|c|c||}
\hline
\multirow{2}{*}{$k$} & \multirow{2}{*}{$\Delta t$}&   \multirow{2}{*}{ } & N=8 & \multicolumn{2}{|c|}{N=16} & \multicolumn{2}{|c|}{N=32} & \multicolumn{2}{|c||}{N=64}  \\
\cline{4-10}
& & & error & error & order & error & order & error & order\\
%\hline
%\multirow{3}{*}{0}  & \multirow{3}{*}{1e-3} & $\|u-u_h\|_{L^2}$ &  1.08352 & 0.595592 & 0.86 & 0.375278 & 0.67 & 0.294579 & 0.35  \\
%\cline{3-10}
% & & $|u-u_h|_{H^1}$  & 4.12186 & 4.12186 & 0.00 & 4.12186 & 0.00 & 4.12186 & 0.00  \\
%\cline{3-10}
% & & $\|u-u_h\|_{L^\infty}$  & 0.346698 & 0.218631 & 0.67 & 0.134091 & 0.71 & 0.0949554 & 0.50  \\
%\hline
\hline
\multirow{2}{*}{1}  & \multirow{2}{*}{1e-3} & $\|u-u_h\|_{L^2}$ &  0.215662 & 0.0365488 & 2.56 & 0.00797165 & 2.20 & 0.0018959 & 2.07  \\
%\cline{3-10}
% & & $|u-u_h|_{H^1}$  & 1.1107 & 0.488877 & 1.18 & 0.236301 & 1.05 & 0.116837 & 1.02  \\
\cline{3-10}
 & & $\|u-u_h\|_{L^\infty}$  & 0.100838 & 0.0217418 & 2.21 & 0.00517092 & 2.07 & 0.00126682 & 2.03  \\
\hline
\hline
\multirow{2}{*}{2}  & \multirow{2}{*}{1e-4} & $\|u-u_h\|_{L^2}$ & 0.0476107 & 0.00759121 & 2.65 & 0.00102002 & 2.90 & 0.000129942 & 2.97  \\
%\cline{3-10}
% & & $|u-u_h|_{H^1}$  & 0.1914 & 0.0548671 & 1.80 & 0.0142959 & 1.94 & 0.00361414 & 1.98  \\
\cline{3-10}
 & & $\|u-u_h\|_{L^\infty}$  & 0.0147802 & 0.00225339 & 2.71 & 0.000294436 & 2.94 & 3.70339e-05 & 2.99  \\
 \hline
\hline
\multirow{2}{*}{$k$} & \multirow{2}{*}{$\Delta t$}&   \multirow{2}{*}{ } & N=4 & \multicolumn{2}{|c|}{N=8} & \multicolumn{2}{|c|}{N=16} & \multicolumn{2}{|c||}{N=32}  \\
\cline{4-10}
& & & error & error & order & error & order & error & order\\
\hline
\multirow{2}{*}{3} & \multirow{2}{*}{1e-5}  & $\|u-u_h\|_{L^2}$ & 0.0144092 & 0.000677035 & 4.41 & 3.87644e-05 & 4.13 & 2.36723e-06 & 4.03  \\
%\cline{3-10}
% & & $|u-u_h|_{H^1}$   & 0.0625901 & 0.00668012 & 3.23 & 0.000798434 & 3.06 & 9.86589e-05 & 3.02  \\
\cline{3-10}
 & & $\|u-u_h\|_{L^\infty}$   & 0.00388857 & 0.000338347 & 3.52 & 2.25334e-05 & 3.91 & 1.42943e-06 & 3.98  \\
\hline
\end{tabular}\label{tab2dtk1uxx3}
\end{table}

\end{example}

\begin{example}\label{Ex1dbd1} (Dirichlet boundary condition of the first  kind) We consider the following initial-boundary value problem
\begin{equation*}%\label{Ex1Biharmonicbd1}
\left \{
\begin{array}{rl}
u_t & =-u_{xxxx},\; (x, t)\in [0, 2\pi]\times (0, T],\\
u(x, 0) &=\sin x, \\
    u(0, t) & =  u(2\pi,t)=0,\\
    u_x(0,t) & = u_x(2\pi,t)=e^{-t},
\end{array}
\right.
\end{equation*}
which admits the exact solution $u(x,t) = e^{-t} \sin(x)$.
%\bq
%    u(x,t) = e^{-t} \sin(x).
%\eq

We test this  example using DG scheme (\ref{FPDGCell}) with boundary fluxes (\ref{fluxBD1}). We pay special attention on the effects of the boundary flux parameter $\beta_1$. The comparison results in Figure \ref{1dbeta1}
%Table \ref{tab1dtk1bd1} and \ref{tab1dtk1bd12}
show that the DG scheme with $\beta_1>0$ is optimally convergent, yet the scheme with $\beta_1=0$ only gives suboptimal orders of convergence for polynomials of degree $k$ with $k \geq 2$.
%; while the choices of $\beta_1$ are given in Table \ref{beta1dbd1}.
This test suggests that $\beta_1$ is necessary for $k\geq 2$  to weakly enforce the Dirichlet boundary data as formulated in (\ref{fluxBD1}), and $\beta_1=0$ is admissible for $k=1$.
Here, the convergence orders shown in Figure \ref{1dbeta1} are obtained based on total cell numbers $N=40, \ 80$ for $k \leq 2$ and $N=20, \ 40$ for $k \geq 3$.
%We can see that $\beta_1=0$ is admissible.
%the boundary fluxes (\ref{fluxBD}).
%This implies that $\beta_1$ is necessary in the boundary fluxes (\ref{fluxBD}).
%n this test, we want to show the necessity of the boundary fluxes parameter $\beta_1$ by comparing the errors and orders of accuracy at $t=1$ for $\beta_1=0$ and $\beta_1>0$. Here $\beta_1>0$ are chosen following Table \ref{beta1dbd1}.

 \begin{figure}
 \centering
 \subfigure{\includegraphics[width=0.49\textwidth]{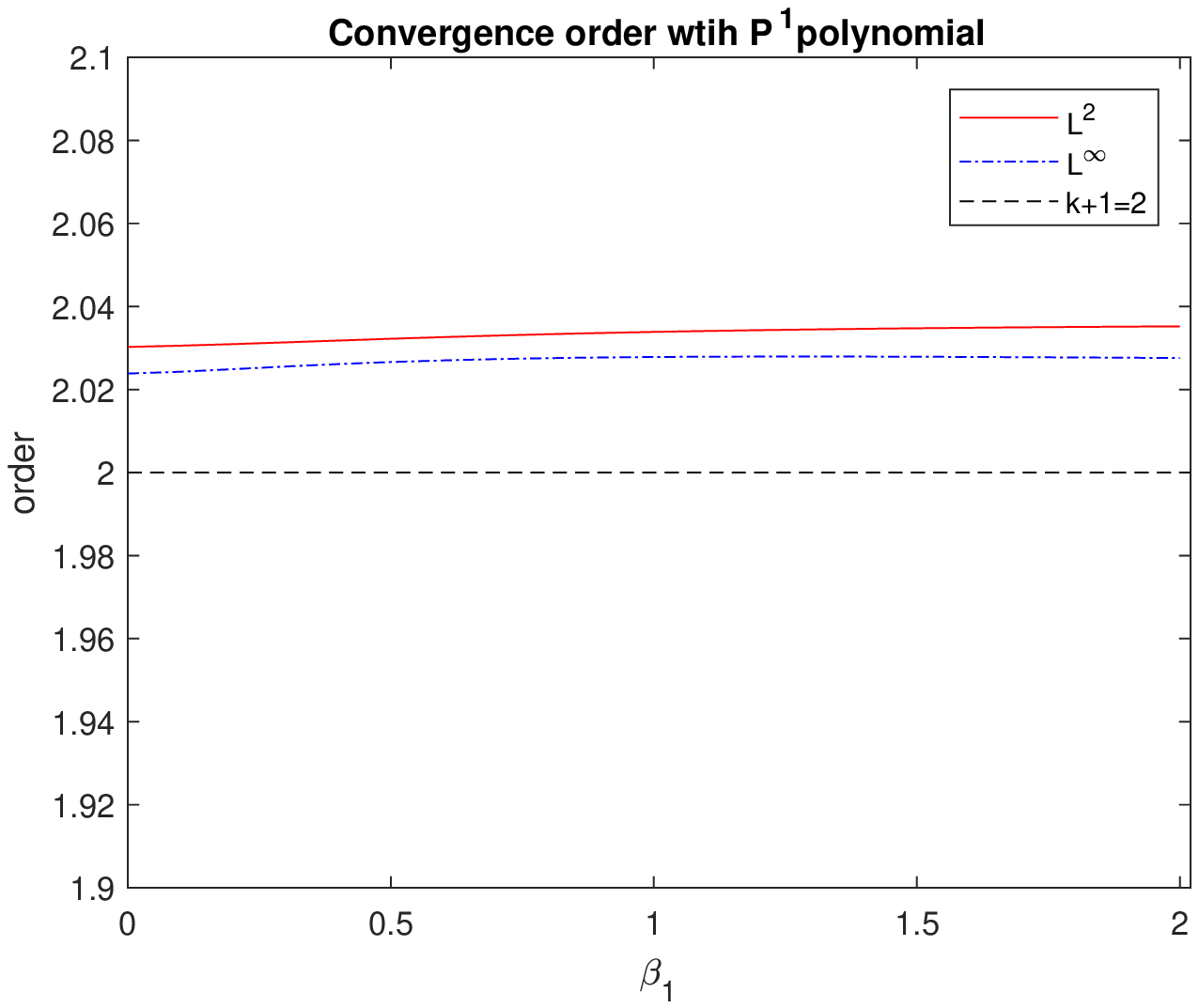}}
 \subfigure{\includegraphics[width=0.49\textwidth]{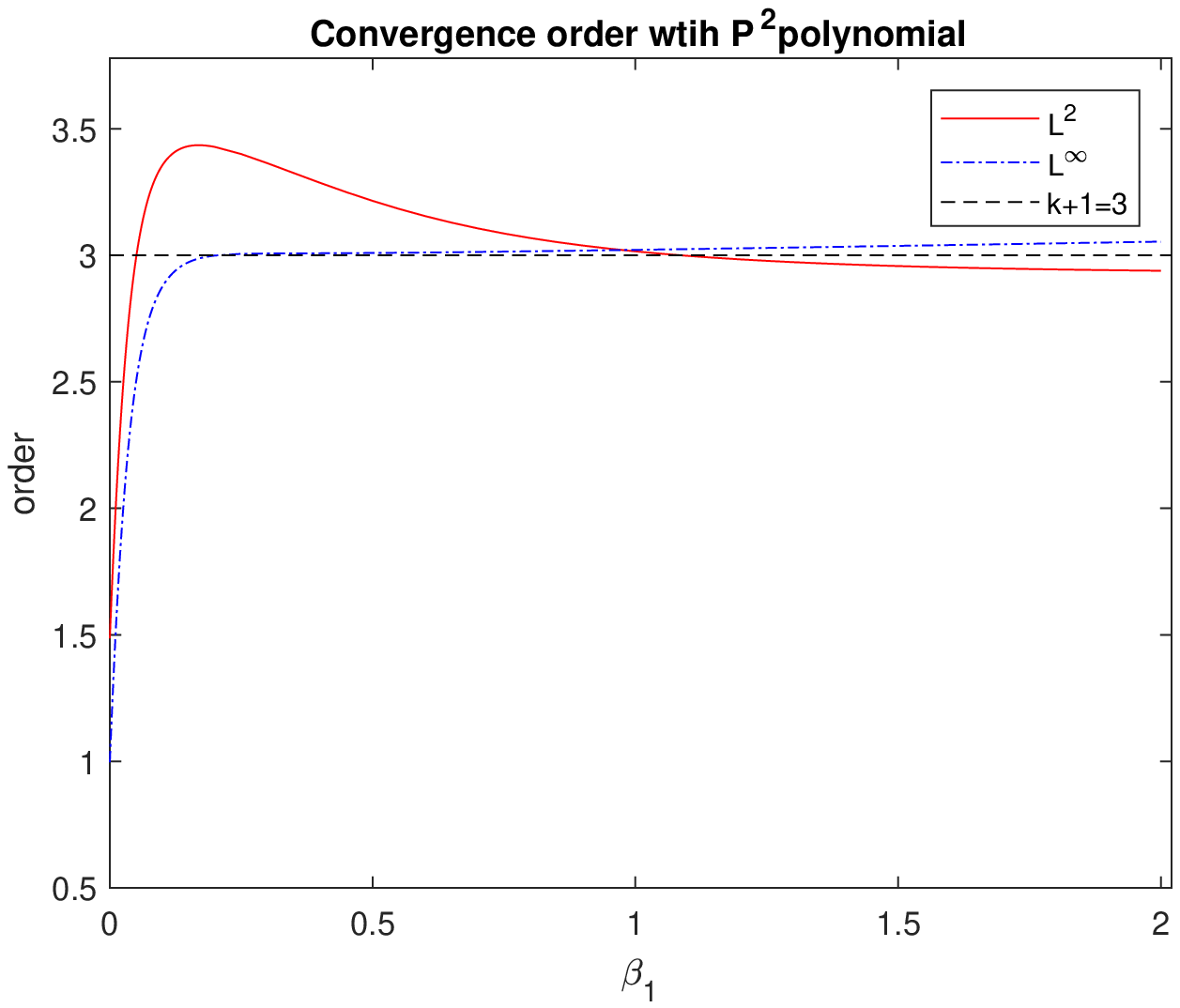}}
 \subfigure{\includegraphics[width=0.49\textwidth]{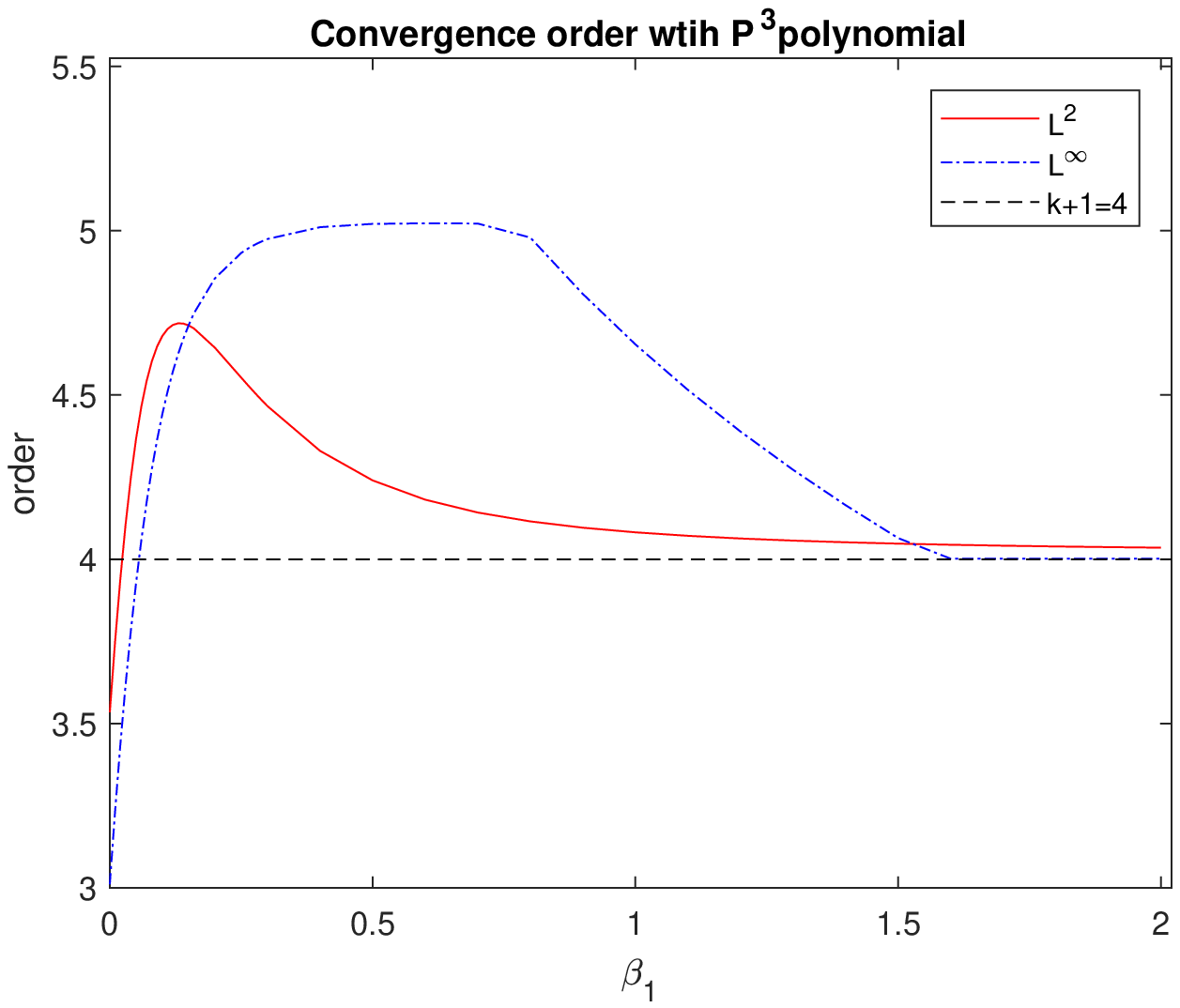}}
 \subfigure{\includegraphics[width=0.49\textwidth]{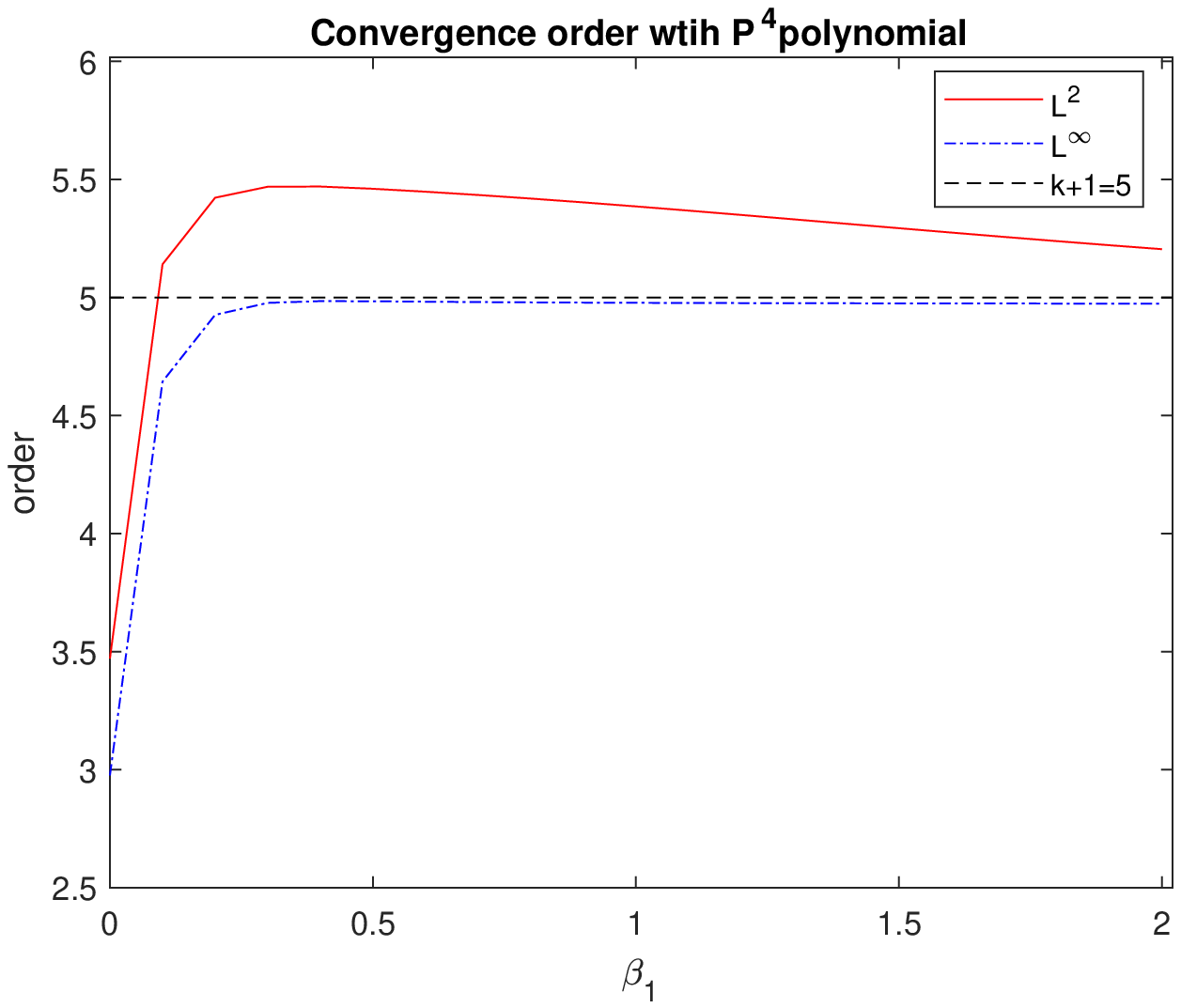}}
 \caption{The convergence orders with $P^k$ polynomials at $T=0.1$, Example 5.4.}% in 1D with BC of $1$st kind.}
 \label{1dbeta1}
 \end{figure}

\end{example}

\begin{example}\label{Ex1dbd2} (Dirichlet boundary condition of the second kind) We consider the following initial-boundary value problem
\begin{equation*}\label{Ex1Biharmonicbd2}
\left \{
\begin{array}{rl}
u_t & =-u_{xxxx},\; (x, t)\in [0, 3\pi]\times (0, T],\\
u(x, 0) &=\sin x, \\
    u(0, t) & =  u(3\pi,t)=0,\\
    u_{xx}(0,t) & = u_{xx}(3\pi,t)=0.
\end{array}
\right.
\end{equation*}
%The exact solution is given by
%\bq
%    u(x,t) = e^{-t} \sin(x).
%\eq
We test this example using DG scheme (\ref{FPDGCell}) with boundary fluxes (\ref{fluxBD2}),  with emphasis on the effects of the boundary flux parameters $\beta_0$. The numerical results are reported in Table \ref{tab1dtk1bd2}-\ref{tab1dtk1bd22} and Figure \ref{1dbeta0}.
%In Table \ref{tab1dtk1bd20} we see that the DG scheme with $\beta_0=0$ and  polynomials of degree one gives only suboptimal convergence no matter how we vary another parameter $\beta_1$.
In Table \ref{tab1dtk1bd2} we test the DG scheme based on $P^1$ polynomials, and we observe that the DG scheme with $\beta_0=0$ only gives suboptimal order of accuracy, while the DG scheme with other values of $\beta_0$ give optimal order of convergence in both $L^2$ and $L^\infty$ norms.
The comparison results in Table \ref{tab1dtk1bd2} show that $\beta_0$ is necessary for $k=1$ to  weakly enforce the Dirichlet boundary data as formulated in (\ref{fluxBD2}). Convergence orders in Figure \ref{1dbeta0},  obtained based on $P^1$ polynomials and total cell numbers $N=40, \ 80$, indicate that $|\beta_0| \geq C$ for some constants $C$ (e.g. $C=3$) is sufficient for the DG scheme to be optimally convergent.
%More exploration about the influence of $\beta_0$ to the convergence of the DG scheme is shown in Figure \ref{1dbeta0}.  These suggest the necessity of $\beta_0$ in the boundary fluxes (\ref{fluxBD2}).
However, extensive numerical tests indicate that $\beta_0=0$ is sufficient for the DG scheme with $k\geq 2$ to be optimally convergent,
%based on polynomials of degree $k$ with $k \geq 2$ to obtain $(k+1)$th order of accuracy,
see Table \ref{tab1dtk1bd22}. % and Figure \ref{1dbeta0}.
%Table \ref{tab1dtk1bd2} and \ref{tab1dtk1bd22} imply that $\beta_1$ is not necessary in obtaining the optimal errors in the boundary fluxes (\ref{fluxBD2}) if we follow the choice in Table \ref{tabbeta}, even though $\beta_1$ can improve the convergence order for some special choice of $\beta_0 >0$, see Figure \ref{1db0b1}.

 \begin{figure}
 \centering
 \subfigure{\includegraphics[width=0.49\textwidth]{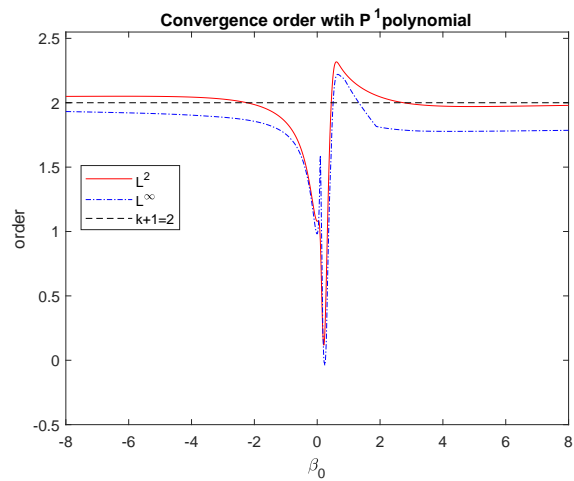}}
 %\subfigure{\includegraphics[width=0.49\textwidth]{figure/beta0p2.eps}}
 %\subfigure{\includegraphics[width=0.49\textwidth]{figure/beta0p3.eps}}
 %\subfigure{\includegraphics[width=0.49\textwidth]{figure/beta0p4.eps}}
 \caption{The convergence order with $P^1$ polynomials at $T=0.1$, Example 5.5.}% in 1D with BC of $2$nd kind.}
 \label{1dbeta0}
 \end{figure}

\begin{table}[!htbp]\tabcolsep0.03in
\caption{1D $L^2, \; L^\infty$ errors at $T= 1$ based on $P^1$ polynomials.}
\begin{tabular}[c]{||c|c|c|c|c|c|c|c|c|c||}
\hline
\multirow{2}{*}{$\beta_0$} & \multirow{2}{*}{$\Delta t$}&   \multirow{2}{*}{ } & N=10 & \multicolumn{2}{|c|}{N=20} & \multicolumn{2}{|c|}{N=40} & \multicolumn{2}{|c||}{N=80}  \\
\cline{4-10}
& & & error & error & order & error & order & error & order\\
\hline
\multirow{2}{*}{0.0}  & \multirow{2}{*}{1e-3} & $\|u-u_h\|_{L^2}$ &  0.0657588 & 0.0254149 & 1.37 & 0.0117346 & 1.11 & 0.00574456 & 1.03  \\
\cline{3-10}
%& & $|u-u_h|_{H^1}$  & 0.166228 & 0.0808126 & 1.04 & 0.040189 & 1.01 & 0.0200692 & 1.00  \\
%\cline{3-10}
& & $\|u-u_h\|_{L^\infty}$  & 0.0599114 & 0.0319409 & 0.91 & 0.0162757 & 0.97 & 0.00818006 & 0.99  \\
\hline
\hline
\multirow{2}{*}{0.4} & \multirow{2}{*}{1e-3}  & $\|u-u_h\|_{L^2}$ & 0.0766309 & 0.0587613 & 0.38 & 0.020537 & 1.52 & 0.0039176 & 2.39  \\
%\cline{3-10}
% & & $|u-u_h|_{H^1}$   & 0.162407 & 0.0762102 & 1.09 & 0.0384337 & 0.99 & 0.0177134 & 1.12  \\
\cline{3-10}
 & & $\|u-u_h\|_{L^\infty}$   & 0.0693423 & 0.0522882 & 0.41 & 0.0252481 & 1.05 & 0.00507969 & 2.31  \\
\hline
\hline
\multirow{2}{*}{1.0} & \multirow{2}{*}{1e-3} &  $\|u-u_h\|_{L^2}$ & 0.125475 & 0.0291785 & 2.10 & 0.00598832 & 2.28 & 0.00136537 & 2.13  \\
%\cline{3-10}
% & & $|u-u_h|_{H^1}$   &  0.168671 & 0.0745983 & 1.18 & 0.0352448 & 1.08 & 0.0174038 & 1.02  \\
\cline{3-10}
 & & $\|u-u_h\|_{L^\infty}$   &  0.133107 & 0.0370655 & 1.84 & 0.00778254 & 2.25 & 0.00196423 & 1.99  \\
\hline
\hline
\multirow{2}{*}{4.0} & \multirow{2}{*}{1e-3} &  $\|u-u_h\|_{L^2}$ & 0.0319942 & 0.00763239 & 2.07 & 0.00192231 & 1.99 & 0.000485668 & 1.98  \\
%\cline{3-10}
% & & $|u-u_h|_{H^1}$   &  0.148422 & 0.0706081 & 1.07 & 0.0348339 & 1.02 & 0.0173577 & 1.00  \\
\cline{3-10}
 & & $\|u-u_h\|_{L^\infty}$   &  0.0312461 & 0.00972619 & 1.68 & 0.00284823 & 1.77 & 0.000763535 & 1.90  \\
\hline
\hline
\multirow{2}{*}{-1.0}  & \multirow{2}{*}{1e-3} & $\|u-u_h\|_{L^2}$ &  0.0556805 & 0.0154405 & 1.85 & 0.00431204 & 1.84 & 0.00115667 & 1.90  \\
%\cline{3-10}
% & & $|u-u_h|_{H^1}$  & 0.155157 & 0.0720928 & 1.11 & 0.0350834 & 1.04 & 0.0173932 & 1.01  \\
\cline{3-10}
 & & $\|u-u_h\|_{L^\infty}$  & 0.0605392 & 0.0215281 & 1.49 & 0.00658334 & 1.71 & 0.00182776 & 1.85  \\
\hline
\end{tabular}\label{tab1dtk1bd2}
\end{table}

\begin{table}[!htbp]\tabcolsep0.03in
\caption{1D $L^2, \; L^\infty$ errors at $T = 1$ with $k \geq 2$ and $\beta_0=0$.}
\begin{tabular}[c]{||c|c|c|c|c|c|c|c|c|c||}
\hline
\multirow{2}{*}{$k$} & \multirow{2}{*}{$\Delta t$}&   \multirow{2}{*}{ } & N=10 & \multicolumn{2}{|c|}{N=20} & \multicolumn{2}{|c|}{N=40} & \multicolumn{2}{|c||}{N=80}  \\
\cline{4-10}
& & & error & error & order & error & order & error & order\\
\hline
\multirow{2}{*}{2}  & \multirow{2}{*}{1e-3} & $\|u-u_h\|_{L^2}$ & 0.00552409 & 0.00074947 & 2.88 & 9.59851e-05 & 2.96 & 1.20144e-05 & 3.00  \\
%\cline{3-10}
% & & $|u-u_h|_{H^1}$  & 0.0191142 & 0.00505481 & 1.92 & 0.00128452 & 1.98 & 0.000320981 & 2.00  \\
\cline{3-10}
 & & $\|u-u_h\|_{L^\infty}$  & 0.00354416 & 0.000492849 & 2.85 & 6.34389e-05 & 2.96 & 7.93492e-06 & 3.00  \\
\hline
\hline
\multirow{2}{*}{$k$} & \multirow{2}{*}{$\Delta t$}&   \multirow{2}{*}{ } & N=5 & \multicolumn{2}{|c|}{N=10} & \multicolumn{2}{|c|}{N=20} & \multicolumn{2}{|c||}{N=40}  \\
\cline{4-10}
& & & error & error & order & error & order & error & order\\
\hline
\multirow{2}{*}{3} & \multirow{2}{*}{1e-4}  & $\|u-u_h\|_{L^2}$ & 0.000793078 & 4.06207e-05 & 4.29 & 2.29901e-06 & 4.14 & 1.36679e-07 & 4.07  \\
%\cline{3-10}
% & & $|u-u_h|_{H^1}$   & 0.00328862 & 0.000369281 & 3.15 & 4.39214e-05 & 3.07 & 5.36395e-06 & 3.03  \\
\cline{3-10}
 & & $\|u-u_h\|_{L^\infty}$   & 0.000711528 & 4.52923e-05 & 3.97 & 2.84052e-06 & 4.00 & 1.75755e-07 & 4.01  \\
\hline
\hline
\multirow{2}{*}{4} & \multirow{2}{*}{1e-4} &  $\|u-u_h\|_{L^2}$ & 4.44683e-05 & 1.5007e-06 & 4.89 & 4.8107e-08 & 4.96 & 1.51427e-09 & 4.99  \\
%\cline{3-10}
% & & $|u-u_h|_{H^1}$   &   0.000233946 & 1.5414e-05 & 3.92 & 9.79734e-07 & 3.98 & 6.15172e-08 & 3.99  \\
\cline{3-10}
 & & $\|u-u_h\|_{L^\infty}$   &  2.76165e-05 & 1.01708e-06 & 4.76 & 3.33058e-08 & 4.93 & 1.05342e-09 & 4.98  \\
\hline
\end{tabular}\label{tab1dtk1bd22}
\end{table}

% \begin{figure}
% \centering
%% \subfigure{\includegraphics[width=0.49\textwidth]{figure/beta0beta1p1.eps}}
% \subfigure{\includegraphics[width=0.49\textwidth]{figure/beta0beta1p2.eps}}
% \subfigure{\includegraphics[width=0.49\textwidth]{figure/beta0beta1p3.eps}}
% \subfigure{\includegraphics[width=0.49\textwidth]{figure/beta0beta1p4.eps}}
% \caption{The convergence order with $P^k$ polynomials at $t=0.1$ in 1D with BC of $2$nd kind.}
% \label{1db0b1}
% \end{figure}

\end{example}

%\begin{example}\label{Ex2dbd1} (Dirichlet boundary condition of the first kind in 2D)
%\begin{equation}\label{Ex1Bi2Dbd1}
%\left \{
%\begin{array}{rl}
%    u_t + \Delta^2 u = & 0 \quad x \in [0, 4\pi]\times [0, 4\pi]\times (0, T],\\
%    u(x,y,0) = &  \sin(0.5x)\sin(0.5y),\\
%\end{array}
%\right.
%\end{equation}
% \begin{figure}
% \centering
% \subfigure{\includegraphics[width=0.49\textwidth]{figure/2dbeta1p1.eps}}
% \subfigure{\includegraphics[width=0.49\textwidth]{figure/2dbeta1p2.eps}}
% \subfigure{\includegraphics[width=0.49\textwidth]{figure/2dbeta1p3.eps}}
% \caption{The convergence order with $Q^k$ polynomials at $T=0.01$ in 2D with BC of $1$st kind.}
% \label{2dbeta1}
% \end{figure}
%
%\end{example}
%
%
%\begin{example}\label{Ex2dbd2} (Dirichlet boundary condition of the second kind in 2D)
%
% \begin{figure}
% \centering
% \subfigure{\includegraphics[width=0.49\textwidth]{figure/2dbeta0p1.eps}}
% \subfigure{\includegraphics[width=0.49\textwidth]{figure/2dbeta0p2.eps}}
% \subfigure{\includegraphics[width=0.49\textwidth]{figure/2dbeta0p3.eps}}
% \caption{ The convergence order with $Q^1$ polynomials at $T=0.01$ in 2D with BC of $2$nd kind. }
% \label{2dbeta02nd}
% \end{figure}
%
%\end{example}

%\begin{equation}\label{Ex1Biharmonicnon}
%\left \{
%\begin{array}{rl}
%    u_t = & -(\partial_x^2+1)^2u -f(u)   \quad x \in [0, 8\pi],\\
%    u(x,0) = &  u_0(x),\\
%    u(0,t) =& u(8\pi,t) = 0,\\
%    u_{xx}(0,t) =& u_{xx}(8\pi,t) = 0,\\
%\end{array}
%\right.
%\end{equation}
%where
%\bq
%f(u) = - 0.5u + u^3, \\
%\eq
\begin{example}\label{Ex1dNonPtn} (Pattern selection) For one-dimensional
Swift-Hohenberg equation, we consider the following problem
\begin{equation*}%\label{SHex}
\left \{
\begin{array}{rl}
% & u_t=-(\partial_x^2+1)^2u+f(u) \quad x\in [0, 8\pi], \\
 & u_t=-u -2u_{xx} -u_{xxxx}+f(u) \quad (x,t) \in [0, L] \times (0,T], \\
& u=0 \; \text{and} \; u_{xx}=0 \quad \text{at} \; x=0, L, \; t>0,\\
% & u(0,t)=  u(L,t)=0, \\
 %& u_{xx}(0,t)= u_{xx}(L,t)=0,\\
 & u(x,0)=0.1\sin\left(\frac{\pi x}{L} \right),
\end{array}
\right.
\end{equation*}
where $f(u) = \epsilon u - u^3$.  The asymptotic solution behavior of this problem was studied in \cite{PR04} with particular focus on the role of the parameter $\epsilon$ and the length $L$ of the domain on the selection of the limiting profile.
%$$
%f(u)=-\frac{d\Phi(u)}{du} = 0.5 u-u^3, \quad \Phi(u) = -\frac{1}{4}u^2+\frac{1}{4}u^4.
%$$
%In this test case, we take $[a,b]=[0,8\pi]$, $D=\kappa=1$, $\varepsilon=0.5, g=0$ and the initial $u_0(x)=2\sin(x/8)$.
We test the case of $\epsilon=0.5$ with $L=4, 14$, respectively,  and compare the results with those obtained in   \cite{PR04}.  This problem is solved by  DG scheme (\ref{FPDGFull1DNon}) based on polynomial $P^2$ with  $\delta=10^{-12}$. %We test this example until final time $T$ with mesh partition number $N$ and polynomial $P^2$,
The numerical solutions  shown in Figure \ref{1du12sol} display the pattern dynamics, which is consistent with the analysis and numerical tests in \cite{PR04}. The corresponding free energy dissipation  is shown in Figure \ref{1du12eng}.

 \begin{figure}
 \centering
 \subfigure[]{\includegraphics[width=0.49\textwidth]{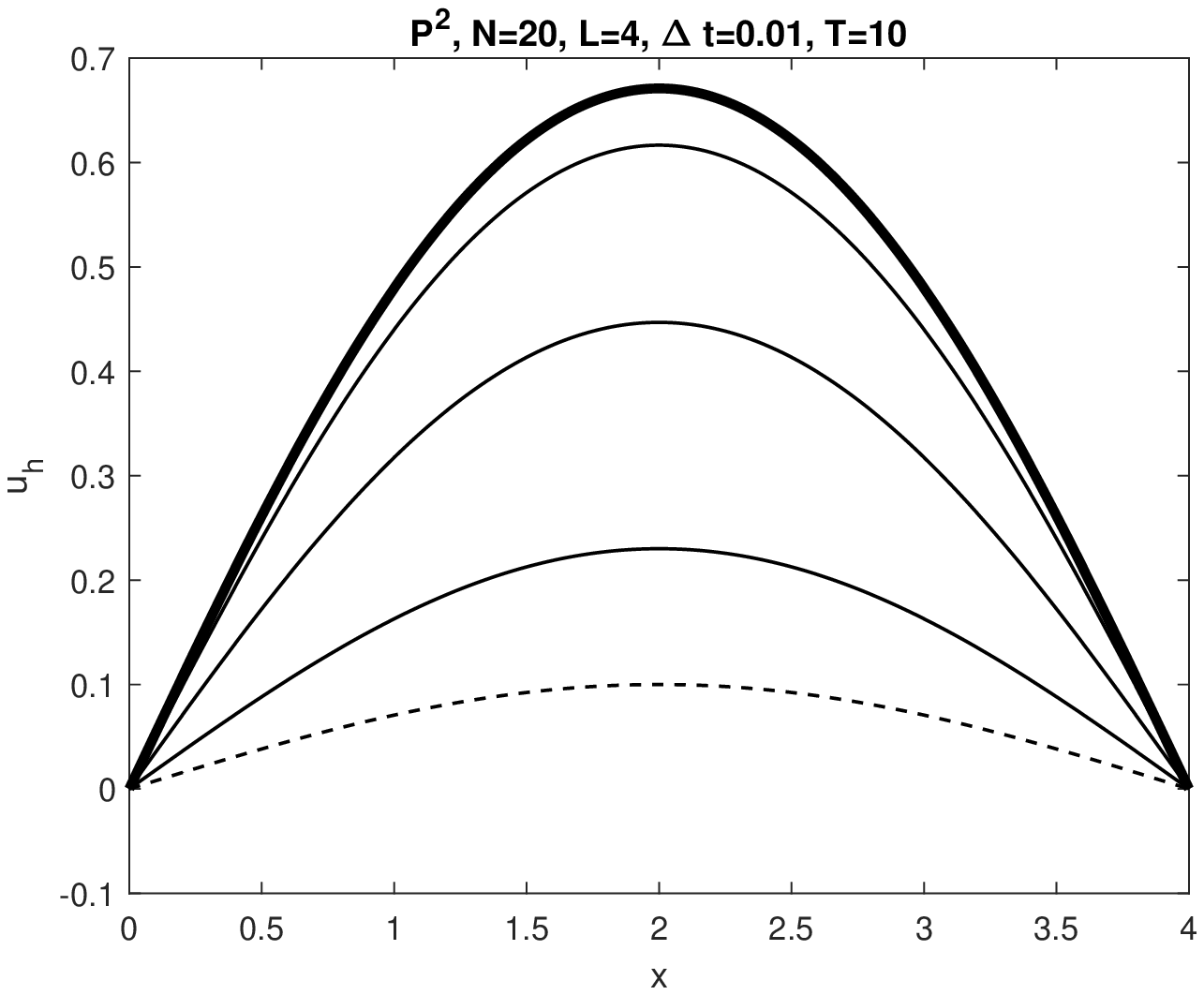}}
 \subfigure[]{\includegraphics[width=0.49\textwidth]{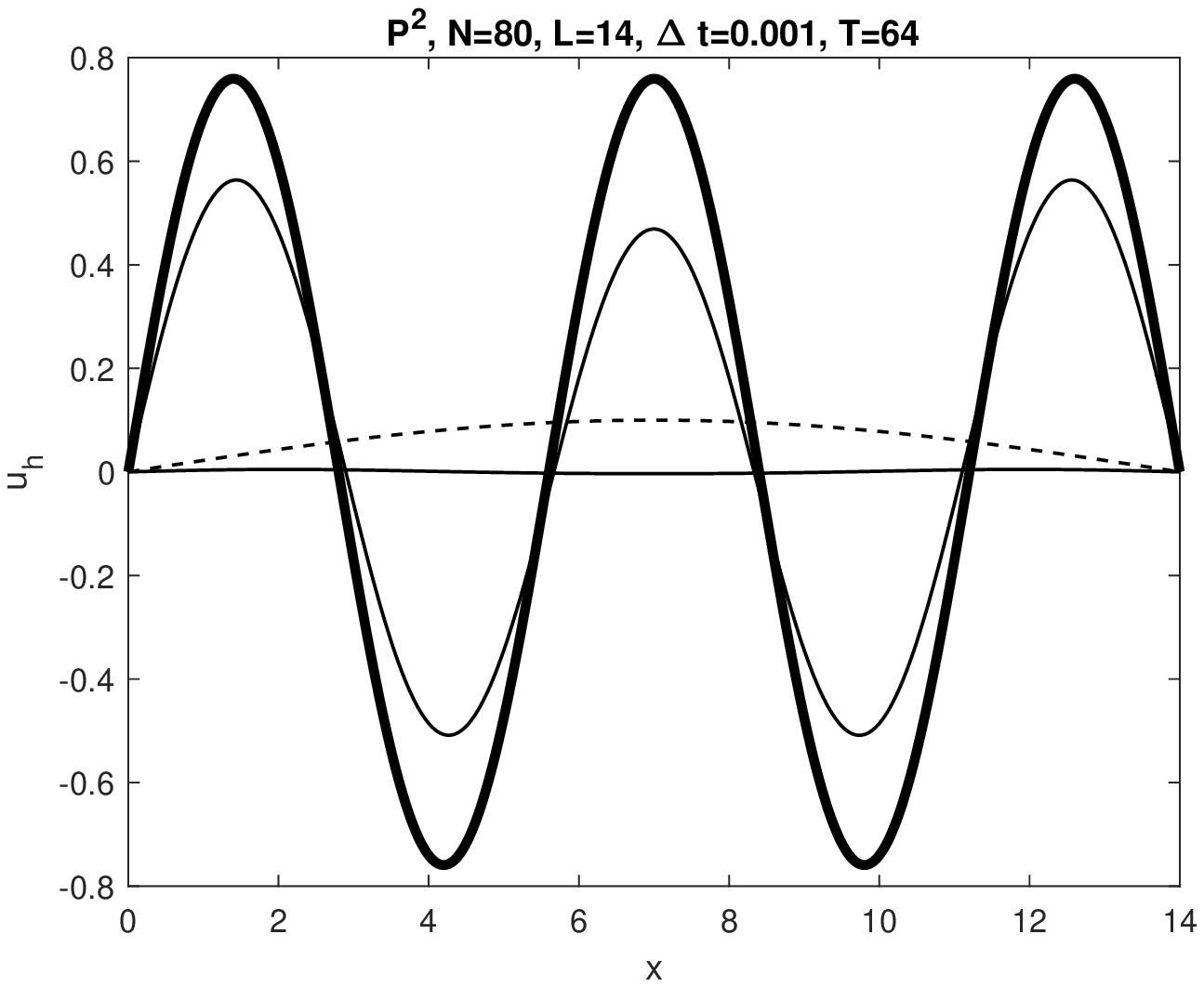}}
  \caption{ Evolution of patterns with (a) $L=4.0$, (b) $L=14.0$. The dashed curve is the initial pattern and the thick curve the final pattern. The other curves represent patterns at the intermediate times.
  } \label{1du12sol}
 \end{figure}

 \begin{figure}
 \centering
 \subfigure[]{\includegraphics[width=0.49\textwidth]{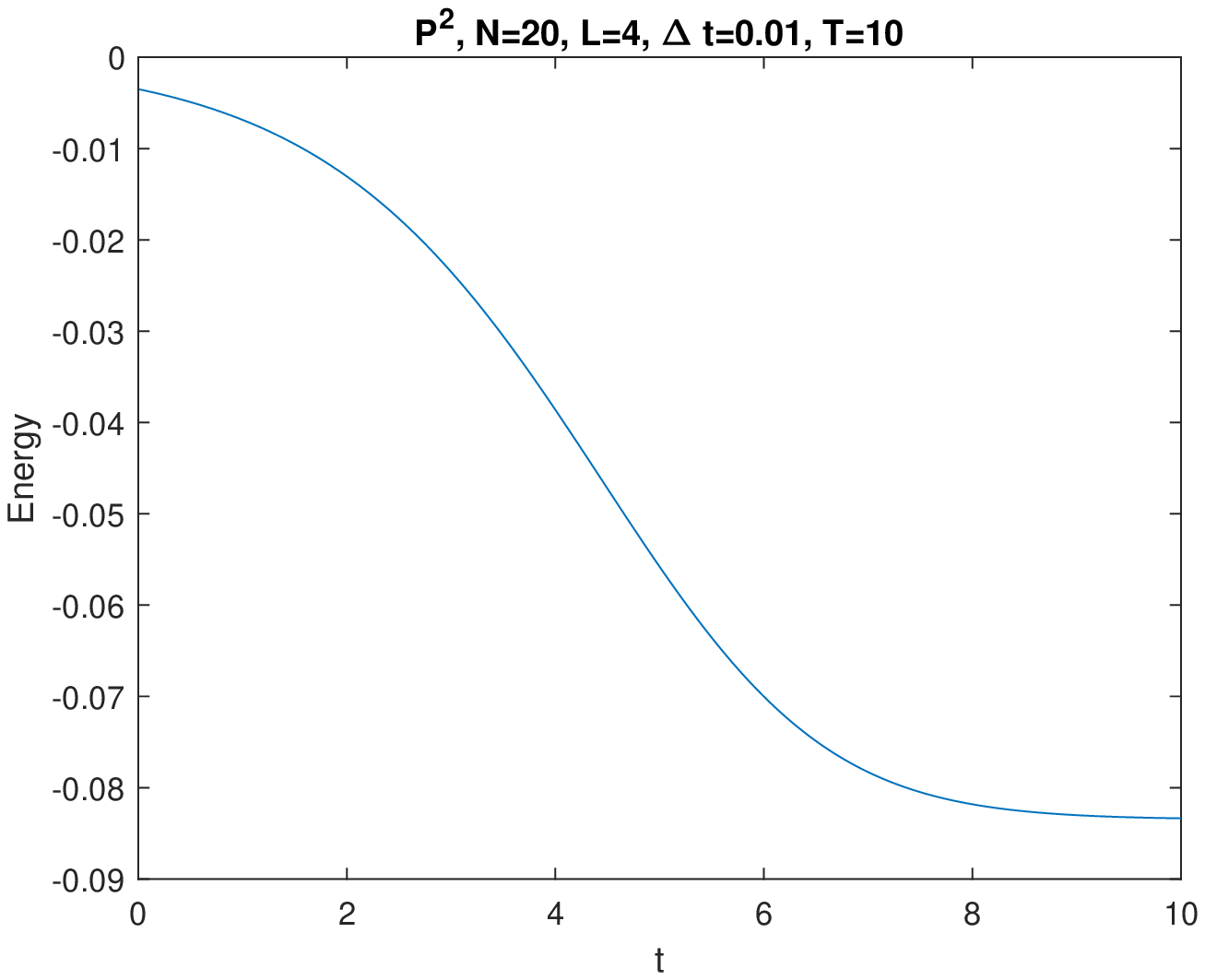}}
 \subfigure[]{\includegraphics[width=0.49\textwidth]{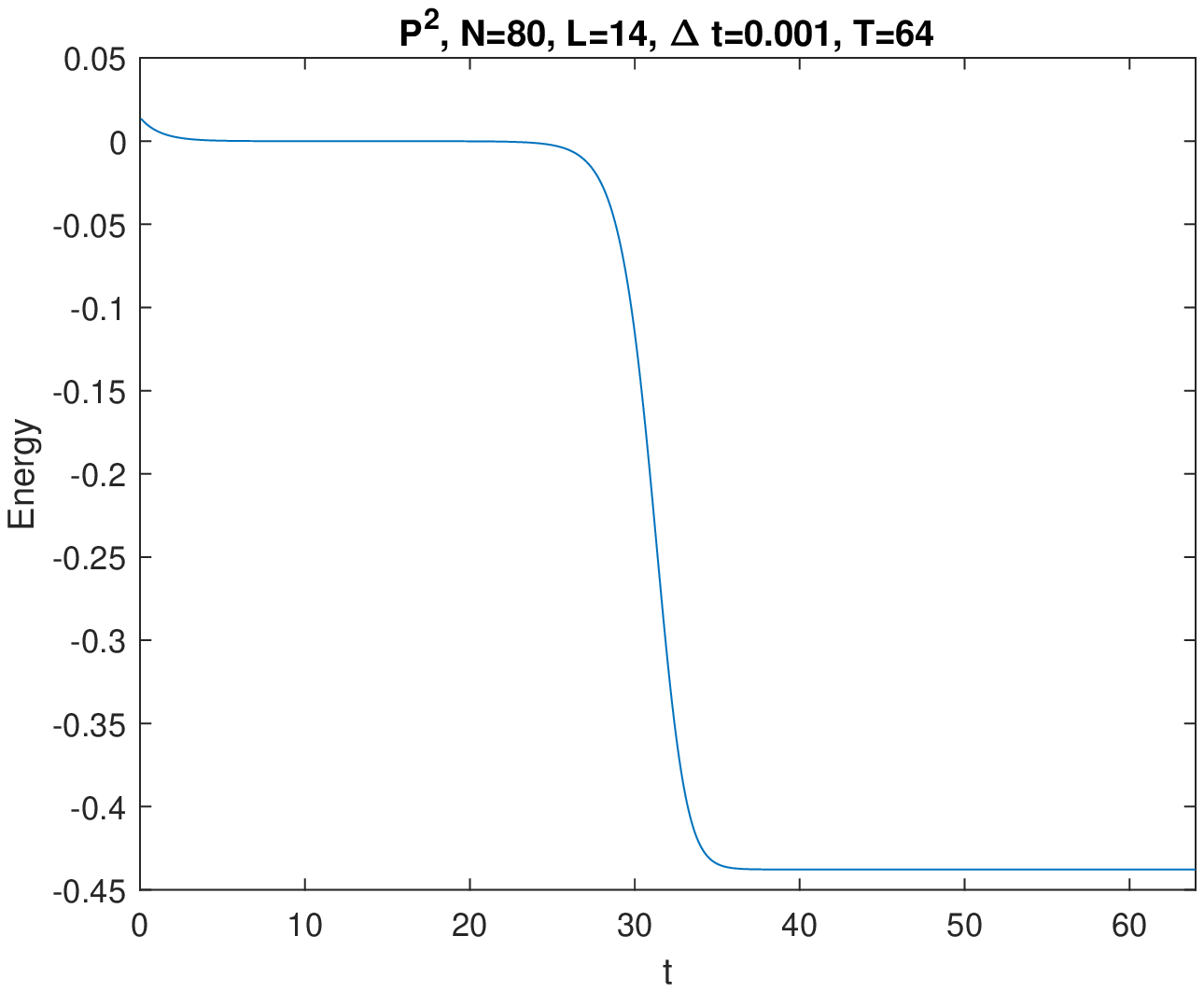}}
 \caption{ Energy evolution with (a) L=4.0, (b) $L=14.0$.
 } \label{1du12eng}
 \end{figure}

%\noindent\textbf{Test case 2.} We take the initial $U_0(x)=|\sin(8x)|$ in this case, and we tested this example until $t=20.0$ with polynomial $P^3$ and $N=20$,
%the PFDG solutions at $t=0, \ 5.0, 10.0, \ 15.0, \ 20.0$ are shown in Figure \ref{1du12sol}, and the corresponding free energy is shown in Figure \ref{1du12eng}. From Figure \ref{1du12sol}, we can find that the steady state pattern has formed at $t=15$.

%\begin{rem}
%In this example, we only considered one-dimensional Swift-Hohenberg equation, we will explore more nonlinear 4th order PDEs in our following work.
%\end{rem}

\end{example}

\section{Concluding remarks}
A novel discontinuous Galerkin (DG) method without interior penalty has been proposed to solve the time-dependent fourth order partial differential equations. For the biharmonic equation, the DG scheme is based on the mixed formulation of the original model.
%By rewriting the equation into a second order system, it spatially discretes the weak formulation of the system by DG scheme without penalty terms and takes a list of time discrezation for time matching.
Both stability and optimal $L^2-$error estimates of the DG method are proved in both one-dimensional and multi-dimensional settings subject to periodic boundary conditions. Extensions to general fourth order equations and cases with three typical non-homogeneous boundary conditions are discussed, following by an application to solving the one-dimensional Swift-Hohenberg equation, which admits a decay free energy.
Several numerical results are presented to verify the stability and accuracy of the schemes.

 \bigskip
\section*{Acknowledgments} This research was partially supported by the National Science Foundation under Grant DMS1312636.

\bigskip

\end{document}